\numberwithin{equation}{section}
\newtheorem{thm}{Theorem}[section]
\newtheorem{lemma}[thm]{Lemma}
\newtheorem{prop}[thm]{Proposition}
\newtheorem{cor}[thm]{Corollary}
{\theorembodyfont{\rmfamily}

\newtheorem{rmk}[thm]{Remark}
}
\newcommand{\qed}{\hfill \mbox{\raggedright \rule{.07in}{.1in}}}
\newenvironment{proof}{\vspace{1ex}\noindent{\bf
Proof}\hspace{0.5em}}{\hfill\qed\vspace{1ex}}
\newenvironment{pfof}[1]{\vspace{1ex}\noindent{\bf Proof of
#1}\hspace{0.5em}}{\hfill\qed\vspace{1ex}}
\newcommand{\R}{{\mathbb R}}
\newcommand{\C}{{\mathbb C}}
\newcommand{\Z}{{\mathbb Z}}
\newcommand{\eps}{{\epsilon}}
\newcommand{\Leb}{\operatorname{Leb}}
 \newcommand{\diver}{\operatorname{div}}
 \newcommand{\Int}{\operatorname{Int}}
 \newcommand{\spec}{\operatorname{spec}}
 \newcommand{\type}{\operatorname{type}}
 \newcommand{\diam}{\operatorname{diam}}
 \newcommand{\range}{\operatorname{range}}
\renewcommand{\Re}{\operatorname{Re}}
\newcommand{\SMALL}{\textstyle}
\newcommand{\BIG}{\displaystyle}
\newcommand{\cH}{{\mathcal{H}}}
\newcommand{\cW}{{\mathcal{W}}}
\newcommand{\cF}{{\mathcal{F}}}
\newcommand{\cC}{{\mathcal{C}}}
\newcommand{\cU}{{\mathcal{U}}}
\newcommand{\fX}{{\mathfrak{X}}}
\title{Exponential decay of correlations for nonuniformly hyperbolic flows with a $C^{1+\alpha}$ stable foliation, including the classical Lorenz attractor}
 \author{
Vitor Ara\'ujo\thanks{Departamento de Matem\'atica, Universidade Federal da Bahia, Av.\ Ademar de Barros s/n, 40170-110 Salvador, Brazil} \and  Ian Melbourne\thanks{Institute of Mathematics, University of Warwick, Coventry CV4 7AL, UK}
 }
\date{14 September 2015; Updated 28 January 2016}
\begin{document}

\maketitle

 \begin{abstract}
We prove exponential decay of correlations for a class of $C^{1+\alpha}$ uniformly hyperbolic skew product flows, subject to a uniform nonintegrability condition.
In particular, this establishes exponential decay of correlations for an open set of geometric Lorenz attractors.  As a special case, we show that the classical Lorenz attractor is robustly exponentially mixing.
 \end{abstract}

 \section{Introduction} 
 \label{sec-intro}

Although there is by now an extensive literature on statistical properties 
for large classes of flows with a certain amount of hyperbolicity, the situation for exponential decay of correlations remains poorly understood.
Groundbreaking papers by Chernov and Dolgopyat~\cite{Chernov98,Dolgopyat98a} proved exponential decay
for certain Anosov flows, namely (i) geodesic flows on compact surfaces with negative curvature, and (ii) Anosov flows with $C^1$ stable and unstable foliations.  The method was extended by~\cite{Liverani04} to cover all contact Anosov flows (which includes geodesic flows on compact negatively curved manifolds of all dimensions).

Outside the situation where there is a contact structure,~\cite{Chernov98,Dolgopyat98a} relies heavily on the smoothness of both stable and unstable foliations, a situation which is pathological~\cite{HasselblattWiklinson99}: for Anosov flows, typically neither foliation is $C^1$.

Baladi \& Vall\'ee~\cite{BaladiVallee05} introduced a method, 
extended by~\cite{AGY06}, for proving exponential decay of correlations for flows when the stable foliation is $C^2$.
This is still pathological for Anosov flows.  However, for uniformly hyperbolic (Axiom~A) flows it can happen robustly that one (but not both) of the foliations is smooth.  
This formed the basis for the paper by Ara{\'u}jo \& Varandas~\cite{AraujoVarandas12} obtaining exponential decay of correlations for a nonempty open set of geometric Lorenz attractors with $C^2$ stable foliations.  Then~\cite{ABV} 
obtained exponential decay for a nonempty open set of Axiom~A flows with $C^2$ stable foliations.

In this paper, we point out how to relax the regularity condition in~\cite{BaladiVallee05} from $C^2$ to $C^{1+\alpha}$.  Combining this with the ideas from~\cite{AGY06},
we are able to prove exponential decay of correlations for flows with a $C^{1+\alpha}$ stable foliation satisfying a uniform nonintegrability condition (UNI).  
This improvement is particularly useful in the case of the classical Lorenz attractor where the stable foliation of the flow can be shown to be $C^{1+\alpha}$
(see~\cite[Lemma~2.2]{AMV15}), but it seems unlikely that the foliation is $C^2$.
Uniform nonintegrability was recently established for a convenient induced flow for such Lorenz attractors~\cite{AMV15}.  This includes the classical Lorenz attractor and also vector fields that are $C^1$ close.  Hence we obtain:

\begin{thm} \label{thm-Lorenz}   The classical Lorenz attractor is robustly exponentially mixing.
\end{thm}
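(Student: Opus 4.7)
The plan is to realize the classical Lorenz flow as a system fitting the abstract hypotheses of this paper's main theorem --- a $C^{1+\alpha}$ uniformly hyperbolic skew product flow satisfying the uniform nonintegrability condition (UNI) --- and then invoke that theorem for exponential decay of correlations. Robustness will be built into the verification of each hypothesis.

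First, I would set up the standard geometric Lorenz model: take a cross-section $\Sigma$ transverse to the flow to obtain a Poincar\'e return map $R:\Sigma\to\Sigma$ with a one-dimensional unstable direction and a uniformly contracting stable foliation. Quotienting along stable leaves yields a piecewise expanding map $\bar R$ on an interval $I$, and the Lorenz flow is modelled as a suspension of $R$ under the return time. Because the return time blows up near the stable manifold of the Lorenz singularity, as in~\cite{AGY06} I would pass to a first-return (induced) map on a compact subset $Y\subset I$ on which the induced map has uniformly bounded distortion, uniform expansion, and exponentially decaying return-time tails. The corresponding induced suspension flow is the $C^{1+\alpha}$ uniformly hyperbolic skew product to which the abstract theorem will be applied.

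Next, I would verify the two hypotheses of the abstract theorem for this induced model. The $C^{1+\alpha}$ regularity of the stable foliation of the flow --- and hence of the induced cross-section map --- is provided by~\cite[Lemma~2.2]{AMV15} for the classical Lorenz attractor and for vector fields $C^1$-close to it. The UNI condition for the convenient induced suspension flow has been established, in the same $C^1$ neighborhood, in~\cite{AMV15}. With both hypotheses in hand, the abstract theorem yields exponential decay of correlations for the induced flow, and a standard argument transferring mixing from an induced flow back to the original flow delivers exponential mixing of the Lorenz flow itself. Because both~\cite[Lemma~2.2]{AMV15} and the UNI estimate of~\cite{AMV15} hold on a $C^1$-open set of vector fields, the conclusion is automatically robust.

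The main obstacle I anticipate lies in the inducing step of the first paragraph: one must check carefully that the induced system is genuinely a uniformly hyperbolic skew product in the precise sense demanded by the abstract theorem, including matching regularity of the roof function, compatibility of the induced stable foliation with the $C^{1+\alpha}$ regularity of the ambient foliation, and the exponential tail bound on the induced return time. Once this bookkeeping is completed and shown to be stable under $C^1$ perturbations of the vector field, the rest of the argument reduces to assembling the Lorenz-specific ingredients from~\cite{AMV15} into the abstract framework developed earlier in the paper.
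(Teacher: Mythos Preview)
Your proposal is correct and follows essentially the same route as the paper: reduce the Lorenz flow to a $C^{1+\alpha}$ hyperbolic skew product suspension via an inducing scheme, invoke \cite{AMV15} for both the $C^{1+\alpha}$ regularity of the stable foliation and the UNI condition, and apply the abstract Theorem~\ref{thm-flow}, with robustness coming from the $C^1$-openness of these ingredients. Two small points worth tightening: the paper explicitly invokes Tucker~\cite{Tucker02} to certify that the classical Lorenz equations really are a geometric Lorenz attractor satisfying the l.e.o.\ property (your first paragraph takes this for granted), and your ``transfer from induced flow back to the original flow'' step is unnecessary --- the suspension of the induced Poincar\'e map under the induced return time $R=\sum_{j=0}^{\tau-1}r\circ \bar g^j$ is already the original flow, so Theorem~\ref{thm-flow} applied to the induced model gives exponential mixing of the Lorenz flow directly.
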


The remainder of this paper is organized as follows.
In Section~\ref{sec-semiflow}, we consider exponential decay of correlations for a class of nonuniformly expanding skew product semiflows satisfying UNI, and extend the result of~\cite{BaladiVallee05} by showing that certain $C^2$ hypotheses can be relaxed to $C^{1+\alpha}$.
In Section~\ref{sec-flow}, we prove the analogous result for nonuniformly hyperbolic skew product flows.
In Section~\ref{sec-app}, we apply our main results to 
geometric Lorenz attractors and Axiom A flows.

 \paragraph{Notation} Write $a_n=O(b_n)$ or $a_n\ll b_n$ if there is a constant $C>0$ such that $a_n\le b_n$ for all $n$.

\section{Semiflows over $C^{1+\alpha}$ expanding maps with $C^1$ roof functions}
\label{sec-semiflow}

In this section, we prove a result on exponential decay of correlations for a class of expanding semiflows satisfying a uniform nonintegrability condition
(called UNI below).
We work mainly in an abstract framework analogous to the one in~\cite{BaladiVallee05} except that we relax the condition that the expanding map is $C^2$.

\paragraph{Uniformly expanding maps}

Fix $\alpha\in(0,1]$.
Let $\{(c_m,d_m):m\ge1\}$ be a countable partition mod~$0$ of $Y=[0,1]$ and suppose
that $F:Y\to Y$ is $C^{1+\alpha}$ on each subinterval $(c_m,d_m)$ and extends to
a homeomorphism from $[c_m,d_m]$ onto $Y$.
Let $\cH=\{h:Y\to[c_m,d_m]\}$ denote the family of inverse branches of $F$,
and let $\cH_n$ denote the inverse branches for $F^n$.  

We say that $F:Y\to Y$ is a {\em $C^{1+\alpha}$ uniformly expanding map} if
there exist constants $C_1\ge1$, $\rho_0\in(0,1)$ such that 
\begin{itemize}
\item[(i)]
$|h'|_\infty\le C_1\rho_0^n$ for all $h\in\cH_n$, 
\item[(ii)]
$|\log |h'|\,|_\alpha\le C_1$ for all $h\in\cH$,  
\end{itemize}
where
$|\log |h'|\,|_\alpha=\sup_{x\neq y}|\log|h'|(x)-\log|h'|(y)|/|x-y|^\alpha$.
Under these assumptions, it is standard that there exists a unique $F$-invariant absolutely continuous probability measure $\mu$ with 
$\alpha$-H\"older density bounded above and below.

\paragraph{Expanding semiflows}
Suppose that $R:Y\to\R^+$ is $C^1$ on partition elements $(c_m,d_m)$ with
$\inf R>0$.  
Define the suspension $Y^R=\{(y,u)\in Y\times\R:0\le u\le R(y)\}/\sim$
where $(y,R(y))\sim(Fy,0)$.  The suspension flow
$F_t:Y^R\to Y^R$ is given by $F_t(y,u)=(y,u+t)$ 
computed modulo identifications, with ergodic invariant probability measure $\mu^R=(\mu\times\Leb)/\bar R$ where $\bar R=\int_Y R\,d\mu$.
We say that $F_t$ is a {\em $C^{1+\alpha}$ expanding semiflow} provided
\begin{itemize}
\item[(iii)]
$|(R\circ h)'|_\infty\le C_1$ for all $h\in\cH$.
\item[(iv)] There exists $\eps>0$ such that $\sum_{h\in\cH}e^{\eps |R\circ h|_\infty}|h'|_\infty<\infty$.
\end{itemize}

\paragraph{Uniform nonintegrability}
Let $R_n=\sum_{j=0}^{n-1}R\circ F^j$ and define 
\[
\psi_{h_1,h_2}=R_n\circ h_1-R_n\circ h_2:Y\to\R,
\]
for $h_1,h_2\in\cH_n$.
We require 
\begin{itemize}
\item[(UNI)] There exists $D>0$, and
$h_1,h_2\in\cH_{n_0}$, for some sufficiently large integer $n_0\ge1$, such that $\inf|\psi_{h_1,h_2}'|\ge D$.
\end{itemize}
The requirement ``sufficiently large'' can be made explicit.
There are constants $C_3$ and $C_4$ in Lemmas~\ref{lem-LY} and~\ref{lem-cone}
below that depend only
on $C_1$, $\rho_0$, $\alpha$ and the spectral properties of the transfer operator of $F$.  
We impose in addition the condition $C_4\ge 6C_3$.
Then we require $n_0$ sufficiently large that
\begin{align}
\label{eq-large1} & C_1^\alpha C_4\rho^{n_0}(4\pi/D)^\alpha\le {\SMALL\frac14}(2-2\cos{\SMALL\frac{\pi}{12}})^{1/2}\le {\SMALL\frac14}, \\
\label{eq-large2}  & 2\rho^{n_0}(1+C_1^\alpha C_4)\le 1, \\
\label{eq-large3}  & C_3\rho^{n_0}\le {\SMALL\frac13},
\end{align}
where $\rho=\rho_0^\alpha$.
From now on, $n_0$ and $h_1,h_2$ are fixed throughout the paper.

\paragraph{Function space}
Define $F_\alpha(Y^R)$ to consist of $L^\infty$ functions
$v:Y^R\to \R$ such that 
$\|v\|_\alpha=|v|_\infty+|v|_\alpha<\infty$ where
\[
|v|_\alpha=\sup_{(y,u)\neq(y',u)}\frac{|v(y,u)-v(y',u)|}{|y-y'|^\alpha}.
\]
Define $F_{\alpha,k}(Y^R)$ to consist of functions
with $\|v\|_{\alpha,k}=\sum_{j=0}^k \|\partial_t^jv\|_\alpha<\infty$ where $\partial_t$ denotes differentiation along the semiflow direction.

We can now state the main result in this section.
Given $v\in L^1(Y^R)$, $w\in L^\infty(Y^R)$, define the correlation function
\[
\rho_{v,w}(t)=\int v\,w\circ F_t\,d\mu^R
-\int v\,d\mu^R \int w\,d\mu^R.
\]

\begin{thm} \label{thm-semiflow}
Assume conditions (i)--(iv) and UNI.
Then there exist constants $c,C>0$ such that
\[
|\rho_{v,w}(t)|\le C e^{-c t}\|v\|_{\alpha,2}|w|_\infty,
\]
for all $v\in F_{\alpha,2}(Y^R)$, $w\in L^\infty(Y^R)$, $t>0$.
\end{thm}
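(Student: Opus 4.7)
The plan is to reduce Theorem \ref{thm-semiflow} to Dolgopyat-type spectral bounds on a family of twisted transfer operators on the base. Let $L$ denote the transfer operator of $F$, and for $s=\sigma+ib\in\C$ set
\[
\widehat{L}_s v(y) = \sum_{h\in\cH} e^{-sR(h(y))}|h'(y)|\,v(h(y)).
\]
Hypothesis (iv) makes $\widehat{L}_s$ act boundedly on $C^\alpha(Y)$ for $|\Re s|<\eps$, and classical Ruelle theory (using (i), (ii)) gives a spectral gap for $\widehat{L}_0=L$. Following the framework of \cite{BaladiVallee05,AGY06}, exponential decay of $\rho_{v,w}(t)$ reduces via Laplace transform and contour shifting to two statements: (A) analyticity of $s\mapsto (I-\widehat{L}_s)^{-1}$ in a strip $\Re s>-c$ strictly to the left of the imaginary axis; and (B) polynomial bounds for this resolvent on vertical lines in that strip. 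The smoothness in the flow direction, i.e.\ $v\in F_{\alpha,2}(Y^R)$, supplies a $|b|^{-2}$ factor via two integrations by parts, which renders the inverse Laplace integral absolutely convergent.

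First I would establish a Lasota--Yorke inequality (Lemma \ref{lem-LY}): for $s=\sigma+ib$ with $|\sigma|<\eps$ and $v\in C^\alpha(Y)$,
\[
|\widehat{L}_s^n v|_\alpha \le C_3 \rho^n |v|_\alpha + C_3(1+|b|)|v|_\infty,
\qquad |\widehat{L}_s^n v|_\infty \le C_3|v|_\infty.
\]
The $\alpha$-H\"older distortion (ii), the exponential contraction (i), and the $C^1$ control of $R$ from (iii) are precisely what is needed here; the $C^2$ assumption used in \cite{BaladiVallee05} plays no role in this step, so the weakening to $C^{1+\alpha}$ on $F$ and $C^1$ on $R$ costs nothing.

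The heart of the argument, and the step I expect to be the main obstacle, is the Dolgopyat estimate: for $|b|\ge b_0$ there exist $\gamma<1$ and an integer $N$ such that
\[
\int |\widehat{L}_{ib}^{Nn_0}v|^2\,d\mu \le \gamma \int u^2\,d\mu
\]
whenever $|v|\le u$ with $u$ in an invariant cone $\cC_b=\{u>0:|\log u|_\alpha\le C_4|b|\}$. Interpolated with the Lasota--Yorke bound this yields $\|\widehat{L}_{ib}^n v\|_\alpha\le C\rho_1^n|b|^A\|v\|_\alpha$ for some $\rho_1<1$ and $A\ge 0$, which closes the Laplace argument. Cone invariance (Lemma \ref{lem-cone}, with the sufficient conditions (\ref{eq-large1})--(\ref{eq-large3}) on $n_0$) uses only the distortion estimates coming from (ii) and (iii). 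The $L^2$ gain, by contrast, is the oscillatory cancellation driven by UNI: along the two branches $h_1,h_2\in\cH_{n_0}$ the relative phase $b\psi_{h_1,h_2}$ rotates at speed at least $D|b|$, so on a partition of $Y$ into sub-intervals of scale $\sim|b|^{-1}$ the contributions of $h_1$ and $h_2$ to $\widehat{L}_{ib}^{n_0}v$ cannot remain in phase simultaneously, producing a definite squared loss on each piece. The delicate point, and precisely where the passage from $C^2$ to $C^{1+\alpha}$ enters, is controlling the variation of $\psi_{h_1,h_2}'$ across such a sub-interval: with $\psi_{h_1,h_2}''$ no longer available, a mean value estimate must be replaced by an $\alpha$-H\"older estimate. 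This is what dictates the shape of the threshold (\ref{eq-large1}) with its $(4\pi/D)^\alpha$ factor, and what forces the calibration of the cone constant $C_4$ against $C_1$ through $C_4\ge 6C_3$ together with (\ref{eq-large2})--(\ref{eq-large3}). Once the bookkeeping is redone with H\"older rather than $C^1$ estimates of the relevant derivatives, the Baladi--Vall\'ee oscillatory integral argument goes through with only cosmetic modification, and the Laplace transform machinery then yields the stated exponential rate against the norm $\|v\|_{\alpha,2}|w|_\infty$.
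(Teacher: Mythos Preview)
Your plan is correct and follows essentially the same route as the paper: Laplace transform plus contour shift to $\Re s=-\tfrac12\eps$, reduced via Pollicott's formula to a Dolgopyat bound $\|P_s^n\|_b\le\gamma^n$ for $n\ge A\log|b|$, which in turn is built from the Lasota--Yorke inequality, cone invariance, a UNI cancellation lemma, $L^2$ contraction, and an $L^2$-to-$C^\alpha$ bootstrap. Two small corrections to your details: the correct scaling throughout is $|b|^\alpha$, not $|b|$ (in the Lasota--Yorke constant, in the cone $|\log u|_\alpha\le C_4|b|^\alpha$, and in the final resolvent bound, which the paper gets as $(1+|b|^{1/2})$); and your identification of where the $C^2\to C^{1+\alpha}$ relaxation bites is off --- the cancellation lemma never uses $\psi_{h_1,h_2}''$, only the sup bounds $D\le|\psi'|\le 2C_2$ from UNI and (iii$_1$), so the H\"older relaxation actually enters through condition (ii) on $\log|h'|$ in the Lasota--Yorke and cone-invariance estimates (e.g.\ the bound $|h'x-h'y|\le 2C_2|h'y|\,|x-y|^\alpha$).
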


An alternative, and more symmetric, formulation is to require that $v,w\in F_{\alpha,1}(Y^R)$.
The current formulation has the advantage that we can deduce the almost sure invariance principle (ASIP) for the
time-$1$ map $F_1$ of the semiflow.

\begin{cor}[ASIP] \label{cor-time1}
	Assume conditions (i)--(iv) and UNI, and suppose that $v\in F_{\alpha,3}(Y^R)$ with $\int_{Y^R}v\,d\mu^R=0$.  Then the ASIP holds for the time-$1$ map:
	passing to an enriched probability space, there exists a sequence $X_0,X_1,\ldots$ of iid normal random variables with mean zero and variance $\sigma^2$ such that
\[
	\sum_{j=0}^{n-1}v\circ F_1^j=\sum_{j=0}^{n-1}X_j
	+O(n^{1/4}(\log n)^{1/2}(\log\log n)^{1/4}),\;a.e.
\]
The variance is given by
\[
\sigma^2=\lim_{n\to\infty}\frac1n\int\Bigl(\sum_{j=0}^{n-1}v\circ F_1^j\Bigr)^2\,d\mu
=\sum_{n=-\infty}^\infty \int v\,v\circ F_1^n\,d\mu.
\]
The degenerate case $\sigma^2=0$ occurs if and only if $v=\chi\circ F_1-\chi$ for some $\chi$, where $\chi \in L^p$ for all $p<\infty$.
\end{cor}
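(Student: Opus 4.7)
The plan is to apply an abstract almost sure invariance principle for stationary sequences to $X_j=v\circ F_1^j$, using Theorem~\ref{thm-semiflow} as the mixing input. Substituting $w=v$ in Theorem~\ref{thm-semiflow} yields $|\rho_{v,v}(n)|\le Ce^{-cn}\|v\|_{\alpha,2}|v|_\infty$, so the series $\sum_{n\in\Z}\int v\cdot v\circ F_1^n\,d\mu^R$ converges absolutely; a standard Ces\`aro computation identifies this sum with $\lim_{n\to\infty}n^{-1}\int\bigl(\sum_{j<n}v\circ F_1^j\bigr)^2\,d\mu^R$, giving the stated value of $\sigma^2$.

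For the ASIP itself I would use Gordin's martingale--coboundary method. Let $P$ denote the transfer operator of $F_1$ acting on a suitable Banach space $\cV\subset L^2(\mu^R)$ that contains $F_{\alpha,3}(Y^R)$ and on which Theorem~\ref{thm-semiflow} yields $\|P^nv\|_\cV\le Ce^{-cn}\|v\|_\cV$ for mean-zero $v$. Then $\chi=\sum_{n\ge1}P^nv$ converges in $\cV$ and in $L^p(\mu^R)$ for every $p<\infty$, and $m=v+\chi-\chi\circ F_1$ satisfies $Pm=0$; equivalently, $v=m+\chi\circ F_1-\chi$ with $\{m\circ F_1^j\}$ a stationary reverse martingale difference sequence with respect to the decreasing filtration $F_1^{-j}\mathcal{B}$. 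A standard martingale ASIP (e.g.\ Heyde--Scott, with the rate as in Philipp--Stout) applied to this sequence produces the Brownian approximation with rate $O(n^{1/4}(\log n)^{1/2}(\log\log n)^{1/4})$; the coboundary piece telescopes to $\chi\circ F_1^n-\chi$, which is $o(n^\eps)$ a.s.\ for every $\eps>0$ (since $\chi\in L^p$ for all $p<\infty$) and so is absorbed into the error term.

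For the degenerate case, orthogonality of martingale differences gives $\sigma^2=\|m\|_2^2$, so $\sigma^2=0$ forces $m=0$ and hence $v=\chi\circ F_1-\chi$, with $\chi\in L^p$ for all $p<\infty$ by construction. The converse is the standard observation that if $v$ is a measurable coboundary then the Birkhoff sums telescope, which forces $\sigma^2=0$.

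The main obstacle is the choice of $\cV$: Theorem~\ref{thm-semiflow} is an asymmetric mixing estimate (smooth first argument, $L^\infty$ second argument), so recasting it as a one-sided spectral bound $\|P^nv\|_\cV\le Ce^{-cn}\|v\|_\cV$ for a single operator on a single Banach space requires care. The extra derivative in the hypothesis $F_{\alpha,3}$ (versus $F_{\alpha,2}$ used in Theorem~\ref{thm-semiflow}) is the regularity slack needed to ensure that the iterates $P^nv$ remain in $\cV$ with only bounded loss, closing the spectral gap argument and thereby validating the Gordin decomposition above.
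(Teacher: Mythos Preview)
The paper's own proof is a one-line citation: ``This is immediate from \cite[Theorem~5.2]{AMV15}.'' That reference is an abstract result which takes as hypothesis exactly the asymmetric decay estimate of Theorem~\ref{thm-semiflow} (smooth test function against $L^\infty$) and returns the ASIP for the time-$1$ map with the stated error rate. Your sketch is essentially an attempt to reconstruct the argument behind that black box, and the overall strategy --- Gordin martingale--coboundary decomposition followed by a reverse-martingale ASIP with the stated rate --- is the right one.

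The gap is in your handling of the transfer operator. You assume the existence of a Banach space $\cV$ on which $\|P^nv\|_\cV\le Ce^{-cn}\|v\|_\cV$, but Theorem~\ref{thm-semiflow} does not deliver this, and your suggestion that the extra derivative in $F_{\alpha,3}$ supplies ``regularity slack'' to keep $P^nv$ inside $\cV$ does not hold up: there is no evident mechanism by which $P$ preserves $F_{\alpha,k}$ (or any similar space) with bounded norm loss. The way out is to drop the single-space spectral-gap picture altogether. By duality, Theorem~\ref{thm-semiflow} gives $|P^nv|_{L^1}\le Ce^{-cn}\|v\|_{\alpha,2}$ for mean-zero $v$; interpolating with the trivial contraction $|P^nv|_\infty\le|v|_\infty$ yields $|P^nv|_{L^p}\ll e^{-cn/p}$ for every $p<\infty$. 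Hence $\chi=\sum_{n\ge1}P^nv$ converges in every $L^p$, $m=v+\chi-\chi\circ F_1$ satisfies $Pm=0$ and lies in every $L^p$, and the remainder of your argument (martingale ASIP plus telescoping coboundary, and the degenerate case via $\sigma^2=\|m\|_2^2$) goes through unchanged. This is, up to packaging, what \cite[Theorem~5.2]{AMV15} does; the hypothesis $v\in F_{\alpha,3}$ enters through the precise formulation of that theorem rather than through any Banach-space construction of the kind you propose.
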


\begin{proof}  This is immediate from~\cite[Theorem~5.2]{AMV15}.
\end{proof}

Suppose that $\phi_t:M\to M$ is an ergodic semiflow defined on a compact Riemannian manifold $M$ with probability measure $\nu$
such that there is a semiconjugacy $\pi:Y^R\to M$
satisfying $\pi_*\mu^R=\nu$ and $\phi_t\circ \pi=\pi\circ F_t$.
Suppose further that $C^2$ observables $v:M\to\R$ lift to observables
$v\circ\pi\in F_{\alpha,2}(Y^R)$.
Then it is immediate that
$\bar\rho_{v,w}(t)=\int v\,w\circ \phi_t\,d\nu
-\int v\,d\nu \int w\,d\nu$ decays exponentially for $v\in C^2(M)$,
$w\in L^\infty(M)$.   As in~\cite{Dolgopyat98a}, it follows from interpolation that $v:M \to\R$ is required only to be H\"older:

\begin{cor} \label{cor-main}
For any $\eta>0$,
there exist constants $c,C>0$ such that
\[
|\bar\rho_{v,w}(t)|\le Ce^{-c t}\|v\|_{C^\eta}|w|_\infty
\]
for all $v\in C^\eta(M)$, $w\in L^\infty(M)$, $t>0$.
\end{cor}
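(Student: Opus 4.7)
The plan is to use a standard Dolgopyat-style interpolation argument, deducing the H\"older bound from the $C^2$ bound already supplied by Theorem~\ref{thm-semiflow} via the semiconjugacy $\pi$. The idea is to approximate a given H\"older observable by a family of $C^2$ observables whose $C^2$-norm blows up in a controlled way as the approximation improves, and then balance the two resulting error terms against the exponential decay obtained in the smooth case.

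Fix $v\in C^\eta(M)$. Using a finite atlas and a partition of unity on the compact manifold $M$, I would mollify $v$ at scale $\epsilon>0$ to produce $v_\epsilon\in C^2(M)$ satisfying the standard estimates
\[
|v-v_\epsilon|_\infty\ll\|v\|_{C^\eta}\epsilon^\eta,\qquad \|v_\epsilon\|_{C^2}\ll\|v\|_{C^\eta}\epsilon^{\eta-2}.
\]
Splitting
\[
\bar\rho_{v,w}(t)=\bar\rho_{v-v_\epsilon,w}(t)+\bar\rho_{v_\epsilon,w}(t),
\]
the first term is trivially bounded by $2|v-v_\epsilon|_\infty|w|_\infty\ll\|v\|_{C^\eta}\epsilon^\eta|w|_\infty$. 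For the second term, the hypothesis that $C^2$ observables lift boundedly to $F_{\alpha,2}(Y^R)$, together with Theorem~\ref{thm-semiflow} applied to the lifts $v_\epsilon\circ\pi$ and $w\circ\pi$ (using $\pi_*\mu^R=\nu$ and $\phi_t\circ\pi=\pi\circ F_t$), gives
\[
|\bar\rho_{v_\epsilon,w}(t)|\ll e^{-c_0 t}\|v_\epsilon\|_{C^2}|w|_\infty\ll e^{-c_0 t}\|v\|_{C^\eta}\epsilon^{\eta-2}|w|_\infty,
\]
where $c_0>0$ is the rate furnished by Theorem~\ref{thm-semiflow}.

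The optimization is then immediate: setting $\epsilon=e^{-c_0 t/2}$ makes both $\epsilon^\eta$ and $e^{-c_0 t}\epsilon^{\eta-2}$ equal to $e^{-c_0\eta t/2}$, producing exponential decay at rate $c=c_0\eta/2$, which is the claimed bound. The only nontrivial step, which I expect to be routine, is verifying that the constant in $\|v_\epsilon\circ\pi\|_{\alpha,2}\ll\|v_\epsilon\|_{C^2}$ is uniform in $\epsilon$; this is an implicit part of the hypothesis preceding the corollary, and follows by the chain rule since $\pi$ is fixed and the $F_{\alpha,2}$-norm involves only boundedness plus two $\partial_t$-derivatives plus the $\alpha$-H\"older seminorm along the unstable coordinate, each of which is controlled by $|v_\epsilon|_\infty$, $|dv_\epsilon|_\infty$ and $|d^2v_\epsilon|_\infty$.
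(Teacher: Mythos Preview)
Your proposal is correct and follows essentially the same interpolation argument as the paper's proof: approximate $v$ by a $C^2$ function, apply Theorem~\ref{thm-semiflow} to the smooth piece via the semiconjugacy, bound the rough piece trivially, and optimize over the approximation scale. The only difference is that you use the slightly sharper mollification estimate $\|v_\epsilon\|_{C^2}\ll\|v\|_{C^\eta}\epsilon^{\eta-2}$ (whereas the paper uses the cruder $\|\tilde v\|_{C^2}\le\delta^{-2}|v|_\infty$), which yields the marginally better rate $c=c_0\eta/2$ instead of the paper's $c=c_0\eta/(2+\eta)$; this is a cosmetic improvement, not a different method.
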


\begin{proof}
Let $\delta\in(0,1)$.   We can choose $\tilde v\in C^2(M)$
with $|v-\tilde v|_\infty\le\delta^\eta\|v\|_{C^\eta}$ and $\|\tilde v\|_{C^2}\le \delta^{-2}|v|_\infty$.

Let $\tilde c,\tilde C>0$ be the constants
in Theorem~\ref{thm-semiflow}.  Then
\[
|\bar\rho_{\tilde v,w}(t)|\le 
\tilde Ce^{-\tilde  c t}\|\tilde v\|_{C^2}|w|_\infty
\le \tilde Ce^{-\tilde c t}\delta^{-2}|v|_\infty|w|_\infty.
\]
Also
\[ |\bar\rho_{v,w}(t)-\bar\rho_{\tilde v,w}(t)|\le 2|v-\tilde v|_\infty|w|_\infty
\le 2\delta^\eta\|v\|_{C^\eta}|w|_\infty.
\]
Setting $\delta= e^{-\tilde c t/(2+\eta)}$, we obtain the 
desired result with $c=\tilde c\eta/(2+\eta)$.
\end{proof}

\begin{rmk} \label{rmk-ASIP}
	In this setting, we obtain from~\cite{AMV15} that the ASIP for the time-$1$ map
	$\phi_1$ holds for all mean zero observables
	$v\in C^{1+\eta}(M)$.
	Moreover, by~\cite[Section~6]{AMV15}, the degenerate case $\sigma^2=0$ is
	of infinite codimension.
\end{rmk}

The remainder of this section is devoted to the proof of Theorem~\ref{thm-semiflow}.

\subsection{Twisted transfer operators}
\label{sec-P}

For $s\in\C$, let $P_s$ denote the (non-normalised) twisted transfer operator, 
\[
P_s=\sum_{h\in\cH}A_{s,h}, \quad A_{s,h}v=e^{-sR\circ h}|h'|v\circ h.
\]

For $v:Y\to\C$, define $\|v\|_\alpha=\max\{|v|_\infty,|v|_\alpha\}$ where $|v|_\alpha=\sup_{x\neq y}|v(x)-v(y)|/|x-y|^\alpha$.
Let $C^\alpha(Y)$ denote the space of functions $v:Y\to\C$ with $\|v\|_\alpha<\infty$.
It is convenient to introduce the family of equivalent norms
\[
\|v\|_b=\max\{|v|_\infty, |v|_\alpha/(1+|b|^\alpha)\}, \quad b\in\R.
\]
Note that 
\begin{align} \label{eq-alg}
\|vw\|_b\le 2\|v\|_b\|w\|_b \quad\text{ for all $v,w\in C^\alpha(Y)$.}
\end{align}

\begin{prop} \label{prop-P}
Write $s=\sigma+ib$.
There exists $\eps\in(0,1)$ such that
the family $s\mapsto P_s$ of operators on $C^\alpha(Y)$ is continuous on
$\{\sigma>-\eps\}$.
Moreover, $\sup_{|\sigma|<\eps}\|P_s\|_b<\infty$.
\end{prop}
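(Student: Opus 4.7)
The plan is to verify that the series defining $P_s$ converges absolutely in operator norm on $C^\alpha(Y)$ under the stated hypotheses, and moreover enjoys a uniform bound in the scaled norm $\|\cdot\|_b$. The basic strategy is to bound each summand $A_{s,h}$ using the algebra property~\eqref{eq-alg}, writing $A_{s,h}v = g_h\cdot(v\circ h)$ with $g_h=e^{-sR\circ h}|h'|$, and then to sum using condition~(iv).

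First I would establish the single-branch estimates. The $L^\infty$ bound $|g_h|_\infty \le e^{|\sigma||R\circ h|_\infty}|h'|_\infty$ is immediate, and $|v\circ h|_\alpha \le |h'|_\infty^\alpha\,|v|_\alpha$ follows from the mean value theorem. For the Hölder seminorm of $g_h$, I split $e^{-sR\circ h} = e^{-\sigma R\circ h}e^{-ibR\circ h}$. The real factor contributes an $\alpha$-Hölder seminorm of size at most $|\sigma|C_1 e^{|\sigma||R\circ h|_\infty}$ using condition~(iii), while the oscillatory factor is estimated via the interpolation inequality $|e^{iu}-e^{iv}|\le\min(2,|u-v|)$, yielding
\[
|e^{-ibR\circ h}|_\alpha \le 2^{1-\alpha}(C_1|b|)^\alpha.
\]
Combined with the bound on the $\alpha$-Hölder seminorm of $|h'|$ coming from condition~(ii), this gives
\[
|g_h|_\alpha \le K\,e^{|\sigma||R\circ h|_\infty}|h'|_\infty(1+|b|^\alpha)
\]
for some constant $K$ depending only on $C_1$, $\alpha$ and the chosen bound on $|\sigma|$.

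Next, I would fix $\eps\in(0,1)$ strictly smaller than the constant in~(iv), so that $M=\sup_{|\sigma|\le\eps}\sum_{h\in\cH}e^{|\sigma||R\circ h|_\infty}|h'|_\infty<\infty$. Summing the per-branch estimates gives
\[
|P_sv|_\infty\le M|v|_\infty, \qquad |P_sv|_\alpha\le KM(1+|b|^\alpha)|v|_\infty+M(C_1\rho_0)^\alpha|v|_\alpha.
\]
Dividing the second estimate by $(1+|b|^\alpha)$ and taking the maximum with the first yields the uniform bound $\|P_s\|_b\le KM+M(C_1\rho_0)^\alpha$ claimed in the proposition. For continuity in the standard $C^\alpha$ operator norm I would repeat the Hölder estimates for the difference $(e^{-sR\circ h}-e^{-s_0R\circ h})|h'|$, which picks up an extra factor of $|s-s_0||R\circ h|_\infty$ by the mean value theorem applied to $z\mapsto e^{-zR\circ h}$, and then dominate the sum over $h$ using condition~(iv) with a slightly smaller exponent to absorb the polynomial growth in $|R\circ h|_\infty$.

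The main obstacle is the careful bookkeeping required to produce precisely the factor $1+|b|^\alpha$ in the Hölder estimate for $g_h$: since $R\circ h$ is only $C^1$ rather than $C^{1+\alpha}$, one cannot differentiate $e^{-ibR\circ h}$ in the Hölder norm, and the interpolation step via $\min(2,|u-v|)$ is essential for the correct $|b|^\alpha$ scaling that matches the weight in $\|\cdot\|_b$. Once this single per-branch inequality is in place, the rest of the proof reduces to routine dominated convergence against the summable majorant supplied by condition~(iv).
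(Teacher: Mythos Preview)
Your proposal is correct and follows essentially the same approach as the paper: both use the interpolation bound $|e^{it}-1|\le 2\min\{1,|t|\}\le 2|t|^\alpha$ for the oscillatory factor, condition~(ii) for the H\"older regularity of $|h'|$, and summability via condition~(iv). The only cosmetic difference is that you package the estimate as a product $g_h\cdot(v\circ h)$ via the algebra inequality~\eqref{eq-alg}, whereas the paper writes out the four-term telescoping decomposition of $(A_{s,h}v)(x)-(A_{s,h}v)(y)$ directly; the resulting bounds are the same.
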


\begin{proof}
Since $\diam h(Y)=|h(1)-h(0)|$,
it follows from (ii) and the mean value theorem that 
$|h'|\le e^{C_1}\diam h(Y)\le e^{C_1}$.
Using the inequality $t\le 2\log(1+t)$ valid for $t\in[0,1]$, we obtain
$|h'x-h'y|/|h'y|\le 2\log(h'x/h'y)\le 2C_1|x-y|^\alpha$ and so
\begin{align} \label{eq-h'0}
|h'x-h'y|\le 2C_1|h'y||x-y|^\alpha, \quad\text{for all
$h\in\cH$, $x,y\in Y$.}
\end{align}

Note that 
\[
|A_{s,h}v|\le e^{\eps R\circ h}|h'||v|_\infty,
\]
so $\sup_{\Re s\ge-\eps}|P_s|_\infty<\infty$ by~(iv).
Also, 
\begin{align*}
& (A_{s,h}v)(x)-
 (A_{s,h}v)(y)
 =(e^{-\sigma R\circ h(x)}-e^{-\sigma R\circ h(y)})e^{-ib R\circ h(x)}|h'x|v(hx)
 \\ & \qquad +e^{-\sigma R\circ h(y)}(e^{-ibR\circ h(x)}-e^{-ibR\circ h(y)})|h'x|v(hx)
\\ &  \qquad +e^{-sR\circ h(y)}(|h'x|-|h'y|)v(hx)
+e^{-sR\circ h(y)}|h'y|(v(hx)-v(hy)),
\end{align*}
and so
\begin{align*}
|(A_{s,h}v)|_\alpha
 & \le e^{\eps|R\circ h|_\infty}|\sigma| C_1|h'|_\infty|v|_\infty
 + e^{\eps|R\circ h|_\infty}2|b|^\alpha C_1^\alpha|h'|_\infty|v|_\infty
\\ & \qquad \qquad +e^{\eps |R\circ h|_\infty}2C_1|h'|_\infty|v|_\infty
+e^{\eps |R\circ h|_\infty}|h'|_\infty|v|_\alpha C_1^\alpha \\
& \le C_1e^{\eps |R\circ h|_\infty}|h'|_\infty\{(2+|\sigma|+2|b|^\alpha)|v|_\infty
+ |v|_\alpha\},
\end{align*}
where we have used~\eqref{eq-h'0} for the third term and
 the inequality $|e^{it}-1|\le 2\min\{1,|t|\}\le 2|t|^\alpha$ for the second term.
Altogether, $\|A_{s,h}\|_b\ll (1+|\sigma|+|b|^\alpha)(1+|b|^\alpha)^{-1}e^{\eps|R\circ h|_\infty}|h'|_\infty$.
Shrinking $\epsilon$ slightly, it follows from~(iv) that the series $\sum_{h\in\cH}\|A_{s,h}\|_b$ converges 
uniformly in $\sigma\in S$ for any compact subset $S\subset[-\eps,\infty)$.~
\end{proof}

The unperturbed operator $P_0$ has a simple leading eigenvalue $\lambda_0=1$
with strictly positive $C^\alpha$ eigenfunction $f_0$.   By Proposition~\ref{prop-P}, there exists $\eps\in(0,1)$ such that $P_\sigma$ has a continuous family of simple eigenvalues $\lambda_\sigma$
for $|\sigma|<\eps$ with associated $C^\alpha$ eigenfunctions $f_\sigma$.
Shrinking $\eps$ if necessary, we can ensure that $\lambda_\sigma>0$ and
$f_\sigma$ is strictly positive for $|\sigma|<\eps$.

\begin{rmk} \label{rmk-P}
By standard perturbation theory, for any $\delta>0$ there exists
$\eps\in(0,1)$ such that
$\sup_{|\sigma|<\eps}|\lambda_\sigma-1|<\delta$,
$\sup_{|\sigma|<\eps}|f_\sigma/f_0-1|_\infty<\delta$ and
$\sup_{|\sigma|<\eps}|f_\sigma/f_0-1|_\alpha<\delta$.

Hence, we may suppose throughout that
\[
{\SMALL\frac12}\le \lambda_\sigma\le 2, \quad
{\SMALL\frac12}f_0\le f_\sigma\le 2f_0, \quad
{\SMALL\frac12}|f_0|_\alpha\le
|f_\sigma|_\alpha\le 2|f_0|_\alpha.
\]
\end{rmk}

Next, for $s=\sigma+ib$ with $|\sigma|\le\eps$ we define the normalised 
transfer operators 
\[
L_sv=(\lambda_\sigma f_\sigma)^{-1}P_s(f_\sigma v)
=(\lambda_\sigma f_\sigma)^{-1}\sum_{h\in\cH}A_{s,h}(f_\sigma v).
\]
In particular, $L_\sigma 1=1$ for all $\sigma$
and $|L_s|_\infty\le1$ for all $s$ (where defined).

\subsection{Lasota-Yorke inequality}
\label{sec-LY}

Set $C_2=C_1^2/(1-\rho)$, $\rho=\rho_0^\alpha$.  Then 
\begin{itemize}
\item[(ii$_1$)]
$|\log |h'|\,|_\alpha\le C_2$ for all $h\in\cH_n$, $n\ge1$,  
\item[(iii$_1$)] 
$|(R_n\circ h)'|_\infty\le C_2$ for all $h\in\cH_n$, $n\ge1$.
\end{itemize}
Using the arguments from the beginning of the proof of Proposition~\ref{prop-P},
it follows from (ii$_1$) that 
\begin{align} \label{eq-diam}
e^{-C_2}\diam h(Y)\le 
|h'|\le e^{C_2}\diam h(Y),\quad
\text{for all $h\in\cH_n$, $n\ge1$}.
\end{align}
 In particular
$\sum_{h\in \cH_n}|h'|\le e^{C_2}$ .
Also 
\begin{align} \label{eq-h'}
|h'x-h'y|\le 2C_2|h'y||x-y|^\alpha, \quad\text{for all
$h\in\cH_n$, $n\ge1$, $x,y\in Y$.}
\end{align}

Write
\[
L_s^nv=\lambda_\sigma^{-n} f_\sigma^{-1}\sum_{h\in\cH_n}A_{s,h,n}(f_\sigma v), \quad
A_{s,h,n}v=e^{-sR_n\circ h}|h'|v\circ h.
\]

\begin{lemma} \label{lem-LY}
There is a constant $C_3>1$ such that 
\[
|L_s^nv|_\alpha\le C_3(1+|b|^\alpha) |v|_\infty+C_3\rho^n |v|_\alpha
\le C_3(1+|b|^\alpha)\{ |v|_\infty+\rho^n \|v\|_b\},
\]
for all $s=\sigma+ib$, $|\sigma|<\eps$, and all $n\ge1$, $v\in C^\alpha(Y)$.
\end{lemma}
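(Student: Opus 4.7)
The plan is to telescope $L_s^n v(x) - L_s^n v(y)$ branch by branch and to exploit the normalization $L_\sigma^n 1 = 1$ as a probability weighting that collapses sums back to an integral. With the nonnegative weight
\[ K_h(x) = \lambda_\sigma^{-n} f_\sigma(x)^{-1} e^{-\sigma R_n(hx)} |h'(x)| f_\sigma(hx), \]
one has $L_s^n v(x) = \sum_{h \in \cH_n} K_h(x)\, e^{-ibR_n(hx)}\, v(hx)$ and $\sum_h K_h \equiv 1$, with absolute summability guaranteed by (iv) and Proposition~\ref{prop-P}.

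For each $h$ and each pair $x,y$, I would split
\begin{align*}
& K_h(x) e^{-ibR_n(hx)} v(hx) - K_h(y) e^{-ibR_n(hy)} v(hy) \\
& \quad = (K_h(x) - K_h(y))\, e^{-ibR_n(hx)}\, v(hx) \\
& \qquad + K_h(y)\, (e^{-ibR_n(hx)} - e^{-ibR_n(hy)})\, v(hx) \\
& \qquad + K_h(y)\, e^{-ibR_n(hy)}\, (v(hx) - v(hy)),
\end{align*}
sum over $h$, and bound the three resulting sums separately. For the first, write $K_h(x)/K_h(y)$ as a product of four ratios coming from $f_\sigma^{-1}$, $e^{-\sigma R_n\circ h}$, $|h'|$ and $f_\sigma\circ h$; each is $1+O(|x-y|^\alpha)$ with constants uniform in $h,n$, thanks to Remark~\ref{rmk-P} (giving $|\log f_\sigma|_\alpha$ bounded), (iii$_1$) (Lipschitz bound on $R_n\circ h$, remembering $|x-y|\le |x-y|^\alpha$), and \eqref{eq-h'}. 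Combined with $\sum_h K_h(y)=1$, this yields a bound $C|v|_\infty|x-y|^\alpha$. The second sum uses $|e^{i\theta_1}-e^{i\theta_2}|\le 2|\theta_1-\theta_2|^\alpha$ together with the Lipschitz estimate from (iii$_1$), producing a bound $2C_2^\alpha|b|^\alpha|v|_\infty|x-y|^\alpha$. The third sum uses the uniform contraction $|hx-hy|\le C_1\rho_0^n|x-y|$ from (i), producing a bound $C_1^\alpha\rho^n|v|_\alpha|x-y|^\alpha$.

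Adding the three bounds and choosing $C_3$ to dominate the constants produces the first inequality; the second inequality is immediate from $|v|_\alpha\le (1+|b|^\alpha)\|v\|_b$. The main obstacle is the Gibbs-type distortion estimate $|K_h(x)-K_h(y)|\le CK_h(y)|x-y|^\alpha$ with $C$ independent of $n$; the iterated bounds (ii$_1$) and (iii$_1$) --- which follow from (ii), (iii), and the geometric sum $\sum_j \rho_0^{j\alpha}$ --- are precisely what make this uniform-in-$n$ control possible, and they rely only on $C^{1+\alpha}$, not $C^2$, regularity of $F$ and $R$.
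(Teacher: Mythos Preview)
Your argument is correct and follows essentially the same strategy as the paper: telescope branch by branch, control each piece using the distortion bounds (ii$_1$), (iii$_1$), \eqref{eq-h'}, the oscillation inequality $|e^{it}-1|\le 2|t|^\alpha$, and the contraction (i), then collapse the sum via the normalization $L_\sigma^n1\equiv1$. The only difference is organizational: the paper telescopes $A_{s,h,n}(f_\sigma v)$ into four pieces $J_1,\dots,J_4$ and treats the outer factor $f_\sigma^{-1}$ separately at the end, whereas you absorb $f_\sigma^{-1}$, $e^{-\sigma R_n\circ h}$, $|h'|$ and $f_\sigma\circ h$ into the single probability weight $K_h$ and handle their combined log-H\"older bound in one step, yielding a three-term split; this packaging is slightly more economical but neither approach gains or loses anything substantive.
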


\begin{proof}
Compute that
\begin{align*}
& (A_{s,h,n}v)(x)- (A_{s,h,n}v)(y)  =
(e^{-\sigma R_n\circ h(x)}- e^{-\sigma R_n\circ h(y)})
e^{-ib R_n\circ h(x)}|h'x|v(hx)
\\ & \qquad +e^{-\sigma R_n\circ h(y)} (e^{-ib R_n\circ h(x)}- e^{-ib R_n\circ h(y)}) |h'x|v(hx)
\\ & \qquad +e^{-s R_n\circ h(y)} (|h'x|-|h'y|)v(hx)
+e^{-s R_n\circ h(y)} |h'y|(v(hx)-v(hy)) \\ & \quad = J_1+J_2+J_3+J_4.
\end{align*}
Using~(iii$_1$) and~\eqref{eq-h'},
\begin{align*}
|J_1| & \le 
e^{-\sigma R_n\circ h(y)}|\sigma|C_2|x-y||h'x||v(hx)|
 \le C_2(1+2C_2)e^{-\sigma R_n\circ h(y)}|h'y||v|_\infty |x-y|^\alpha
\\ & =C_2(1+2C_2)(A_{\sigma,h,n}|v|_\infty)(y) |x-y|^\alpha.
\end{align*}
Similarly,
$|J_2|\le 2C_2^\alpha(1+2C_2)|b|^\alpha (A_{\sigma,h,n}|v|_\infty)(y)|x-y|^\alpha$, 
$|J_3|\le 2C_2(A_{\sigma,h,n}|v|_\infty)(y)|x-y|^\alpha$ and
$|J_4|\le C_1^\alpha\rho^n (A_{\sigma,h,n}|v|_\alpha)(y)|x-y|^\alpha$.
Hence
\begin{align*}
 &  |(A_{s,h,n}v)(x)- (A_{s,h,n}v)(y)| 
 \\ & \qquad \qquad\qquad \le
C_2'\bigl\{(1+|b|^\alpha)(A_{\sigma,h,n}|v|_\infty)(y)
+\rho^n (A_{\sigma,h,n}|v|_\alpha)(y)\bigr\}|x-y|^\alpha.
\end{align*}

Using $|f_\sigma|_\infty|f_\sigma^{-1}|_\infty
\le 4|f_0|_\infty|f_0^{-1}|_\infty<\infty$
and $|f_\sigma|_\alpha|f_\sigma^{-1}|_\infty
\le 4|f_0|_\alpha|f_0^{-1}|_\infty<\infty$, we obtain
\begin{align*}
&  |(A_{s,h,n}(f_\sigma v))(x)- (A_{s,h,n}(f_\sigma v))(y)| \\
&\qquad \qquad\qquad  \le
C_2''\bigl\{(1+|b|^\alpha)(A_{\sigma,h,n}(f_\sigma|v|_\infty))(y)
+\rho^n (A_{\sigma,h,n}(f_\sigma |v|_\alpha))(y)\bigr\}|x-y|^\alpha.
\end{align*}
Hence
\begin{align*}
& \lambda_\sigma^{-n} f_\sigma(y)^{-1}
\Bigl|\Bigl(\sum_{h\in\cH_n}A_{s,h,n}(f_\sigma v)\Bigr)(x)-\Bigl(\sum_{h\in\cH_n} A_{s,h,n}(f_\sigma v)\Bigr)(y)\Bigr| 
\\ & \qquad \qquad\le 
C_2''\bigl\{(1+|b|^\alpha)(L_\sigma^n(|v|_\infty))(y)
+\rho^n (L_\sigma^n(|v|_\alpha))(y)\bigr\}|x-y|^\alpha
\\ & \qquad \qquad =C_2''\bigl\{(1+|b|^\alpha)|v|_\infty
+\rho^n |v|_\alpha\bigr\}|x-y|^\alpha.
\end{align*}

Finally,
\begin{align*}
(L_s^nv)(x)-(L_s^nv)(y) & =  (1-f_\sigma(x)f_\sigma(y)^{-1})\lambda_\sigma^{-n}f_\sigma(x)^{-1}\Bigl(\sum_{h\in\cH_n}A_{s,h,n}(f_\sigma v)\Bigr)(x)
\\ & \, +
 \lambda_\sigma^{-n} f_\sigma(y)^{-1}
\Bigl\{\Bigl(\sum_{h\in\cH_n}A_{s,h,n}(f_\sigma v)\Bigr)(x)-\Bigl(\sum_{h\in\cH_n} A_{s,h,n}(f_\sigma v)\Bigr)(y)\Bigr\},
\end{align*}
and so using $|1-f_\sigma(x)f_\sigma(y)^{-1}|\le C_2'''|x-y|^\alpha$,
\begin{align*}
|(L_s^nv)(x)-(L_s^nv)(y)| & \le C_2'''|x-y|^\alpha|L_\sigma^n(|v|)|_\infty
+
C_2''\{(1+|b|^\alpha)|v|_\infty
+\rho^n |v|_\alpha\}|x-y|^\alpha
\\ & \le C_3\{(1+|b|^\alpha)|v|_\infty
+\rho^n |v|_\alpha\}|x-y|^\alpha
\end{align*}
completing the proof.
\end{proof}

\begin{cor} \label{cor-LY}
$\|L_s^n\|_b\le 2C_3$ for all
$s=\sigma+ib$, $|\sigma|<\eps$, and all $n\ge1$.
\end{cor}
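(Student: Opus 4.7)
The plan is to combine Lemma~\ref{lem-LY} with the $L^\infty$ contraction property of the normalised transfer operator. Recall from Section~\ref{sec-P} that $L_\sigma 1 = 1$ and $|L_s|_\infty \le 1$ for all admissible $s$; iterating gives $|L_s^n v|_\infty \le |v|_\infty$ for every $n\ge1$. By definition of the equivalent norm we have both $|v|_\infty \le \|v\|_b$ and $|v|_\alpha \le (1+|b|^\alpha)\|v\|_b$.

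Next, I would unpack the definition $\|L_s^n v\|_b = \max\{|L_s^n v|_\infty,\; |L_s^n v|_\alpha/(1+|b|^\alpha)\}$ and bound each piece separately. The sup part gives immediately $|L_s^n v|_\infty \le |v|_\infty \le \|v\|_b$. For the H\"older part, Lemma~\ref{lem-LY} yields
\[
\frac{|L_s^n v|_\alpha}{1+|b|^\alpha} \le C_3\bigl(|v|_\infty + \rho^n \|v\|_b\bigr) \le C_3(1 + \rho^n)\|v\|_b \le 2C_3\|v\|_b,
\]
using $|v|_\infty \le \|v\|_b$ and $\rho^n \le 1$.

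Combining the two bounds and noting that $C_3 > 1$ so $2C_3 \ge 1$, I conclude
\[
\|L_s^n v\|_b \le \max\{\|v\|_b,\; 2C_3\|v\|_b\} = 2C_3\|v\|_b,
\]
which is the desired estimate. There is no real obstacle here: the corollary is a direct bookkeeping consequence of Lemma~\ref{lem-LY} together with the trivial $L^\infty$ bound on $L_s^n$, and the only mild point is remembering to use $|v|_\infty \le \|v\|_b$ to absorb the first term on the right-hand side of the Lasota--Yorke inequality into the $b$-weighted norm.
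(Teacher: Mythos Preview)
Your proof is correct and matches the paper's argument essentially verbatim: the paper also uses $|L_s^n v|_\infty\le|v|_\infty\le\|v\|_b$ for the sup part and then bounds $|L_s^n v|_\alpha\le 2C_3(1+|b|^\alpha)\|v\|_b$ via Lemma~\ref{lem-LY}, concluding $\|L_s^n\|_b\le\max\{1,2C_3\}=2C_3$.
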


\begin{proof}
It is immediate that $|L_s^nv|_\infty\le|v|_\infty\le \|v\|_b$. 
By Lemma~\ref{lem-LY},
$|L_s^nv|_\alpha \le 2C_3(1+|b|^\alpha)\|v\|_b$.
Hence $\|L_s^n\|_b\le \max\{1,2C_3\}=2C_3$.
\end{proof}

\subsection{Cancellation Lemma}
\label{sec-cancel}

Suppose that $\eps\in(0,1)$ is
chosen as in Subsection~\ref{sec-P}.
Let $C_4$ be the constant in~\eqref{eq-large1} which will be specified later (see Lemma~\ref{lem-cone}).
Throughout $B_\delta(y)=\{x\in Y:|x-y|<\delta\}$.

Given $b\in\R$, we define the cone
\begin{align*}
\cC_b=\Bigl\{ & \;(u,v): u,v\in C^\alpha(Y),\;u>0,\;0\le |v|\le u,\;
|\log u|_\alpha\le C_4|b|^\alpha,\\
& \qquad\qquad |v(x)-v(y)|\le C_4|b|^\alpha u(y)|x-y|^\alpha\quad\text{for
all $x,y\in Y$} \;\Bigr\}.
\end{align*}
Let $\eta_0=\frac12(\sqrt 7 -1)\in(\frac23,1)$.

\begin{lemma} \label{lem-cancel}
Assume that the (UNI) condition is satisfied (with associated constants $D>0$ and $n_0\ge1$).   
Let $h_1,h_2\in \cH_{n_0}$ be the branches from (UNI).  

There exists $\delta>0$ and $\Delta=2\pi/D$ such that for all 
$s=\sigma+ib$, $|\sigma|<\eps$, $|b|>4\pi/D$,
and all $(u,v)\in\cC_b$ we have the following:

For every $y_0\in Y$ there exists $y_1\in B_{\Delta/|b|}(y_0)$ such that
one of the following inequalities holds
on $B_{\delta/|b|}(y_1)$:
\begin{description}
\item[Case $h_1$:]
$|A_{s,h_1,n_0}(f_\sigma v)+ A_{s,h_2,n_0}(f_\sigma v)| \le
\eta_0 A_{\sigma,h_1,n_0}(f_\sigma u)+
A_{\sigma,h_2,n_0}(f_\sigma u)$, 
\item[Case $h_2$:]
$|A_{s,h_1,n_0}(f_\sigma v)+ A_{s,h_2,n_0}(f_\sigma v)| \le
A_{\sigma,h_1,n_0}(f_\sigma u)+
\eta_0 A_{\sigma,h_2,n_0}(f_\sigma u)$.
\end{description}
\end{lemma}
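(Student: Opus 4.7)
The plan is to run a Dolgopyat-style dichotomy, splitting on whether $|v|$ is appreciably smaller than $u$ along one of the two branches or whether one must exploit UNI-driven phase cancellation. Introduce the shorthand $W_j(x)=A_{\sigma,h_j,n_0}(f_\sigma u)(x)$, which is real and positive, and $Z_j(x)=e^{-\sigma R_{n_0}\circ h_j(x)}|h_j'(x)|f_\sigma(h_j x)v(h_j x)$, so that
\[
A_{s,h_j,n_0}(f_\sigma v)(x)=e^{-ibR_{n_0}\circ h_j(x)}Z_j(x), \qquad |Z_j|\le W_j,
\]
using $|v|\le u$. The trivial triangle bound reads $|A_{s,h_1,n_0}(f_\sigma v)+A_{s,h_2,n_0}(f_\sigma v)|\le W_1+W_2$, and the goal is to gain a factor $\eta_0<1$ in front of one of the $W_j$.

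\emph{Easy case.} Suppose there exist $j\in\{1,2\}$ and $y_\ast\in B_{\Delta/|b|}(y_0)$ with $|v(h_j y_\ast)|\le \eta_\ast u(h_j y_\ast)$ for some intermediate threshold $\eta_\ast\in(\eta_0,1)$. The cone inequality $|v(x)-v(y)|\le C_4|b|^\alpha u(y)|x-y|^\alpha$, the log-H\"older bound $|\log u|_\alpha\le C_4|b|^\alpha$, and the branch contraction $|h_j y-h_j y'|\le C_1\rho_0^{n_0}|y-y'|$ from (i) combine to propagate this to $|v\circ h_j|\le \eta_0(u\circ h_j)$ throughout $B_{\delta/|b|}(y_\ast)$, once $\delta$ is chosen small and \eqref{eq-large1} is used to absorb the factor $|b|^\alpha(\Delta/|b|)^\alpha=(4\pi/D)^\alpha$. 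Setting $y_1=y_\ast$ then gives $|Z_j|\le\eta_0 W_j$ on $B_{\delta/|b|}(y_1)$, and Case~$h_j$ follows from the triangle inequality applied to the other term.

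\emph{Oscillatory case.} Otherwise $|v(h_j y)|>\eta_\ast u(h_j y)$ for every $y\in B_{\Delta/|b|}(y_0)$ and $j=1,2$, so $|Z_j|\ge\eta_\ast W_j$ on the ball. The cone inequality then forces $y\mapsto\arg v(h_j y)$ to be H\"older of constant $O(C_4|b|^\alpha/\eta_\ast)$ at the scale of $h_j y$, which, after the contraction $\rho^{n_0}$ from (i), translates to a slow variation of $\arg Z_j$ in $y$. Meanwhile UNI gives $|(b\psi)'|\ge|b|D$ for $\psi=R_{n_0}\circ h_1-R_{n_0}\circ h_2$, and since $\Delta=2\pi/D$ the interval $B_{\Delta/|b|}(y_0)$ has length $4\pi/(D|b|)$, so $b\psi$ sweeps a range of at least $4\pi$. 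Hence there is $y_1\in B_{\Delta/|b|}(y_0)$ at which the two summands $e^{-ibR_{n_0}\circ h_j(y_1)}Z_j(y_1)$ are exactly antiparallel. Shrinking $\delta$, the antipodality persists on $B_{\delta/|b|}(y_1)$ to within $\pi/12$: the fast phase $b\psi$ varies by $O(\delta)$ on the ball (using (iii$_1$)), and the slow variation is absorbed in the slack provided by \eqref{eq-large1}. The elementary identity $|z_1+z_2|^2=|z_1|^2+|z_2|^2+2|z_1||z_2|\cos\theta$ with $\cos\theta\le-\cos(\pi/12)$, combined with $|Z_j|\ge\eta_\ast W_j$, yields
\[
|A_{s,h_1,n_0}(f_\sigma v)+A_{s,h_2,n_0}(f_\sigma v)|^2\le W_1^2+W_2^2-2\eta_\ast^2\cos(\pi/12)\,W_1 W_2
\]
on $B_{\delta/|b|}(y_1)$. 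A short algebraic check, using the defining relation $\eta_0^2+\eta_0=3/2$ of $\eta_0=(\sqrt7-1)/2$, shows that the right-hand side is dominated by $(\eta_0 W_j+W_{3-j})^2$ for the index $j$ with the smaller $W_j$, establishing the corresponding Case~$h_j$.

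The main obstacle is the bookkeeping between $C_4$, $\eta_\ast$, and $\delta$: because $\log|h'|$ is merely $\alpha$-H\"older rather than Lipschitz as in Baladi--Vall\'ee, several of the slow-phase variations come weighted by $|b|^\alpha$ instead of $|b|$, and these must be absorbed into the cone width $C_4|b|^\alpha$ through the contraction $\rho^{n_0}$ --- this is precisely what conditions \eqref{eq-large1}--\eqref{eq-large3} are calibrated for. A secondary subtlety is that the anti-alignment at $y_1$ is imposed on the \emph{full} relative phase, including $\arg Z_1-\arg Z_2$ rather than only $b\psi$, which is why the sweep of $b\psi$ on $B_{\Delta/|b|}(y_0)$ must be $4\pi$ rather than $2\pi$.
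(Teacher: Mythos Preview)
Your plan follows the same Dolgopyat dichotomy as the paper's proof: either $|v\circ h_m|$ is already small relative to $u\circ h_m$ for some branch (Case~1), or UNI-driven phase cancellation does the work (Case~2). The paper uses threshold $\tfrac12$ checked only at the single point $y_0$; you use $\eta_\ast\in(\eta_0,1)$ checked throughout $B_{\Delta/|b|}(y_0)$, which strengthens the Case~2 hypothesis so that you may invoke the lower bound $|Z_j|\ge\eta_\ast W_j$ in your explicit cosine-law estimate. The paper instead quotes \cite[Lemma~2.3]{BaladiVallee05} (the inequality $|r_1e^{i\theta_1}+r_2e^{i\theta_2}|\le\max\{\eta_0 r_1+r_2,\,r_1+\eta_0 r_2\}$ whenever $\cos\theta\le\tfrac12$, valid for \emph{any} $r_1,r_2\ge0$), so it needs no lower bound on the moduli and only the much weaker target $|\theta-\pi|\le 2\pi/3$.

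There is one technical slip in your localisation of $y_1$. You assert that $B_{\Delta/|b|}(y_0)$ has length $4\pi/(D|b|)$ so that $b\psi$ sweeps $4\pi$; but $Y=[0,1]$, so for $y_0$ near $\partial Y$ the intersection $B_{\Delta/|b|}(y_0)\cap Y$ may have length only $\Delta/|b|$, and then $b\psi$ sweeps only $2\pi$. With the slow phase $V=\arg Z_1-\arg Z_2$ varying as well, a $2\pi$ sweep of $b\psi$ does \emph{not} guarantee that the full phase $\theta=V-b\psi$ attains $\pi$ exactly, so your ``exactly antiparallel'' $y_1$ need not exist. The paper sidesteps this: rather than seek $\theta(y_1)=\pi$, it picks $y_1$ so that $b(\psi(y_1)-\psi(y_0))\equiv\theta(y_0)-\pi\pmod{2\pi}$, which requires only a $2\pi$ sweep of $b\psi$ alone and forces $\theta(y_1)-\pi=V(y_1)-V(y_0)$, hence $|\theta(y_1)-\pi|\le\pi/6$ by the estimate on $V$. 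Propagating over $B_{\delta/|b|}(y_1)$ then gives $|\theta-\pi|\le2\pi/3$, i.e.\ $\cos\theta\le\tfrac12$, and the Baladi--Vall\'ee lemma finishes. Your argument is easily repaired by adopting this trick for locating $y_1$ and relaxing the $\pi/12$ target; alternatively, note that with your stronger Case~2 hypothesis the bound on $|V(y)-V(y_0)|$ is tighter than $\pi/6$, so approximate anti-alignment plus small $\delta$ still gives $\cos\theta<0$, and your cosine-law estimate goes through.
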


\begin{proof}
Choose $\delta>0$ sufficiently small that
\[
C_1^\alpha C_4\delta^\alpha<{\SMALL\frac16}, \quad
{\SMALL\frac23} e^{C_1^\alpha C_4\delta^\alpha}<\eta_0,
\quad \delta<{\SMALL\frac{2\pi}{D}}, \quad 2C_2\delta<{\SMALL\frac{\pi}{6}}.
\]

By~(i), if $y\in B_{\delta/|b|}(y_0)$, then
$|h_my-h_my_0|\le C_1\rho_0^{n_0}\delta/|b|$ for $m=1,2$.
Hence
\begin{align} \label{eq-v1}
 |v(h_my)-v(h_my_0)|
  & \le C_4|b|^\alpha|u(h_my_0)||h_my-h_my_0|^\alpha
\\ & 
 \le C_1^\alpha C_4\delta^\alpha u(h_my_0) 
 \le {\SMALL\frac16}u(h_my_0), \quad m=1,2.
\nonumber
\end{align}
Also, for $y\in B_{\delta/|b|}(y_0)$, 
\[
|\log u(h_my_0)-\log u(h_my)|\le C_4|b|^\alpha|h_my_0-h_my|^\alpha\le C_1^\alpha C_4\delta^\alpha,
\]
and so
\begin{align} \label{eq-u}
{\SMALL \frac23} u(h_my_0)\le 
{\SMALL \frac23} e^{C_1^\alpha C_4\delta^\alpha}u(h_my)\le \eta_0u(h_my).
\end{align}
Similarly, given $\xi\in(0,4\pi/D)$, using~\eqref{eq-large1} we have that
for all $y\in B_{\xi/|b|}(y_0)$,
\begin{align} 
\label{eq-v2}
 |v(h_my)-v(h_my_0)|
  & \le C_1^\alpha C_4\rho^{n_0}(4\pi/D)^\alpha u(h_my_0) \\
  & \le {\SMALL\frac14}(2-2\cos{\SMALL\frac{\pi}{12}})^{1/2}u(h_my_0)
   \le {\SMALL\frac14}u(h_my_0), \quad m=1,2.
\nonumber
\end{align}

\noindent{\bf Case 1.}
Suppose that
$|v(h_my_0)|\le \frac12 u(h_my_0)$ for $m=1$ or $m=2$.
Then for $y\in B_{\delta/|b|}(y_0)$, using~\eqref{eq-v1} and~\eqref{eq-u},
\begin{align*}
|v(h_my)| & \le |v(h_my_0)|+|v(h_my)-v(h_my_0)|
\\ & \le {\SMALL\frac12} u(h_my_0)+{\SMALL\frac16}u(h_my_0)
= {\SMALL\frac23} u(h_my_0)
\le \eta_0 u(h_my).
\end{align*}
Hence $|A_{s,h_m,n_0}(f_\sigma v)(y)|\le
\eta_0 A_{\sigma,h_m,n_0}(f_\sigma u)(y)$ for all $y\in B_{\delta/|b|}(y_0)$
and Case~$h_m$ holds with $y_1=y_0$.

\vspace{1ex}
\noindent{\bf Case 2.}
It remains to consider the situation where
$|v(h_my_0)|> \frac12 u(h_my_0)$ for both $m=1$ and $m=2$.

Write $A_{s,h_m,n_0}(f_\sigma v)(y)=r_m(y)e^{i\theta_m(y)}$ for
$m=1,2$ and let $\theta(y)=\theta_1(y)-\theta_2(y)$.
Choose $\delta>0$ as above and $\Delta=2\pi/D$.
An elementary calculation~\cite[Lemma~2.3]{BaladiVallee05} shows that
if $\cos\theta\le\frac12$ then
$r_1e^{i\theta_1} +r_2e^{i\theta_2}\le \max\{\eta_0 r_1+r_2,
r_1+\eta_0 r_2\}$ and we are finished.
So it remains to show that $\cos\theta(y)\le\frac12$ for
all $y\in B_{\delta/|b|}(y_1)$ for some $y_1\in B_{\Delta/|b|}(y_0)$.
Equivalently, we must show that $|\theta(y)-\pi|\le 2\pi/3$.
Throughout, it suffices to restrict to
$y\in B_{\xi/|b|}(y_0)$
where $\xi= \delta+\Delta<2\Delta=4\pi/D$.

Note that $\theta=V-b\psi$ where $\psi=\psi_{h_1,h_2}$ and
$V=\arg(v\circ h_1)-\arg(v\circ h_2)$.
We begin by estimating $V(y)-V(y_0)$ for
$y\in B_{\xi/|b|}(y_0)$.
For this, it is useful to note by basic trigonometry that if $|z_1|,|z_2|\ge c$ and $|z_1-z_2|\le c(2-2\cos\omega)^{1/2}$ where $c>0$ and $|\omega|<\pi$,
then $|\arg(z_1)-\arg(z_2)|\le\omega$.
For $m=1,2$,
\begin{align} \label{eq-arg1}
|v(h_my)-v(h_my_0)|
\le {\SMALL\frac14}u(h_my_0)(2-2\cos{\SMALL\frac{\pi}{12}})^{1/2},
\end{align}
by~\eqref{eq-v2}.
Using in addition that we are in Case~2,
\begin{align} \label{eq-arg2}
\nonumber
|v(h_my)| & \ge |v(h_my_0)|-|v(h_my_0)-v(h_my)|
\\ & \ge {\SMALL\frac12}u(h_my_0)- 
 {\SMALL\frac14}u(h_my_0)= 
 {\SMALL\frac14}u(h_my_0).  
\end{align}
It follows from~\eqref{eq-arg1} and~\eqref{eq-arg2} that
$|\arg(v(h_my))-\arg(v(h_my_0))|\le \pi/12$.
We conclude that
\begin{align} \label{eq-V}
|V(y)-V(y_0)|\le \pi/6.
\end{align}

By (UNI),
\[
|b(\psi(z)-\psi(y_0))|\ge |b||z-y_0|\inf|\psi'|\ge D|b||z-y_0|=(2\pi/\Delta)|b||z-y_0|.
\]
Since $|b|>4\pi/D$, the interval
$B_{\Delta/|b|}(y_0)\subset Y$ contains an interval of length at least $\Delta/|b|$,
so it follows that $b(\psi(z)-\psi(y_0))$ fills out an interval around $0$ of length at least $2\pi$ as $z$ varies in $B_{\Delta/|b|}(y_0)$.
In particular, we can choose $y_1\in B_{\Delta/|b|}(y_0)$ such that
\[
b(\psi(y_1)-\psi(y_0))=\theta(y_0)-\pi\, \bmod 2\pi.
\]
Hence
\[
\theta(y_1)-\pi=V(y_1)-b\psi(y_1)-\pi+\theta(y_0)-V(y_0)+b\psi(y_0)=V(y_1)-V(y_0),
\]
so by~\eqref{eq-V},
$|\theta(y_1)-\pi|\le \pi/6$.
It follows from (iii$_1$) that $|\psi'|_\infty\le 2C_2$.
Hence for $y\in B_{\delta/|b|}(y_1)$,
\begin{align*}
|\theta(y)-\pi| & \le \pi/6+|\theta(y)-\theta(y_1)|
\\ & \le \pi/6+|b||\psi(y)-\psi(y_1)|+|V(y)-V(y_0)|+|V(y_1)-V(y_0)|
\\ & \le \pi/6+2C_2\delta+\pi/6+\pi/6\le 2\pi/3,
\end{align*}
as required.
\end{proof}

For each choice of $y_0$ in Lemma~\ref{lem-cancel} we let $I$ denote a closed
interval containing $B_{\delta/|b|}(y_1)$ on which the conclusion of the lemma holds.
Write $\type(I)=h_m$ if we are in case $h_m$.
Then we can find finitely many disjoint intervals $I_j=[a_j,b_{j+1}]$, $j=0,\dots,N-1$, (where $0=b_0\le a_0<b_1<a_1<\dots<b_N\le a_N=1$)
of $\type(I_j)\in\{h_1,h_2\}$ with $\diam(I_j)\in[\delta/|b|,2\delta/|b|]$ and gaps
$J_j=[b_j,a_j]$, $j=0,\dots,N$ with $0<\diam(J_j)\le 2\Delta/|b|$.

Let $\eta\in[\eta_0,1)$ and define $\chi:Y\to[\eta,1]$ as follows:
\begin{itemize}
\item Set $\chi\equiv1$ on $Y\setminus(\range(h_1)\cup\range(h_2))$.
\item On $\range(h_1)$, 
we require that $\chi(h_1(y))=\eta$
for all $y$ lying in the middle-third of an interval of type $h_1$ and
that $\chi(h_1(y))=1$ for all $y$ not lying in an interval of type $h_1$.
\item Similarly, on $\range(h_2)$, we require that $\chi(h_2(y))=\eta$
for all $y$ lying in the middle-third of an interval of type $h_2$ and
that $\chi(h_2(y))=1$ for all $y$ not lying in an interval of type $h_2$.
\end{itemize}
Since $\diam(I_j)\ge \delta/|b|$, we can choose $\chi$ to be $C^1$ with $|\chi'|\le
{\BIG\frac{3(1-\eta)|b|}{\delta P}}$ where
$P=\min_{m=1,2}\{\inf|h_m'|\}$.
From now on, we choose $\eta\in[\eta_0,1)$ sufficiently close to $1$ that
$|\chi'|\le |b|$.

\begin{cor} \label{cor-cancel}
Let $\delta$, $\Delta$ be as in Lemma~\ref{lem-cancel}.
Let $|b|>4\pi/D$, $(u,v)\in\cC_b$.
Let $\chi=\chi(b,u,v)$ be the $C^1$ function described above (using the
branches $h_1,h_2\in\cH_{n_0}$ from (UNI)).
Then $|L_s^{n_0}v|\le 
L_\sigma^{n_0}(\chi u)$ for all $s=\sigma+ib$, $|\sigma|<\eps$.
\end{cor}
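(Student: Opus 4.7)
The plan is to cancel the common positive factor $\lambda_\sigma^{-n_0}f_\sigma^{-1}$ on both sides and reduce to a pointwise comparison of sums of the base operators $A_{s,h,n_0}$. Writing
\[
L_s^{n_0}v = \lambda_\sigma^{-n_0}f_\sigma^{-1}\sum_{h\in\cH_{n_0}}A_{s,h,n_0}(f_\sigma v),\quad L_\sigma^{n_0}(\chi u) = \lambda_\sigma^{-n_0}f_\sigma^{-1}\sum_{h\in\cH_{n_0}}\chi(h(\cdot))A_{\sigma,h,n_0}(f_\sigma u),
\]
and using $\lambda_\sigma,f_\sigma>0$, the claim is equivalent to the pointwise estimate
\[
\Bigl|\sum_{h\in\cH_{n_0}}A_{s,h,n_0}(f_\sigma v)(y)\Bigr|\le \sum_{h\in\cH_{n_0}}\chi(h(y))\,A_{\sigma,h,n_0}(f_\sigma u)(y),\quad y\in Y.
\]
I would then split the sum into the distinguished pair $h_1,h_2$ from (UNI) and the remaining branches, and handle the two pieces separately.

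For $h\in\cH_{n_0}\setminus\{h_1,h_2\}$, the key remark is that distinct inverse branches of $F^{n_0}$ have pairwise disjoint ranges mod zero, so $h(Y)$ misses $\range(h_1)\cup\range(h_2)$ and therefore $\chi\circ h\equiv 1$ by construction. The pointwise bound $|A_{s,h,n_0}(f_\sigma v)|\le A_{\sigma,h,n_0}(f_\sigma u)$ is then immediate from the cone inequality $|v|\le u$. Applying the triangle inequality and subtracting these trivial contributions, the problem reduces to
\[
|A_{s,h_1,n_0}(f_\sigma v)(y)+A_{s,h_2,n_0}(f_\sigma v)(y)|\le \chi(h_1(y))\,A_{\sigma,h_1,n_0}(f_\sigma u)(y)+\chi(h_2(y))\,A_{\sigma,h_2,n_0}(f_\sigma u)(y).
\]

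For this inequality I would use the decomposition of $Y$ into the disjoint intervals $I_j$ and gaps $J_j$ constructed after Lemma~\ref{lem-cancel}. On a gap $J_j$, both $\chi\circ h_1$ and $\chi\circ h_2$ equal $1$, and the bound is immediate from the triangle inequality together with $|v|\le u$. On a type-$h_1$ interval $I_j$, Lemma~\ref{lem-cancel} supplies
\[
|A_{s,h_1,n_0}(f_\sigma v)+A_{s,h_2,n_0}(f_\sigma v)|\le \eta_0 A_{\sigma,h_1,n_0}(f_\sigma u)+A_{\sigma,h_2,n_0}(f_\sigma u);
\]
since $y\in I_j$ lies outside every type-$h_2$ interval we have $\chi(h_2(y))=1$, and by construction $\chi\circ h_1\ge \eta\ge \eta_0$ throughout $I_j$, so this bound is dominated by the desired right-hand side. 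The type-$h_2$ intervals are treated symmetrically.

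I do not anticipate any genuine obstacle; the whole argument is essentially bookkeeping. The one point requiring care is the compatibility of the definition of $\chi$—which drops to $\eta$ only on middle thirds of type-$h_m$ intervals and interpolates back to $1$ at their boundaries—with the region decomposition from Lemma~\ref{lem-cancel}. Specifically, one must verify that $\chi\circ h_m$ never drops below $\eta_0$ on a type-$h_m$ interval and equals $1$ everywhere else, both of which are built into the construction of $\chi$. The $C^1$ regularity and the explicit bound on $|\chi'|$ play no role in the present corollary and will only be invoked downstream when estimating $\|L_\sigma^{n_0}(\chi u)\|_b$.
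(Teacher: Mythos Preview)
Your argument is correct and is exactly the verification the paper leaves implicit: the corollary is stated without proof, as the construction of $\chi$ and the case analysis of Lemma~\ref{lem-cancel} make the inequality immediate. Your write-up faithfully unpacks the intended bookkeeping---cancelling the positive prefactor, using $|v|\le u$ and $\chi\circ h\equiv 1$ for $h\notin\{h_1,h_2\}$, and invoking the type-$h_m$ cancellation on the intervals $I_j$---with no gaps.
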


Let $\hat I=\bigcup_{j=0}^{N-1}\hat I_j$ where $\hat I_j$ denotes the middle-third of $I_j$.
Each gap $J_j$, $j=1\dots,N-1$, lies between two intervals $I_{j-1}$ and $I_j$.
Let $\hat J_j$ be the interval consisting of $J_j$ together with the rightmost third of $I_{j-1}$ and the leftmost third of $I_j$.
(Also we define $\hat J_0$ and $\hat J_N$ with the obvious modifications.)
Set $\hat J=\bigcup_{j=0}^N \hat J_j$.
Then $Y=\hat I\cup \hat J$.
By construction, $\diam(\hat I_j)\ge \frac13 \delta/|b|$ and 
$\diam(\hat J_j)\le (\frac43 \delta+2\Delta)/|b|$.
In particular, there is a constant $\delta'=\delta/(4\delta+6\Delta)>0$
(independent of $b$) such that
$\diam(\hat I_j)\ge\delta' \diam(\hat J_j)$ for $j=0,\dots,N-1$.
Since $d\mu/d\Leb$ is bounded above and below, there is a constant $\delta''>0$ such that
$\mu(\hat I_j)\ge \delta''\mu(\hat J_j)$.

\begin{prop} \label{prop-fed}
Suppose that $w>0$ is a $C^\alpha$ function with $|\log w|_\alpha\le K|b|^\alpha$.
Then 
$\int_{\hat I}w\,d\mu\ge \delta'''
\int_{\hat J}w\,d\mu$,
where $\delta'''=\frac12\delta''\exp\{-(2\delta+2\Delta)^\alpha K\}$.
\end{prop}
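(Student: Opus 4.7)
The plan is to combine the Hölder control on $\log w$—which restricts the multiplicative oscillation of $w$ across any interval of diameter $O(1/|b|)$ to a bounded factor—with the measure comparison $\mu(\hat I_j)\ge\delta''\mu(\hat J_k)$ established just before the statement, via a pairing of each gap with an adjacent middle-third interval. Concretely, because $\hat J_k$ consists of $J_k$ together with the outer thirds of its two neighbouring $I_\ell$, if $\hat I_j$ shares a boundary with $\hat J_k$ then any $x\in\hat I_j$ and $y\in\hat J_k$ lie at distance at most $(2\delta+2\Delta)/|b|$, so the hypothesis $|\log w|_\alpha\le K|b|^\alpha$ gives
\[
|\log w(x)-\log w(y)|\le K|b|^\alpha\bigl((2\delta+2\Delta)/|b|\bigr)^\alpha=K(2\delta+2\Delta)^\alpha,
\]
and hence $w(x)\ge e^{-K(2\delta+2\Delta)^\alpha}w(y)$ for all such $x,y$.

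Next I would define the pairing $\iota(j)=j$ for $j=0,\dots,N-1$ and $\iota(N)=N-1$, so that $\hat I_{\iota(j)}$ is always adjacent to $\hat J_j$ and each middle-third interval appears as $\iota(j)$ at most twice (only $\hat I_{N-1}$ is used twice). For every such pair, replacing $w$ by $\inf_{\hat I_{\iota(j)}} w$ and by $\sup_{\hat J_j} w$ and combining the pointwise estimate above with $\mu(\hat I_{\iota(j)})\ge\delta''\mu(\hat J_j)$ yields
\[
\int_{\hat I_{\iota(j)}}w\,d\mu\ge \delta''\,e^{-K(2\delta+2\Delta)^\alpha}\int_{\hat J_j}w\,d\mu.
\]
Summing over $j=0,\dots,N$ and absorbing the at-most-twofold multiplicity of the pairing into a factor of $2$ produces the required inequality with $\delta'''=\tfrac12\delta''\exp\{-(2\delta+2\Delta)^\alpha K\}$.

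The only delicate point is the index mismatch—$N+1$ gaps against $N$ middle-third intervals—which forces a single reuse in the pairing; this doubling is precisely what the prefactor $\tfrac12$ in $\delta'''$ absorbs. Nothing else in the argument is subtle: the scaling $|\log w|_\alpha\le K|b|^\alpha$ is calibrated so that the oscillation of $w$ is $O(1)$ on each $\hat I_{\iota(j)}\cup\hat J_j$, which is exactly enough to treat $w$ as locally constant at the scale $1/|b|$.
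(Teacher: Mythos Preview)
Your proof is correct and essentially the same as the paper's: the paper pairs $\hat I_j$ with $\hat J_j$ for $j=0,\dots,N-1$, observes that $|x-y|\le(2\delta+2\Delta)/|b|$ for $x\in\hat I_j$, $y\in\hat J_j$ to get the oscillation bound, and then remarks that the factor $\tfrac12$ ``takes care of the extra interval $\hat J_N$''. You have simply made the last step explicit via the pairing $\iota$ and the observation that $\hat I_{N-1}$ is used twice.
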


\begin{proof}
Let $x\in \hat I_j$, $y\in \hat J_j$.
Then $|x-y|\le (2\delta+2\Delta)/|b|$
and so $|w(x)/w(y)|\le e^{K'}$ where
$K'= (2\delta+2\Delta)^\alpha K$.
It follows that
\begin{align*}
\int_{\hat I_j}w\,d\mu & \ge\mu(\hat I_j)\inf_{\hat I_j} w \ge
 \delta'' e^{-K'} \mu(\hat J_j)\sup_{\hat J_j} w 
=2 \delta'''  \mu(\hat J_j)\sup_{\hat J_j} w 
\ge2 \delta'''\int_{\hat J_j}w\,d\mu,
\end{align*}
and the result follows.
(The factor $2$ takes care of the extra interval $\hat J_N$).
\end{proof}

\subsection{Invariance of cone condition}

\begin{lemma} \label{lem-cone}
There is a constant $C_4$ depending only on $C_1$, $C_2$, $|f_0^{-1}|_\infty$
and $|f_0|_\alpha$ such that for
$n_0$ satisfying~\eqref{eq-large2} the following holds:
 
For all $(u,v)\in \cC_b$, we have that
\[
\bigl(\,L_\sigma^{n_0}(\chi u)\,,\,L_s^{n_0}v\,\bigr)\in \cC_b,
\]
for all $s=\sigma+ib$, $|\sigma|<\eps$, $|b|\ge1$.
(Here, $\chi=\chi(b,u,v)$ is from Corollary~\ref{cor-cancel}.)
\end{lemma}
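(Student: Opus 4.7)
Write $U=L_\sigma^{n_0}(\chi u)$ and $V=L_s^{n_0}v$. The cone $\cC_b$ has four defining conditions. Two are essentially free: positivity $U>0$ follows from $\chi\ge\eta_0>0$, $u>0$, $f_\sigma>0$ and positivity of the transfer operator; the domination $|V|\le U$ is exactly the statement of Corollary~\ref{cor-cancel}. The real content is to verify the two Hölder-type conditions $|\log U|_\alpha\le C_4|b|^\alpha$ and $|V(x)-V(y)|\le C_4|b|^\alpha U(y)|x-y|^\alpha$.

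For the first, decompose $U(y)=\sum_{h\in\cH_{n_0}}U_h(y)$ where the summands are $U_h(y)=\lambda_\sigma^{-n_0}f_\sigma(y)^{-1}e^{-\sigma R_{n_0}(hy)}|h'(y)|f_\sigma(hy)\chi(hy)u(hy)$. Since each $U_h$ is positive, $U(x)/U(y)$ is a convex combination of the individual ratios $U_h(x)/U_h(y)$, so it suffices to bound each $|\log U_h(x)-\log U_h(y)|$ by $C_4|b|^\alpha|x-y|^\alpha$. The contributions from $f_\sigma^{-1}$, $f_\sigma\circ h$, $|h'|$ and $e^{-\sigma R_{n_0}(h\cdot)}$ each give a $|x-y|^\alpha$ Hölder term with constants controlled by Remark~\ref{rmk-P}, (ii$_1$) and (iii$_1$); call the total $K_1$. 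The $u\circ h$ factor contributes at most $C_1^\alpha C_4\rho^{n_0}|b|^\alpha|x-y|^\alpha$ via the cone bound $|\log u|_\alpha\le C_4|b|^\alpha$ and $|hx-hy|^\alpha\le C_1^\alpha\rho^{n_0}|x-y|^\alpha$. The $\chi\circ h$ factor is delicate: $\chi$ is only $C^1$ with $|\chi'|\le|b|$, but combining this with $\chi\ge\eta_0$ and the elementary inequality $\min(|b|t,1)\le|b|^\alpha t^\alpha$ (valid for $\alpha\in(0,1]$, $t\ge0$) converts it into a $|b|^\alpha$-Hölder estimate with constant $\le C_1^\alpha\rho^{n_0}/\eta_0$. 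Adding up and using $|b|^\alpha\ge1$ yields
\[
|\log U|_\alpha\le\bigl(K_1+C_1^\alpha(\eta_0^{-1}+C_4)\rho^{n_0}\bigr)|b|^\alpha.
\]
Taking $C_4$ large relative to $K_1$ and invoking~\eqref{eq-large2} (which forces $C_1^\alpha C_4\rho^{n_0}\le 1/2$) closes the bound.

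For the second, write
\[
V(x)-V(y)=\lambda_\sigma^{-n_0}\bigl\{f_\sigma(y)^{-1}(A(x)-A(y))+A(x)\bigl(f_\sigma(x)^{-1}-f_\sigma(y)^{-1}\bigr)\bigr\},
\]
with $A=\sum_{h\in\cH_{n_0}}A_{s,h,n_0}(f_\sigma v)$. Decompose each summand of $A(x)-A(y)$ using the four-term expansion $J_1+J_2+J_3+J_4$ from the proof of Lemma~\ref{lem-LY}, but replace the estimates $|v|_\infty$ and $|v|_\alpha$ by the pointwise cone information: use $|v\circ h|\le u\circ h$ for $J_1,J_2,J_3$ (and convert $u\circ h$ into a bound in terms of $u(hy)$ via $|\log u|_\alpha\le C_4|b|^\alpha$, which costs only a uniformly bounded factor thanks to~\eqref{eq-large2}), and use $|v(hx)-v(hy)|\le C_1^\alpha C_4\rho^{n_0}|b|^\alpha u(hy)|x-y|^\alpha$ for $J_4$. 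Summing over $h$ and using
\[
\sum_{h\in\cH_{n_0}}A_{\sigma,h,n_0}(f_\sigma u)(y)=\lambda_\sigma^{n_0}f_\sigma(y)L_\sigma^{n_0}u(y)\le\eta_0^{-1}\lambda_\sigma^{n_0}f_\sigma(y)U(y)
\]
(where the inequality invokes $\chi\ge\eta_0$) produces a bound of the form
\[
|V(x)-V(y)|\le\bigl(K_2+K_3 C_4\rho^{n_0}\bigr)|b|^\alpha U(y)|x-y|^\alpha,
\]
with $K_2,K_3$ depending only on $C_1$, $C_2$, $|f_0|_\alpha$ and $|f_0^{-1}|_\infty$. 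Once more~\eqref{eq-large2} absorbs the $C_4\rho^{n_0}$ term, and choosing $C_4$ large relative to $K_2$ (consistently with the standing requirement $C_4\ge 6C_3$) finishes the argument.

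The main obstacle is the self-referential character of the cone: both target bounds carry the constant $C_4$, and the cone hypothesis on $(u,v)$ already injects a $C_4$ into the right-hand side through the $u$-Hölder term and the $v$-oscillation term. The structural point that makes the argument close is that those $C_4$ contributions always appear paired with the contraction factor $\rho^{n_0}$; the explicit smallness~\eqref{eq-large2} is calibrated precisely so that $C_4\rho^{n_0}$ is dominated by a fixed fraction of $C_4$, leaving a residual that depends only on the fixed data $C_1,C_2,|f_0|_\alpha,|f_0^{-1}|_\infty$. A secondary technical point is bridging the $|b|$-Lipschitz control of $\chi$ to a $|b|^\alpha$-Hölder control on the same footing as the other factors, which is where the $\min(|b|t,1)\le|b|^\alpha t^\alpha$ inequality is essential.
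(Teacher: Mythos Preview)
Your approach is essentially the paper's: the two ``free'' conditions, the summand-by-summand log-ratio estimate for $U$ (including the $\min\{|b|t,1\}\le|b|^\alpha t^\alpha$ trick for $\chi$), and the $J_1+\cdots+J_4$ expansion for $V$ with pointwise cone bounds and the use of $\chi\ge\eta_0$ to pass from $L_\sigma^{n_0}u$ to $U$ are all exactly what the paper does, and your diagnosis of the self-referential role of $C_4\rho^{n_0}$ and of~\eqref{eq-large2} is spot on.

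One small technical point: in your decomposition
\[
V(x)-V(y)=\lambda_\sigma^{-n_0}\bigl\{f_\sigma(y)^{-1}\bigl(A(x)-A(y)\bigr)+A(x)\bigl(f_\sigma(x)^{-1}-f_\sigma(y)^{-1}\bigr)\bigr\},
\]
the second term carries $A(x)$, which naturally bounds in terms of $U(x)$ rather than $U(y)$; converting $U(x)$ to $U(y)$ via $|\log U|_\alpha\le C_4|b|^\alpha$ costs a factor that grows with $|b|$. The paper avoids this by writing the second piece with $A(y)$ instead (equivalently, $I_2=(f_\sigma(y)f_\sigma(x)^{-1}-1)V(y)$), so that $|V(y)|\le U(y)$ applies directly. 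Your version can also be salvaged by writing $A(x)=A(y)+(A(x)-A(y))$ and reusing the bound you already have on $A(x)-A(y)$, but the paper's ordering is cleaner.
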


\begin{proof}
Let $\hat u=L_\sigma^{n_0}(\chi u)$,
$\hat v=L_s^{n_0}v$.
Since $\chi u\ge \eta u>0$ and $L_\sigma$ is a positive operator, we
have that $\hat u>0$.   The condition $|\hat v|\le \hat u$ follows from
Corollary~\ref{cor-cancel}.
It remains to show that 
$|\log \hat u|_\alpha\le C_4|b|^\alpha$ and that
$|\hat v(x)-\hat v(y)|\le C_4|b|^\alpha \hat u(y)|x-y|^\alpha$.

Now 
\[
\frac{\hat u(x)}{\hat u(y)}=\frac{f_\sigma(y)}{f_\sigma(x)}\frac
{\sum_{h\in\cH_{n_0}}(A_{\sigma,h,n_0}(f_\sigma \chi u))(x)}
{\sum_{h\in\cH_{n_0}}(A_{\sigma,h,n_0}(f_\sigma \chi u))(y)}.
\]
Note that $|\log f_\sigma|_\alpha\le 4|f_0^{-1}|_\infty|f_0|_\alpha|x-y|^\alpha$.
Hence $f_\sigma(y)/f_\sigma(x)\le \exp\{4|f_0^{-1}|_\infty|f_0|_\alpha|x-y|^\alpha\}$.

Recall that $\chi\in[\frac12,1]$ and $|\chi'|\le|b|$.
Hence $|(\log\chi)'|\le 2|b|$ so that
$|\log \chi(x)-\log\chi(y)|\le 2|b||x-y|$.
Also, since $\chi\in[\frac12,1]$, we have
$|\log \chi(x)-\log\chi(y)|\le \log 2<1$.
Hence
\[
|\log \chi(x)-\log\chi(y)|\le 2\min\{1,|b||x-y|\}\le
2|b|^\alpha|x-y|^\alpha.
\]

We compute that
\begin{align*}
& \Bigl|\frac{(A_{\sigma,h,n_0}(\chi u))(x)}{(A_{\sigma,h,n_0}(\chi u))(y)}\Bigr|
  =\Bigl|\frac{e^{-\sigma R_{n_0}\circ h(x)}}{e^{-\sigma R_{n_0}\circ h(y)}}
 \frac{h'x}{h'y} 
\frac{f_\sigma(hx)}{f_\sigma(hy)}
\frac{\chi(hx)}{\chi(hy)} 
\frac{u(hx)}{u(hy)}\Bigr|
\\ & \qquad\qquad \le
\exp\{C_2|x-y|\} \exp\{C_2|x-y|^\alpha\} \exp\{4|f_0^{-1}|_\infty|f_0|_\alpha C_1^\alpha|x-y|^\alpha\}
\\ & \qquad\qquad\qquad\qquad\qquad \qquad \times \exp\{2C_1^\alpha|b|^\alpha|x-y|^\alpha\} \exp\{C_1^\alpha C_4\rho^{n_0}|b|^\alpha|x-y|^\alpha\}.
\end{align*}
Let $C_4=8|f_0^{-1}|_\infty|f_0|_\alpha C_1 +5C_2$
and choose $n_0$ as in~\eqref{eq-large2}.  In particular, $C_1^\alpha C_4\rho^{n_0}< 1\le C_1<C_2$.  Then
\[
\log \frac{\hat u(x)}{\hat u(y)}\le 
(8|f_0^{-1}|_\infty|f_0|_\alpha C_1 +5C_2) |b|^\alpha|x-y|^\alpha=
C_4|b|^\alpha|x-y|^\alpha,
\]
so we obtain that $|\log \hat u|_\alpha\le C_4|b|^\alpha$.

The verification that
$|\hat v(x)-\hat v(y)|\le C_4|b|^\alpha \hat u(y)|x-y|^\alpha$ involves a calculation similar to the one in the proof of Lemma~\ref{lem-LY}, though it is convenient to reorder the terms slightly.
First write
\begin{align*}
\hat v(x)-\hat v(y) & =(L_s^{n_0}v)(x)- (L_s^{n_0}v)(y) = I_1+I_2
\end{align*}
where
\begin{align*}
I_1 & =
\lambda_\sigma^{-n_0}f_\sigma(x)^{-1}\Bigl\{\Bigl(\sum_{h\in\cH_{n_0}}A_{s,h,n_0}(f_\sigma v)\Bigr)(x)-\Bigl(\sum_{h\in\cH_{n_0}}A_{s,h,n_0}(f_\sigma v)\Bigr)(y)\Bigr\},
  \\ I_2 & =(f_\sigma(y)f_\sigma(x)^{-1}-1)\lambda_\sigma^{-n_0}f_\sigma(y)^{-1}
\Bigl(\sum_{h\in\cH_{n_0}}A_{s,h,n_0}(f_\sigma v)\Bigr)(y).
\end{align*}
Now
\begin{align*}
(A_{s,h,n_0}v)(x)-(A_{s,h,n_0}v)(y) & =
(e^{-\sigma R_{n_0}\circ h(x)}-
e^{-\sigma R_{n_0}\circ h(y)})e^{-ibR_{n_0}\circ h(x)}|h'x|v(hx)
\\ & + e^{-\sigma R_{n_0}\circ h(y)}e^{-ibR_{n_0}\circ h(x)}|h'x|(v(hx)-v(hy))
\\ & + e^{-\sigma R_{n_0}\circ h(y)}e^{-ibR_{n_0}\circ h(x)}(|h'x|-|h'y|)v(hy)
\\ & +e^{-\sigma R_{n_0}\circ h(y)}(e^{-ibR_{n_0}\circ h(x)}-e^{-ibR_{n_0}\circ h(y)})|h'y|v(hy) 
\\ & = J_1+J_2+J_3+J_4.
\end{align*}
Since $(u,v)\in\cC_b$ and $|b|\ge1$, it follows from~\eqref{eq-large2} that
\begin{align*}
|v(hx)| & \le 
|v(hy)|+C_4|b|^\alpha u(hy)C_1^\alpha\rho^{n_0}|x-y|^\alpha\le
u(hy)+|b|^\alpha u(hy)
\\ & \le 2|b|^\alpha u(hy).
\end{align*}
Hence using~(iii$_1$) and~\eqref{eq-h'},
\begin{align*}
|J_1| & \le e^{-\sigma R_{n_0}\circ h(y)}C_2|\sigma||x-y||h'x||v(hx)|
\\ &  \le 2|b|^\alpha e^{-\sigma R_{n_0}\circ h(y)}C_2(2C_2+1)|x-y||h'y|u(hy)
\\ &  \le 2C_2(2C_2+1)|b|^\alpha (A_{\sigma,h,n_0}u)(y) |x-y|^\alpha.
\end{align*}
Similarly,
$|J_2|\le (2C_2+1)|b|^\alpha (A_{\sigma,h,n_0}u)(y) |x-y|^\alpha$, 
$|J_3|\le 2C_2(A_{\sigma,h,n_0}u)(y) |x-y|^\alpha$, 
$|J_4|\le 2C_2^\alpha|b|^\alpha (A_{\sigma,h,n_0}u)(y) |x-y|^\alpha$.
Altogether, 
\[
| (A_{s,h,n_0}v)(x)-(A_{s,h,n_0}v)(y)|\le 
C'|b|^\alpha (A_{\sigma,h,n_0}u)(y) |x-y|^\alpha.
\]
Since $|f_\sigma|_\infty\le 2|f_0|_\infty$,
$|f_\sigma^{-1}|_\infty\le 2|f_0^{-1}|_\infty$ 
and $\chi\ge\frac12$, it follows that
\[
| (A_{s,h,n_0}(f_\sigma v))(x)-(A_{s,h,n_0}(f_\sigma v))(y)|\le 
8C'|f_0|_\infty|f_0^{-1}|_\infty|b|^\alpha (A_{\sigma,h,n_0}(f_\sigma \chi u))(y) |x-y|^\alpha.
\]
Hence
\begin{align*}
|I_1| & \le 4|f_0|_\infty|f_0^{-1}|_\infty\lambda_\sigma^{-n_0}f_\sigma(y)^{-1}
\sum_{h\in\cH_{n_0}}
| (A_{s,h,n_0}(f_\sigma v))(x)-(A_{s,h,n_0}(f_\sigma v))(y)| \\
& \le 32C'|f_0|_\infty^2|f_0^{-1}|_\infty^2|b|^\alpha \hat u(y)|x-y|^\alpha.
\end{align*}
A simpler calculation gives
\begin{align*}
|I_2| & \le |f_\sigma(y)f_\sigma(x)^{-1}-1|(L_\sigma^{n_0}(|v|))(y)
 \le 2|f_\sigma(y)f_\sigma(x)^{-1}-1|(L_\sigma^{n_0}(\chi u))(y)
\\ &  \le 8|f_0^{-1}|_\infty|f_0|_\alpha|x-y|^\alpha \hat u(y).
\end{align*}
Hence
$|\hat v(x)-\hat v(y)| \le C_4|b|^\alpha \hat u(y)|x-y|^\alpha$
as required.
\end{proof}

\subsection{$L^2$ contraction}

\begin{lemma} \label{lem-L2}
There exist $\eps,\beta\in(0,1)$ such that
\[
\int |L_s^{mn_0}v|^2\,d\mu\le \beta^m|v|_\infty^2
\]
for all $m\ge1$, $s=\sigma+ib$, $|\sigma|<\eps$, $|b|\ge\max\{4\pi/D,1\}$, and all $v\in C^\alpha(Y)$ satisfying $|v|_\alpha\le C_4|b|^\alpha|v|_\infty$.
\end{lemma}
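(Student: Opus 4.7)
The plan is to iterate the cone invariance of Lemma~\ref{lem-cone}, apply Cauchy--Schwarz for the Markov operator $L_\sigma^{n_0}$, and extract a uniform contraction factor via Proposition~\ref{prop-fed}.

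The hypothesis $|v|_\alpha\le C_4|b|^\alpha|v|_\infty$ says exactly that $(|v|_\infty\mathbf{1},v)\in\cC_b$, the constant majorant having vanishing log-$\alpha$-seminorm. Iterating Corollary~\ref{cor-cancel} and Lemma~\ref{lem-cone}, set $u_0=|v|_\infty\mathbf{1}$ and
\[
u_m=L_\sigma^{n_0}(\chi_m u_{m-1}),\qquad \chi_m=\chi(b,u_{m-1},L_s^{(m-1)n_0}v),
\]
so that $(u_m,L_s^{mn_0}v)\in\cC_b$ and $|L_s^{mn_0}v|^2\le u_m^2$ for all $m\ge 1$. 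It therefore suffices to show $\int u_m^2\,d\mu\le\beta^m|v|_\infty^2$.

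For the one-step contraction, note that $L_\sigma^{n_0}\mathbf{1}=\mathbf{1}$ (because $f_\sigma$ is the $\lambda_\sigma$-eigenfunction of $P_\sigma$), so $L_\sigma^{n_0}$ is a Markov operator with kernel $p_h=(\lambda_\sigma f_\sigma)^{-1}e^{-\sigma R_{n_0}\circ h}h'(f_\sigma\circ h)$, $\sum_{h\in\cH_{n_0}}p_h=1$. Pointwise Cauchy--Schwarz for this kernel gives
\[
u_m^2=\bigl(L_\sigma^{n_0}(\chi_m u_{m-1})\bigr)^2\le L_\sigma^{n_0}(\chi_m^2)\cdot L_\sigma^{n_0}(u_{m-1}^2).
\]
Writing $\hat I_{h_j}$ for the union of middle-thirds of the partition intervals of type $h_j$, the function $\chi_m$ takes the value $\eta<1$ precisely on $h_1(\hat I_{h_1})\cup h_2(\hat I_{h_2})\subset Y$ and equals $1$ elsewhere. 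Since $\range(h_1)\cap\range(h_2)=\emptyset$ and $\chi_m\le 1$ everywhere,
\[
L_\sigma^{n_0}(\chi_m^2)(z)\le 1-(1-\eta^2)\bigl[\mathbf{1}_{\hat I_{h_1}}(z)p_{h_1}(z)+\mathbf{1}_{\hat I_{h_2}}(z)p_{h_2}(z)\bigr],
\]
so $L_\sigma^{n_0}(\chi_m^2)\le 1-c_0$ on $\hat I=\hat I_{h_1}\cup\hat I_{h_2}$ for some $c_0>0$ independent of $b,\sigma,z,m$ (using Remark~\ref{rmk-P} and that $h_1,h_2$ are fixed branches with $\inf p_{h_j}>0$).

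Integrating, using $\int L_0^{n_0}\varphi\,d\mu=\int\varphi\,d\mu$ for $\sigma=0$,
\[
\int u_m^2\,d\mu\le \int u_{m-1}^2\,d\mu - c_0\int_{\hat I}L_0^{n_0}(u_{m-1}^2)\,d\mu.
\]
I would apply Proposition~\ref{prop-fed} to $w=L_0^{n_0}(u_{m-1}^2)$: the cone bound gives $|\log u_{m-1}^2|_\alpha\le 2C_4|b|^\alpha$, which after composition with any $h\in\cH_{n_0}$ is damped by $|h'|_\infty^\alpha\le C_1^\alpha\rho^{n_0}$, and~\eqref{eq-large2} enforces $2C_4 C_1^\alpha\rho^{n_0}\le 1$; the remaining log-H\"older contributions from $p_h$ are bounded uniformly in $b$, so $|\log w|_\alpha\le K|b|^\alpha$ with $K$ independent of $b$ (using $|b|\ge 1$). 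Proposition~\ref{prop-fed} then yields $\int_{\hat I}w\,d\mu\ge c_1\int_Y w\,d\mu=c_1\int u_{m-1}^2\,d\mu$, so
\[
\int u_m^2\,d\mu\le (1-c_0 c_1)\int u_{m-1}^2\,d\mu.
\]
Iterating gives the claim with $\beta=1-c_0 c_1<1$. For $|\sigma|>0$ the identity $\int L_\sigma^{n_0}\varphi\,d\mu=\int\varphi\,d\mu$ holds only up to a factor $1+O(\sigma)$ by standard perturbation theory applied to the $L_\sigma^*$-invariant probability, and shrinking $\eps$ absorbs this into a slightly larger $\beta<1$.

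The main obstacle is producing a uniform-in-$b$ log-H\"older control of $L_0^{n_0}(u_{m-1}^2)$ so that Proposition~\ref{prop-fed} applies: condition~\eqref{eq-large2} was imposed precisely for this, ensuring that the $|b|^\alpha$-sized cone seminorm of $u_{m-1}^2$ is reduced, via the $n_0$-step inverse-branch contraction, to a constant of the form $K|b|^\alpha$ with $K$ fixed. Without this tight interplay between $C_4$ and $\rho^{n_0}$, Proposition~\ref{prop-fed} would deliver a $b$-dependent $\delta'''$ and the geometric iteration would not close; the perturbative treatment of $\sigma\ne 0$ is routine by comparison.
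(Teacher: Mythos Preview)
Your overall strategy---normalise so that $(|v|_\infty\mathbf 1,v)\in\cC_b$, iterate Corollary~\ref{cor-cancel} and Lemma~\ref{lem-cone} to produce majorants $u_m$, apply pointwise Cauchy--Schwarz to the Markov operator $L_\sigma^{n_0}$, and feed $w=L^{n_0}(u_{m-1}^2)$ into Proposition~\ref{prop-fed}---is exactly the paper's, and your treatment of the case $\sigma=0$ is correct.

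The gap is in the final paragraph. Your symmetric Cauchy--Schwarz yields
$u_m^2\le L_\sigma^{n_0}(\chi_m^2)\,L_\sigma^{n_0}(u_{m-1}^2)$, and after the Federer step you arrive at
\[
\int u_m^2\,d\mu\le(1-c_0c_1)\int_Y L_\sigma^{n_0}(u_{m-1}^2)\,d\mu.
\]
To iterate you need $\int L_\sigma^{n_0}(u_{m-1}^2)\,d\mu\le(1+o(1))\int u_{m-1}^2\,d\mu$ uniformly in $m$. But a direct computation gives
$(L_\sigma^{n_0})^*1=\lambda_\sigma^{-n_0}e^{-\sigma R_{n_0}}(f_\sigma/f_0)\,(f_0/f_\sigma)\circ F^{n_0}$,
and since $R$ is \emph{unbounded} under condition~(iv) (this is the generic situation, in particular for the Lorenz application), the factor $e^{-\sigma R_{n_0}}$ is unbounded when $\sigma<0$. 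Equivalently, the $L_\sigma^*$-invariant probability you invoke has density $f_\sigma g_\sigma$ where $g_\sigma$ solves $g_\sigma=\lambda_\sigma^{-1}e^{-\sigma R}\,g_\sigma\circ F$; there is no reason for $g_\sigma$ to be bounded above and below, and spectral perturbation in $(C^\alpha)^*$ does not give pointwise control of densities. So the claimed $(1+O(\sigma))$ multiplicative factor is not available.

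The paper circumvents this with an \emph{asymmetric} Cauchy--Schwarz: writing each summand as
$\{|h'|^{1/2}(f_\sigma^{1/2}u_m)\circ h\}\{e^{-\sigma R_{n_0}\circ h}|h'|^{1/2}(f_\sigma^{1/2}\chi_m)\circ h\}$
and applying Cauchy--Schwarz gives
\[
u_{m+1}^2\le \xi(\sigma)\,L_0^{n_0}(u_m^2)\;L_{2\sigma}^{n_0}(\chi_m^2),
\qquad
\xi(\sigma)=(\lambda_\sigma^{-2}\lambda_{2\sigma})^{n_0}
\Bigl|\tfrac{f_0}{f_\sigma}\Bigr|_\infty\Bigl|\tfrac{f_{2\sigma}}{f_\sigma}\Bigr|_\infty
\Bigl|\tfrac{f_\sigma}{f_0}\Bigr|_\infty\Bigl|\tfrac{f_\sigma}{f_{2\sigma}}\Bigr|_\infty.
\]
The point of this split is that the factor carrying $u_m^2$ is governed by $L_0$, for which the integral identity $\int L_0^{n_0}w\,d\mu=\int w\,d\mu$ holds \emph{exactly}, while all the $\sigma$-dependence is confined to the pointwise bound $L_{2\sigma}^{n_0}(\chi_m^2)\le1$ (using $L_{2\sigma}1=1$) and to the scalar $\xi(\sigma)$, which tends to $1$ as $\sigma\to0$ by Remark~\ref{rmk-P}. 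This is precisely what makes the iteration close uniformly in $|\sigma|<\eps$ without any control on dual eigenvectors or on $e^{-\sigma R}$.
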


\begin{proof}
Define
$u_0\equiv1, v_0=v/|v|_\infty$ and inductively,
\[
u_{m+1}=L_\sigma^{n_0}(\chi_mu_m), \qquad v_{m+1}=L_s^{n_0}(v_m),
\]
where $\chi_m=\chi(b,u_m,v_m)$.
It is immediate from the definitions that $(u_0,v_0)\in\cC_b$, and it follows
from Lemma~\ref{lem-cone} that $(u_m,v_m)\in\cC_b$ for all $m$.
Hence inductively the $\chi_m$ are well-defined as in Corollary~\ref{cor-cancel}.

We will show that there exists $\beta\in(0,1)$ such that
$\int u_{m+1}^2\,d\mu\le\beta\int u_m^2\,d\mu$ for all $m$.
Then
\[
|L_s^{mn_0}v|=
|v|_\infty|L_s^{mn_0}v_0|
=|v|_\infty|v_m|
\le |v|_\infty u_m,
\]
so that
\[
\int |L_s^{mn_0}v|^2\,d\mu
\le |v|_\infty^2\int u_m^2\,d\mu
\le \beta^m|v|_\infty^2\int u_0^2\,d\mu
= \beta^m|v|_\infty^2,
\]
as required.

Now
\begin{align*}
 u_{m+1}
 & =
\lambda_\sigma^{-n_0}f_\sigma^{-1}
\sum_{h\in\cH_{n_0}}e^{-\sigma R_{n_0}\circ h}|h'|(f_\sigma \chi_m u_m)\circ h
\\ & =
\lambda_\sigma^{-n_0}f_\sigma^{-1}
\sum_{h\in\cH_{n_0}}\{|h'|^{1/2}f_\sigma^{1/2} u_m)\circ h\}
\{e^{-\sigma R_{n_0}\circ h}|h'|^{1/2}(f_\sigma^{1/2} \chi_m )\circ h\},
\end{align*}
so by Cauchy-Schwarz
\begin{align*}
 u_{m+1}^2
 & \le
(\lambda_\sigma^{n_0}f_\sigma)^{-2}
\sum_{h\in\cH_{n_0}}|h'|(f_\sigma u_m^2)\circ h
\;\sum_{h\in\cH_{n_0}}e^{-2\sigma R_{n_0}\circ h}|h'|(f_\sigma\chi_m^2 )\circ h
 \\ & \le
(\lambda_\sigma^{n_0}f_\sigma)^{-2}
\Bigl|\frac{f_\sigma}{f_0}\Bigr|_\infty
\Bigl|\frac{f_\sigma}{f_{2\sigma}}\Bigr|_\infty
\sum_{h\in\cH_{n_0}}|h'|(f_0 u_m^2)\circ h
\;\sum_{h\in\cH_{n_0}}e^{-2\sigma R_{n_0}\circ h}|h'|(f_{2\sigma}\chi_m^2 )\circ h
 \\ & \le \xi(\sigma)
L_0^{n_0}(u_m^2)\;
L_{2\sigma}^{n_0}(\chi_m^2)
\end{align*}
where (noting that $\lambda_0=1$),
\[
\xi(\sigma)=
(\lambda_\sigma^{-2} \lambda_{2\sigma})^{n_0}
\Bigl|\frac{f_0}{f_\sigma}\Bigr|_\infty
\Bigl|\frac{f_{2\sigma}}{f_\sigma}\Bigr|_\infty
\Bigl|\frac{f_\sigma}{f_0}\Bigr|_\infty
\Bigl|\frac{f_\sigma}{f_{2\sigma}}\Bigr|_\infty.
\]

As in Subsection~\ref{sec-cancel}, we write $Y=\hat I\cup\hat J$.
If $y\in\hat I$ then $y$ lies in the middle third of an interval of type $h_1$ or $h_2$.
Suppose without loss that the type is $h_1$.  Then $\chi_m(h_1(y))=\eta$.
Hence
\begin{align*}
& (L_{2\sigma}^{n_0} \chi_m^2)(y)  \le 
\lambda_{2\sigma}^{-n_0}f_{2\sigma}(y)^{-1}
\Big\{\eta^2 e^{-2\sigma R_{n_0}\circ h_1(y)}|h_1'(y)|f_{2\sigma}(h_1(y))
\\ & \qquad \qquad \qquad
 \qquad \qquad \qquad\qquad\qquad +
\sum_{h\in\cH_{n_0},\,h\neq h_1}
e^{-2\sigma R_{n_0}\circ h(y)}|h'y|f_{2\sigma}(hy)\Bigr\}
\\ & \qquad = 
 (L_{2\sigma}^{n_0} 1)(y) -(1-\eta^2) \lambda_{2\sigma}^{-n_0}f_{2\sigma}(y)^{-1} e^{-2\sigma R_{n_0}\circ h_1(y)}|h_1'(y)|f_{2\sigma}(h_1(y)) 
\\ & \qquad \le  
1-(1-\eta^2)2^{-(n_0+2)}
|f_0|_\infty^{-1}\inf f_0\,
 e^{-2|R_{n_0}\circ h_1|_\infty}\inf |h_1'|= \eta_1<1.
\end{align*}
In this way we obtain that there exists $\eta_1<1$ such that
\[
u_{m+1}^2(y)\le \begin{cases} \xi(\sigma)\eta_1(L_0^{n_0} u_m^2)(y), &
y\in\hat I \\
\xi(\sigma)(L_0^{n_0} u_m^2)(y), &  y\in\hat J \end{cases}.
\]

Since $(u_m,v_m)\in\cC_b$ it follows in particular that 
$|\log u_m|_\alpha\le C_4|b|^\alpha$.
Hence $u_m^2(hx)/u_m^2(hy)\le 
\exp\{2C_4C_1^\alpha\rho^{n_0}|b|^\alpha|x-y|^\alpha\}
\le \exp\{|b|^\alpha|x-y|^\alpha\}$ 
by~\eqref{eq-large2}.
Let $w=L_0^{n_0}(u_m^2)$.
Then
\[
\frac{w(x)}{w(y)} = 
\frac{f_0(y)\sum_{h\in\cH_{n_0}}|h'x|f_0(hx)u_m^2(hx)}
{f_0(x)\sum_{h\in\cH_{n_0}}|h'y|f_0(hy)u_m^2(hy)}
\]
where
\[
\frac{|h'x|f_0(hx)u_m^2(hx)}{|h'y|f_0(hy)u_m^2(hy)}
\le \exp\{C_2|x-y|^\alpha\}\exp\{|f_0^{-1}|_\infty|f_0|_\alpha|x-y|^\alpha\}\exp\{|b|^\alpha|x-y|^\alpha\}.
\]
Hence $|\log w|_\alpha \le K|b|^\alpha$ where $K=2|f_0^{-1}|_\infty|f_0|_\alpha+2C_2$,
and so $w$ satisfies the hypotheses of Proposition~\ref{prop-fed}.
Consequently, $\int_{\hat I}w\,d\mu\ge \delta''' \int_{\hat J}w\,d\mu$.
Let $\beta'={\BIG\frac{1+\eta_1\delta'''}{1+\delta'''}}<1$.
Then
$\delta'''={\BIG\frac{1-\beta'}{\beta'-\eta_1}}$ and so
\[
(\beta'-\eta_1)\int_{\hat I}w\,d\mu\ge (1-\beta')
\int_{\hat J}w\,d\mu,
\]
which rearranges to give
\[
\eta_1\int_{\hat I}w\,d\mu
+ \int_{\hat J}w\,d\mu\le\beta'\int_Y w\,d\mu.
\]

Hence
\begin{align*}
\int_Y u_{m+1}^2\,d\mu  & \le 
\xi(\sigma) \Bigl(\eta_1\int_{\hat I}L_0^{n_0} (u_m^2)\,d\mu
+ \int_{\hat J}L_0^{n_0} (u_m^2)\,d\mu\Bigr) \\ & \le
\xi(\sigma)\beta'\int_Y L_0^{n_0} (u_m^2)\,d\mu
=\xi(\sigma)\beta'\int_Y u_m^2\,d\mu.
\end{align*}
Finally by Remark~\ref{rmk-P} we can shrink $\eps$ if necessary so that $\xi(\sigma)\beta'\le\beta<1$ for $|\sigma|<\eps$.~
\end{proof}

\subsection{From $L^2$ contraction to $C^\alpha$ contraction}

The next result shows how to pass from $L^1$ estimates to $L^\infty$ estimates.

\begin{prop} \label{prop-L1}
For any $B>1$, there exists $\eps\in(0,1)$, $\tau\in(0,1)$, $C_5>0$ such that
\begin{align*}
|L_s^nv|_\infty^2 \le 
C_5(1+|b|^\alpha) \tau^n|v|_\infty\|v\|_b
+C_5 B^n|v|_\infty \int|v|\,d\mu,
\end{align*}
for all $s=\sigma+ib$, $|\sigma|<\eps$, $n\ge1$, $v\in C^\alpha(Y)$.
\end{prop}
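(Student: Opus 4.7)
The strategy is a standard Dolgopyat-style interpolation that converts an $L^1(\mu)$ bound on $L_s^n v$ into a pointwise $L^\infty$ bound, picking up an $A^n$ factor from averaging over a ball of radius $r\sim A^{-n}$.

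The starting point will be the pointwise inequality $|L_s^n v|\le L_\sigma^n|v|$, immediate from the triangle inequality applied to the defining sum. Since $L_\sigma$ is positive with $L_\sigma 1=1$, this gives in particular $|L_s^n v|_\infty\le |v|_\infty$, while Lemma~\ref{lem-LY} supplies $|L_s^n v|_\alpha\le 2C_3(1+|b|^\alpha)\|v\|_b$. Next, fix $y\in Y$ and $r>0$. For any $x\in B_r(y)$, H\"older continuity gives $|L_s^n v(y)|\le |L_s^n v(x)|+|L_s^n v|_\alpha r^\alpha$; multiplying by $|L_s^n v(y)|\le |v|_\infty$ and averaging $x$ over $B_r(y)$ against $\mu$ (using that $d\mu/d\Leb$ is bounded below, so $\mu(B_r(y))\ge cr$) will produce the interpolation
\[
|L_s^n v(y)|^2 \le \frac{|v|_\infty}{cr}\int L_\sigma^n|v|\,d\mu +2C_3(1+|b|^\alpha)|v|_\infty\|v\|_b\,r^\alpha.
\]

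The remaining $L^1$ integral I intend to control via the uniform spectral gap of the family $\{L_\sigma\}_{|\sigma|<\eps}$ on $C^\alpha(Y)$: quasi-compactness is supplied by the $b=0$ case of Lemma~\ref{lem-LY} together with the compact embedding $C^\alpha\hookrightarrow L^\infty$, simplicity of the leading eigenvalue $1$ of $L_0$ is standard (positivity and mixing of $F$), and these properties persist under perturbation in the sense of Remark~\ref{rmk-P} with a gap rate $\tau_0<1$ uniform in $\sigma$ for $\eps$ small enough. This yields a decomposition $L_\sigma^n w=(\int w\,d\tilde\mu_\sigma)\cdot 1+N_\sigma^n w$ with $|N_\sigma^n w|_\infty\le K\tau_0^n\|w\|_\alpha$ and a probability measure $\tilde\mu_\sigma$ whose density with respect to $\mu$ is uniformly bounded. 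Applying this to $w=|v|$, and noting $\||v|\|_\alpha\le (1+|b|^\alpha)\|v\|_b$, integration against $\mu$ gives
\[
\int L_\sigma^n|v|\,d\mu \le C\int|v|\,d\mu+K\tau_0^n(1+|b|^\alpha)\|v\|_b.
\]

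To finish, I will choose $A\in(1,\min(B,1/\tau_0))$ — non-empty because $B>1$ and $\tau_0<1$ — and set $r=A^{-n}$. Then $r^{-1}=A^n\le B^n$ absorbs the $\int|v|\,d\mu$ contribution, while $r^{-1}\tau_0^n=(A\tau_0)^n$ and $r^\alpha=A^{-n\alpha}$ both decay geometrically at the common rate $\tau=\max(A\tau_0,A^{-\alpha})\in(0,1)$. Taking the supremum over $y$ yields the proposition with a constant $C_5$ depending only on $c$, $C_3$, $C$, $K$ and the choice of $A$.

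The hard part is ensuring the spectral gap constant $\tau_0$ stays bounded away from $1$ uniformly in $|\sigma|<\eps$: this requires tracking the constants in the perturbation of the $L_0$-eigenprojection carefully, though it is otherwise routine given Lemma~\ref{lem-LY}. Everything else is a one-page H\"older interpolation.
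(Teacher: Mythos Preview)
Your interpolation argument has a genuine gap at the step asserting that the eigenfunctional $\tilde\mu_\sigma$ of $L_\sigma$ has density with respect to $\mu$ that is uniformly bounded for $|\sigma|<\eps$. This fails whenever the roof function $R$ is unbounded --- and condition~(iv) explicitly allows $|R\circ h|_\infty\to\infty$ (as happens in the Lorenz application). Concretely: if $d\tilde\mu_\sigma=g_\sigma\,d\mu$, then unwinding $L_\sigma^*\tilde\mu_\sigma=\tilde\mu_\sigma$ shows that $q_\sigma:=(f_0/f_\sigma)g_\sigma$ satisfies $q_\sigma=\lambda_\sigma^{-1}e^{-\sigma R}\,q_\sigma\circ F$. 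For $\sigma<0$ the factor $e^{-\sigma R}=e^{|\sigma|R}$ is unbounded; since each partition element maps onto $Y$ under $F$ and $q_\sigma$ is positive somewhere, $q_\sigma$ must blow up on branches where $R$ is large. Your displayed bound $\int L_\sigma^n|v|\,d\mu\le C\int|v|\,d\mu+K\tau_0^n(1+|b|^\alpha)\|v\|_b$ would, on letting $n\to\infty$, yield $\tilde\mu_\sigma(|v|)\le C\mu(|v|)$ for all $v\in C^\alpha$, i.e.\ precisely the bounded-density claim. And $\sigma<0$ is exactly the range needed downstream (the proof of Lemma~\ref{lem-semiflow} works on $\sigma\in[-\tfrac12\eps,0]$). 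Replacing $\tilde\mu_\sigma(|v|)$ by the trivial bound $|v|_\infty$ does not rescue the argument, since the proposition crucially needs the $\int|v|\,d\mu$ term (not $|v|_\infty^2$) for the substitution step in Corollary~\ref{cor-L2}.

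The paper's proof sidesteps this obstruction by a pointwise Cauchy--Schwarz on the sum defining $L_s^n$: splitting each summand as a product of two square roots factors $|L_s^nv|^2$ as at most $(\lambda_\sigma^{-2}\lambda_{2\sigma})^n\xi(\sigma)\,L_{2\sigma}^n(|v|)\,L_0^n(|v|)$. The factor $L_{2\sigma}^n(|v|)$ is bounded by $|v|_\infty$ since $L_{2\sigma}1=1$; the scalar $(\lambda_\sigma^{-2}\lambda_{2\sigma})^n$ is made $\le B^n$ by shrinking $\eps$; and only $|L_0^n(|v|)|_\infty$ remains, for which the eigenfunctional is $\mu$ itself, so the spectral gap of $L_0$ alone gives $|L_0^n(|v|)|_\infty\le\int|v|\,d\mu+O(\tau_1^n\|v\|_\alpha)$. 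The point is that the twist $e^{-\sigma R_n}$ is packaged entirely into the \emph{operator} $L_{2\sigma}^n$ (bounded since $L_{2\sigma}1=1$) rather than into a dual density, which is what makes the argument robust to unbounded $R$.
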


\begin{proof}
Modifying the Cauchy-Schwarz argument in the proof of Lemma~\ref{lem-L2},
\begin{align*}
|L_s^nv| & \le  
\lambda_\sigma^{-n}f_\sigma^{-1}
\sum_{h\in\cH_n}e^{-\sigma R_n\circ h}|h'|(f_\sigma |v|)\circ h
\\ & =
\lambda_\sigma^{-n}f_\sigma^{-1}
\sum_{h\in\cH_n}
\{e^{-\sigma R_n\circ h}|h'|^{1/2}(f_\sigma |v|)^{1/2}\circ h \}
\{|h'|^{1/2}(f_\sigma|v|)^{1/2}\circ h\}
\end{align*}
so 
\begin{align*}
|L_s^nv|^2
 & \le
\lambda_\sigma^{-2n}f_\sigma^{-2}
\sum_{h\in\cH_n}e^{-2\sigma R_n\circ h}|h'|(f_\sigma|v|)\circ h
\;\sum_{h\in\cH_n}|h'|(f_\sigma|v|)\circ h
 \\ & \le
(\lambda_\sigma^{-2}\lambda_{2\sigma})^n\xi(\sigma) L_{2\sigma}^n (|v|) L_0^n (|v|)
\end{align*}
where $\xi(\sigma)=
|f_0/f_\sigma|_\infty
|f_{2\sigma}/f_\sigma|_\infty
|f_\sigma/f_0|_\infty
|f_\sigma/f_{2\sigma}|_\infty\le 16$.
By Remark~\ref{rmk-P}, 
\begin{align} \label{eq-C5}
|L_s^nv|_\infty^2 \le 16
B^n|v|_\infty |L_0^n (|v|)|_\infty,
\end{align}
where $B$ is arbitrarily close to $1$.
Since $L_0$ is the normalised transfer operator for the uniformly expanding map
$F:Y\to Y$, there are constants $C'>0$, $\tau_1\in(0,1)$ such that
$|L_0^nw|_\infty\le C'\tau_1^n \|w\|_\alpha$ for all $w\in C^\alpha(Y)$ with $\int w\,d\mu=0$
and all $n\ge1$.   
Note that $\|w\|_{\alpha}\le (1+|b|^\alpha)\|w\|_b$ for all $b$.

Taking $w=|v|-\int|v|\,d\mu$, we obtain that
$|L_0^n(|v|)-\int|v|\,d\mu|_\infty\le 2C'\tau_1^n \|v\|_{\alpha}$ and hence
$|L_0^n(|v|)|_\infty \le 2C'(1+|b|^\alpha)\tau_1^n\|v\|_b+\int|v|\,d\mu$.
Substituting into~\eqref{eq-C5}, we obtain 
\begin{align*}
|L_s^nv|_\infty^2 \le 
32C'(1+|b|^\alpha) (B\tau_1)^n|v|_\infty\|v\|_b
+16 B^n|v|_\infty \int|v|\,d\mu.
\end{align*}
Finally, shrink $B>1$ if necessary so that $\tau=B\tau_1<1$.
\end{proof}

\begin{cor} \label{cor-L2}
There exists $\eps\in(0,1)$, $A>0$ and
$\beta\in(0,1)$ such that
\[
\|L_s^{4mn_0}v\|_b\le \beta^m\|v\|_b
\]
for all $m\ge A\log|b|$, $s=\sigma+ib$, $|\sigma|<\eps$, $|b|\ge\max\{4\pi/D,1\}$, and all $v\in C^\alpha(Y)$ satisfying $|v|_\alpha\le C_4|b|^\alpha|v|_\infty$.
\end{cor}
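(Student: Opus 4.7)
The plan is a three-stage argument splitting the $4mn_0$ iterates into blocks of $mn_0$, $mn_0$ and $2mn_0$, and progressively upgrading control from the $L^1$ norm to the $L^\infty$ norm and finally to the full $\|\cdot\|_b$ norm. I normalise $\|v\|_b=1$ throughout; the cone hypothesis $|v|_\alpha\le C_4|b|^\alpha|v|_\infty$ is precisely what Lemma~\ref{lem-L2} requires to be applied to $v$ itself. In Stage~1 I apply Lemma~\ref{lem-L2} directly to obtain $\int|L_s^{mn_0}v|^2\,d\mu\le\beta^m|v|_\infty^2\le\beta^m$, and Cauchy--Schwarz against the probability measure $\mu$ gives the $L^1$ bound $\int|L_s^{mn_0}v|\,d\mu\le\beta^{m/2}$.

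In Stage~2 I set $w=L_s^{mn_0}v$, note $|w|_\infty\le 1$ and $\|w\|_b\le 2C_3$ by Corollary~\ref{cor-LY}, and apply Proposition~\ref{prop-L1} with the role of $n$ taken by $mn_0$:
\[
|L_s^{2mn_0}v|_\infty^2\le 2C_3C_5(1+|b|^\alpha)\tau^{mn_0}+C_5\bigl(B^{n_0}\beta^{1/2}\bigr)^m.
\]
I exploit the freedom in Proposition~\ref{prop-L1} to choose $B>1$ close enough to $1$ that $B^{n_0}\beta^{1/2}<1$, which makes the second summand an $m$th power of a number less than $1$. The first summand is dominated by $c|b|^\alpha\tau^{mn_0}$, which is at most $\gamma^m$ for any fixed $\gamma\in(\tau^{n_0},1)$ provided $m\ge A_1\log|b|$ for a suitable $A_1$. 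Hence $|L_s^{2mn_0}v|_\infty\le C'\beta_1^{m/2}$ for some $\beta_1\in(0,1)$.

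In Stage~3 I apply Lemma~\ref{lem-LY} to $L_s^{2mn_0}v$ with $2mn_0$ further iterations:
\[
|L_s^{4mn_0}v|_\alpha\le C_3(1+|b|^\alpha)|L_s^{2mn_0}v|_\infty+C_3\rho^{2mn_0}|L_s^{2mn_0}v|_\alpha.
\]
Since $|L_s^{2mn_0}v|_\alpha\le 2C_3(1+|b|^\alpha)$ by Corollary~\ref{cor-LY}, dividing by $1+|b|^\alpha$ gives $|L_s^{4mn_0}v|_\alpha/(1+|b|^\alpha)\le C''\beta_2^m$ for some $\beta_2<1$. Combined with $|L_s^{4mn_0}v|_\infty\le|L_s^{2mn_0}v|_\infty$ from Stage~2, this yields $\|L_s^{4mn_0}v\|_b\le K\beta_3^m$. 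Taking $A$ large enough then absorbs the residual constant $K$ into a slightly larger rate, producing the desired bound $\beta^m$.

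The principal obstacle is the simultaneous balancing of the exponential rates appearing at each stage: the contraction rate $\beta$ from Lemma~\ref{lem-L2}, the decay rate $\tau$ from Proposition~\ref{prop-L1}, the amplification $B^{n_0}$ incurred in passing from $L^1$ to $L^\infty$, and above all the factor $(1+|b|^\alpha)$ coming from the $b$-dependence of the Lasota--Yorke estimate, which must be beaten by $\tau^{mn_0}$. This last requirement is exactly what forces the condition $m\ge A\log|b|$, and it dictates that $B$, $\beta$ and $A$ must be chosen in a mutually consistent way.
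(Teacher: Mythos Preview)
Your proposal is correct and follows essentially the same route as the paper's proof: apply Lemma~\ref{lem-L2} to $v$ to get $L^2$ contraction after $mn_0$ iterates, feed this (via Cauchy--Schwarz) into Proposition~\ref{prop-L1} applied to $L_s^{mn_0}v$ to get $L^\infty$ contraction after $2mn_0$ iterates (choosing $B>1$ close enough to $1$ that $B^{n_0}\beta^{1/2}<1$, and using $m\ge A\log|b|$ to kill the $(1+|b|^\alpha)\tau^{mn_0}$ term), and finally apply the Lasota--Yorke estimate once more to upgrade to $\|\cdot\|_b$ after $4mn_0$ iterates. The paper's argument is organised slightly differently in wording but uses exactly the same ingredients in the same order.
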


\begin{proof}
Substituting $L_s^nv$ in place of $v$ in Proposition~\ref{prop-L1}
and applying Corollary~\ref{cor-LY},
\begin{align*}
|L_s^{2n}v|_\infty^2  & \le 
2C_3C_5(1+|b|^\alpha) \tau^n|v|_\infty\|v\|_b
+C_5 B^n|v|_\infty \Bigl(\int|L_s^nv|^2\,d\mu\Bigr)^{1/2}.
\end{align*}
By Lemma~\ref{lem-L2},
\begin{align*}
|L_s^{2mn_0}v|_\infty^2  & \le 
2C_3C_5(1+|b|^\alpha) \tau^{mn_0}|v|_\infty\|v\|_b
+C_5 B^{mn_0}|v|_\infty \beta^{m/2}|v|_\infty,
\end{align*}
for all $m\ge1$.
Shrinking $B>1$ if necessary 
there exists $\beta_1<1$ such that
\begin{align*}
|L_s^{2mn_0}v|_\infty^2  & \le 
4C_3C_5(1+|b|^\alpha) \beta_1^m|v|_\infty\|v\|_b.
\end{align*}
Hence, there exists $A>0$, $\beta_2<1$, such that
$|L_s^{2mn_0}v|_\infty^2  \le 
\beta_2^{2m}|v|_\infty\|v\|_b$ for all $m\ge A\log|b|$, and so
\begin{align} \label{eq-infty}
|L_s^{2mn_0}v|_\infty  \le 
\beta_2^m\|v\|_b \quad\text{ for all $m\ge A\log|b|$.}
\end{align}

Next, substituting $L_s^nv$ for $v$ in Lemma~\ref{lem-LY},
\begin{align*}
|L_s^{2n}v|_\alpha & \le C_3(1+|b|^\alpha)\{
 |L_s^nv|_\infty+\rho^n
 \|L_s^nv\|_b\}
\le 2C_3^2(1+|b|^\alpha)\{ |L_s^nv|_\infty+ \rho^n \|v\|_b\}.
\end{align*}
Taking $n=2mn_0$, and using~\eqref{eq-infty},
\begin{align*}
|L_s^{4mn_0}v|_\alpha & \le
2C_3^2(1+|b|^\alpha)\{\beta_2^m \|v\|_b+\rho^{2mn_0} \|v\|_b\}
\le 4C_3^2(1+|b|^\alpha)\beta_3^m \|v\|_b
\end{align*}
for all $m\ge A\log|b|$.
This combined with~\eqref{eq-infty} shows that
$\|L_s^{4mn_0}v\|_b\le 4C_3^2\beta_3^m\|v\|_b$ for all
$m\ge A\log|b|$.  Finally the choices of $\beta_3$ and $A$ can be modified to
absorb the constant $4C_3^2$.
\end{proof}

\begin{thm} \label{thm-dolg}
Let $D'=\max\{4\pi/D,2\}$.
There exists $\eps\in(0,1)$, $\gamma\in(0,1)$ and $A>0$
such that $\|P_s^n\|_b\le \gamma^n$ for
all $s=\sigma+ib$, $|\sigma|<\eps$, $|b|\ge D'$, $n\ge A\log |b|$.
\end{thm}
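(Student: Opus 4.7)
The plan is to reduce Theorem~\ref{thm-dolg} to an analogous bound on the normalised operators $L_s$, then to prove that bound by a dichotomy exploiting Corollary~\ref{cor-L2} together with Lemma~\ref{lem-LY}.

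First, the reduction: iterating the definition $L_sv=(\lambda_\sigma f_\sigma)^{-1}P_s(f_\sigma v)$ gives $P_s^n w=\lambda_\sigma^n f_\sigma L_s^n(w/f_\sigma)$. Using the algebra inequality~\eqref{eq-alg} and the uniform bounds on $f_\sigma$, $f_\sigma^{-1}$ from Remark~\ref{rmk-P} (shrinking $\eps$ so that $\lambda_\sigma$ is arbitrarily close to $1$), one obtains $\|P_s^n\|_b\le K\,\lambda_\sigma^n\|L_s^n\|_b$ with $K$ independent of $b$. Hence it suffices to prove $\|L_s^n\|_b\le\tilde\gamma^n$ for some $\tilde\gamma<1$ and all $n\ge A\log|b|$; the constant $K$ and factor $\lambda_\sigma^n$ are then absorbed by slightly enlarging $\tilde\gamma$ and $A$.

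For the bound on $L_s^n$ I would proceed by dichotomy applied to the current function along the orbit. \emph{Case A (in cone):} if $|v|_\alpha\le C_4|b|^\alpha|v|_\infty$, apply Corollary~\ref{cor-L2} to a block of length $4n_0\lceil A_0\log|b|\rceil$ (where $A_0$ is the constant there) to obtain contraction by $\beta^{A_0\log|b|}$, i.e.\ by at least $\beta^{1/(4n_0)}$ per iterate. \emph{Case B (out of cone):} if $|v|_\alpha>C_4|b|^\alpha|v|_\infty$, then $|v|_\alpha\le 2|b|^\alpha\|v\|_b$ (using $|b|\ge 1$) together with the standing assumption $C_4\ge 6C_3\ge 6$ forces $|v|_\infty<(2/C_4)\|v\|_b\le\tfrac13\|v\|_b$. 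Choosing $n_1$ independent of $b$ so that $C_3\rho^{n_1}\le\tfrac16$, Lemma~\ref{lem-LY} yields
\[
|L_s^{n_1}v|_\alpha\le C_3(1+|b|^\alpha)|v|_\infty+C_3\rho^{n_1}|v|_\alpha\le\Bigl(\tfrac{2C_3}{C_4}+\tfrac16\Bigr)(1+|b|^\alpha)\|v\|_b\le\tfrac12(1+|b|^\alpha)\|v\|_b,
\]
while $|L_s^{n_1}v|_\infty\le|v|_\infty\le\tfrac13\|v\|_b$. Thus $\|L_s^{n_1}v\|_b\le\tfrac12\|v\|_b$, a uniform contraction of rate at least $(1/2)^{1/n_1}$ per iterate.

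To finish, given $v$ I iteratively consume a Case A or Case B block according to whether the current function lies in the cone; each iterate contracts $\|\cdot\|_b$ by at least $\gamma_1=\min\{\beta^{1/(4n_0)},(1/2)^{1/n_1}\}<1$. Any leftover partial block at the end is absorbed at bounded cost via Corollary~\ref{cor-LY}, and $A$ is chosen large enough so that at least one full Case A block fits whenever $n\ge A\log|b|$. I expect the main obstacle to be guaranteeing uniform contraction in Case B: this is exactly where the otherwise-mysterious normalisation $C_4\ge 6C_3$ from the UNI discussion is used decisively, since without it the numerical inequality $2C_3/C_4+\tfrac16\le\tfrac12<1$ would fail and the out-of-cone regime would give at best stability, not contraction.
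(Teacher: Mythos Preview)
Your proposal is correct and follows essentially the same approach as the paper: reduce from $P_s$ to $L_s$ via the conjugacy $P_s^n w=\lambda_\sigma^n f_\sigma L_s^n(f_\sigma^{-1}w)$, then split according to whether $|v|_\alpha\le C_4|b|^\alpha|v|_\infty$ (handled by Corollary~\ref{cor-L2}) or not (handled by one application of Lemma~\ref{lem-LY} together with the normalisation $C_4\ge 6C_3$). Two minor remarks: you need not introduce a separate $n_1$, since condition~\eqref{eq-large3} already gives $C_3\rho^{n_0}\le\tfrac13$, and the paper simply uses $n_0$ in the out-of-cone case to obtain $\|L_s^{n_0}v\|_b\le\tfrac23\|v\|_b$; also, in your final paragraph the per-iterate rate $\gamma_1$ should be the $\max$ of the two contraction factors (the worse one), not the $\min$.
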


\begin{proof}
We claim that 
there exists $\eps\in(0,1)$, $\gamma_1\in(0,1)$ $A,C>0$
such that
$\|L_s^{4mn_0}\|_b\le C\gamma_1^m$ for
all $s=\sigma+ib$, $|\sigma|<\eps$, $|b|\ge \max\{4\pi/D,2\}$, $m\ge A\log |b|$.

Suppose that the claim holds.  Write $n=4mn_0+r$ where $r<4n_0$.
By Corollary~\ref{cor-LY}, 
\[
\|L_s^n\|_b\le \|L_s^r\|_b\|L_s^{4mn_0}\|_b
\le 2C_3C\gamma_1^m \ll (\gamma_1^{1/(4n_0)})^n.
\]
By definition, $P_sv=\lambda_\sigma f_\sigma L_s(f_\sigma^{-1}v)$
so using the fact that $\|f_\sigma\|_\alpha$ and 
$\|f_\sigma^{-1}\|_\alpha$ are bounded for $|\sigma|<\eps$, we obtain from~\eqref{eq-alg} that
$\|P_s^n\|_b\ll \lambda_\sigma^n \|L_s^n\|_b
\ll ({\gamma_1^{1/(4n_0)}\lambda_\sigma})^n$.
By Remark~\ref{rmk-P} we can arrange that 
${\gamma_1^{1/(4n_0)}\lambda_\sigma}\le \gamma<1$ for
$|\sigma|<\eps$.  Then 
$\|P_s^n\|_b\le C\gamma^n$ for all $n\ge A\log|b|$.
Finally, we can increase $A$ and modify $\gamma$ to absorb the constant $C$
proving the theorem.

The verification of the claim splits into two cases.
In the harder case 
$|v|_\alpha \le C_4|b|^\alpha|v|_\infty$,
the claim follows from Corollary~\ref{cor-L2}.

It remains to deal with the simpler case where 
$|v|_\alpha > C_4|b|^\alpha|v|_\infty$.
Recall that $C_4\ge 6C_3\ge 6$ so 
\[
|L_s^{n_0}v|_\infty\le |v|_\infty
<(C_4|b|^\alpha)^{-1}|v|_\alpha
\le (C_4|b|^\alpha)^{-1}(1+|b|^\alpha)\|v\|_b
\le 2C_4^{-1}\|v\|_b
\le {\SMALL\frac13}\|v\|_b.
\]
By Lemma~\ref{lem-LY} and~\eqref{eq-large3}, 
\begin{align*}
|L_s^{n_0}v|_\alpha & \le (1+|b|^\alpha)\{C_3|v|_\infty+C_3\rho^{n_0}\|v\|_b\}
\\ & \le (1+|b|^\alpha)\{2C_3C_4^{-1}\|v\|_b+{\SMALL\frac13}\|v\|_b\}
= {\SMALL\frac23}(1+|b|^\alpha)\|v\|_b.
\end{align*}
Hence $\|L_s^{n_0}\|_b\le \frac23$.
\end{proof}

\subsection{Proof of Theorem~\ref{thm-semiflow}}

In this subsection, we show to proceed from Theorem~\ref{thm-dolg} to the main result.   

Define the Laplace transform $\hat\rho_{v,w}(s)=\int_0^\infty e^{-st}\rho_{v,w}(t)\,dt$.  
The key estimate is the following:

\begin{lemma} \label{lem-semiflow}
There exists $\eps>0$ such that $\hat\rho_{v,w}$ is analytic on $\{\Re s>-\eps\}$ for all $v\in F_\alpha(Y^R)$, $w\in L^\infty(Y^R)$.
Moreover, there is a constant $C>0$ such that
$|\hat\rho_{v,w}(s)|\le C(1+|b|^{1/2})\|v\|_\alpha|w|_\infty$ for
all $s=\sigma+ib$ with $\sigma\in[-\frac12\eps ,0]$.
\end{lemma}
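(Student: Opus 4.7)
My plan is to express the Laplace transform $\hat\rho_{v,w}(s)$ as a resolvent-type series in the twisted transfer operators $P_s$, and then combine the Dolgopyat-type estimate of Theorem~\ref{thm-dolg} with standard spectral perturbation theory. First I would unfold the suspension semiflow: for $(y,u)\in Y^R$ and $t\ge 0$ one has $F_t(y,u)=(F^n y,\,u+t-R_n(y))$ on the $t$-interval where $R_n(y)\le u+t<R_{n+1}(y)$. Breaking the $t$-integral according to $n$, performing the obvious change of variables, and applying the transfer-operator duality $\int M_s^n g\cdot h\,d\mu=\int g\cdot(h\circ F^n)e^{-sR_n}\,d\mu$ (with $M_s=f_0^{-1}P_sf_0$), I obtain a representation
\[
\hat\rho_{v,w}(s)\;=\;\bar R^{-1}\sum_{n=0}^\infty \int_Y M_s^n V_s\cdot W_s\,d\mu\;-\;\frac{1}{s}\Bigl(\int v\,d\mu^R\Bigr)\Bigl(\int w\,d\mu^R\Bigr)\;+\;E(s),
\]
where $V_s(y)=\int_0^{R(y)}e^{su}v(y,u)\,du$, $W_s(y)=\int_0^{R(y)}e^{-su}w(y,u)\,du$, and $E(s)$ is an entire boundary term from the $n=0$ slice.

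Next I would establish analyticity by separating off the leading eigenvalue. By Proposition~\ref{prop-P} and standard spectral perturbation theory, for $s$ close to $0$ we may write $P_s=\lambda_s\Pi_s+Q_s$ with $\lambda_s=1-\bar R\,s+O(s^2)$ and $\mathrm{sp}(Q_s)$ uniformly inside the open unit disc. Splitting $\sum_n M_s^n=(1-\lambda_s)^{-1}\tilde\Pi_s+(I-\tilde Q_s)^{-1}$ (tildes denoting conjugation by $f_0$) isolates a unique simple pole at $s=0$; its residue, computed from $\Pi_0\varphi=\bigl(\int\varphi\,d\mu\bigr)f_0$ together with $\int V_0\,d\mu\cdot\int W_0\,d\mu=\bar R^{\,2}\int v\,d\mu^R\int w\,d\mu^R$, exactly cancels the explicit $s^{-1}$-pole. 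The remainder $(I-\tilde Q_s)^{-1}$ is holomorphic near $s=0$, and for $|b|>D'$ Theorem~\ref{thm-dolg} extends this holomorphy to the full strip $\{\Re s>-\eps\}$, proving the first assertion of the lemma.

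For the quantitative bound with $|b|\ge\max\{D',1\}$, I would split $\sum_{n\ge 0}M_s^n$ at $n_*=\lceil A\log|b|\rceil$: the head contributes $O(\log|b|)$ by Corollary~\ref{cor-LY} ($\|P_s^n\|_b\ll 1$) and the tail contributes $O(1)$ by Theorem~\ref{thm-dolg} ($\|P_s^n\|_b\le\gamma^n$). Combined with elementary bounds $\|V_s\|_b\ll\|v\|_\alpha$ and $|W_s|_\infty\ll|w|_\infty$, which use the boundedness of $R$ together with (iv), this gives $|\hat\rho_{v,w}(s)|\ll(1+\log|b|)\|v\|_\alpha|w|_\infty$, comfortably inside the claimed $(1+|b|^{1/2})$-bound. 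The small-$|b|$ regime is handled uniformly in $\sigma\in[-{\SMALL\frac12}\eps,0]$ by the pole-cancellation argument above.

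The chief technical obstacle is that the roof $R$ is only $C^1$ on each partition element $(c_m,d_m)$ and has jumps across them, so $V_s$ and $W_s$ need not lie in $C^\alpha(Y)$ and a direct application of Theorem~\ref{thm-dolg} to $V_s$ is illegal. I would circumvent this by observing that $P_s V_s$ is globally $\alpha$-H\"older on $Y$, since each $R\circ h$ for $h\in\cH$ is $C^1$ on all of $Y$ by (iii), so the branch sum defining $P_s V_s$ reassembles into a H\"older function. Accordingly I would treat the $n=0$ term of $\sum_n M_s^n V_s$ separately using elementary $L^\infty$ estimates and apply Theorem~\ref{thm-dolg} only to $\sum_{n\ge 1}M_s^n V_s=M_s\sum_{n\ge 0}M_s^n V_s$. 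Tracking the interaction of the $(1+|b|^\alpha)^{-1}$ factor in the $\|\cdot\|_b$-norm with the crude bound $\|P_s V_s\|_\alpha\ll(1+|b|)\|v\|_\alpha$ (obtained as in the proof of Proposition~\ref{prop-P}) absorbs any polynomial loss and is the most delicate piece of bookkeeping in the argument.
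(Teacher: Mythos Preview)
Your overall architecture matches the paper's: the Pollicott decomposition $\hat\rho_{v,w}(s)=J_0(s)+\bar R^{-1}\sum_{n\ge1}\hat J_n(s)$ with $\hat J_n(s)=\int_Y e^{-sR_n}v_s\,w_s\circ F^n\,d\mu$, the pole-cancellation/aperiodicity argument for bounded $|b|$, and the head/tail split at $n_*\approx A\log|b|$ using Corollary~\ref{cor-LY} and Theorem~\ref{thm-dolg} for large $|b|$. However there is a genuine gap.

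You assert the ``boundedness of $R$'' and use it to claim $|W_s|_\infty\ll|w|_\infty$ and $\|V_s\|_b\ll\|v\|_\alpha$. In this framework $R$ is \emph{not} bounded: it is only $C^1$ on each partition element, and the sole global control is the exponential tail condition~(iv). (This is essential for the Lorenz application, where the induced roof is unbounded near the singularity.) Consequently $|W_s(y)|\le R(y)e^{|\sigma|R(y)}|w|_\infty$ can be arbitrarily large, and likewise $V_s\notin L^\infty$ in general. Your later observation that $P_sV_s$ is globally H\"older is on the right track, but the reason is not merely the discontinuity of $R$ across partition boundaries; it is that the branch sum $\sum_{h\in\cH}e^{-sR\circ h}|h'|\,(\cdot)\circ h$ together with~(iv) tames the unboundedness of $R\circ h$ (see Proposition~\ref{prop-vw}, which gives $\|P_s(f_0v_s)\|_b\ll\|v\|_\alpha$). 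For $w_s$ the paper uses the $L^1$ bound $|w_s|_1\ll|w|_\infty$ (Proposition~\ref{prop-vw}) rather than an $L^\infty$ bound, and then pairs $|P_s^n(P_s(f_0v_s))|_\infty$ against $|w_s|_1$.

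A secondary slip: Corollary~\ref{cor-LY} bounds the \emph{normalised} operator $L_s$, not $P_s$. Since $P_s^n=\lambda_\sigma^n f_\sigma L_s^n f_\sigma^{-1}$ and $\lambda_\sigma>1$ for $\sigma<0$, one has $\|P_s^n\|_b\ll\lambda_\sigma^n$, not $\ll1$. The paper's head estimate (Lemma~\ref{lem-semiflowi}) therefore does not give $O(\log|b|)$ but rather shrinks $\eps$ so that $\lambda_{-\eps}^{A\log|b|}\le|b|^{1/3}$, whence the head is $O(|b|^{1/3}\log|b|)\le O(|b|^{1/2})$. This is easily repaired, but your claim as written is incorrect.
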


\begin{pfof}{Theorem~\ref{thm-semiflow}}
By Lemma~\ref{lem-semiflow}, $\hat\rho_{v,w}$ is
analytic on $\{\Re s>-\eps\}$.
The inversion formula gives
\[
\rho_{v,w}(t) = \int_\Gamma e^{st} \hat\rho_{v,w}(s)\,ds,
\]
where we can take $\Gamma=\{\Re s=-\frac12\eps\}$.

Applying Taylor's Theorem as in~\cite[Section 4, VI]{Dolgopyat98a},
\[
\hat\rho_{v,w}(s)=\rho_{v,w}(0)s^{-1}+
\rho_{\partial_tv,w}(0)s^{-2}+s^{-2}\hat\rho_{\partial_t^2v,w}(s).
\]
By Lemma~\ref{lem-semiflow},
$|s^{-2}\hat\rho_{\partial_t^2v,w}(s)|\ll 
|s|^{-2}(1+|b|^{1/2})\|v\|_{\alpha,2}|w|_\infty
\ll (1+|b|^{3/2})^{-1}\|v\|_{\alpha,2}|w|_\infty$ 
for
$\sigma=-\frac12\eps$ and the result follows.
\end{pfof}

In the remainder of this section, we prove Lemma~\ref{lem-semiflow}.
Given $v,w\in L^\infty(Y^R)$, $s\in\C$, define 
\[
v_s(y)=\int_0^{R(y)}e^{su}v(y,u)\,du, \qquad
w_s(y)=\int_0^{R(y)}e^{-su}w(y,u)\,du.
\]
Let $\hat J_n(s)=\int_Y e^{-sR_n}v_s\,w_s\circ F^n\,d\mu$.
By Appendix~\ref{sec-Poll},
$\hat\rho_{v,w}(s)  =J_0(s)+(1/\bar R)\Psi(s)$
where $|J_0(s)|\ll |v|_\infty|w|_\infty$
and $\Psi(s)=\sum_{n=1}^\infty \hat J_n(s)$.

Let $A$, $D'$ be as in Theorem~\ref{thm-dolg}.
We split the proof into three ranges of $n$ and $b$: (i) $|b|\le D'$, (ii)
$n\le A\log |b|$, $|b|\ge2$, and (iii) $|b|\ge D'$, $n\ge A\log |b|$.
Lemma~\ref{lem-semiflow} follows from
Lemmas~\ref{lem-semiflowi},~\ref{lem-semiflowii},~\ref{lem-semiflowiii} below.

\begin{prop} \label{prop-ReR}
$Re^{\frac12 \eps R}\le 2\eps^{-1}e^{\eps R}$
and $\int_Y e^{\eps R}\,d\mu<\infty$.
\end{prop}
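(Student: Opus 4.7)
The proposition packages two statements of slightly different character, so I would handle them separately.

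The first bound $Re^{\frac12\eps R}\le 2\eps^{-1}e^{\eps R}$ is an entirely elementary pointwise inequality, valid for any $\eps>0$ and $R\ge 0$. The plan is to rewrite it as $\frac{\eps}{2}R\le e^{\frac{\eps}{2}R}$ and appeal to the standard inequality $x\le e^x$ (for $x\ge 0$, or indeed for all $x\in\R$). No hypothesis beyond $R\ge 0$ is needed, and positivity of $R$ holds since $\inf R>0$.

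For the second statement $\int_Y e^{\eps R}\,d\mu<\infty$, the plan is to reduce to Lebesgue measure and then partition $Y$ according to the inverse branches so we can invoke condition~(iv). Since $\mu$ has a density with respect to Lebesgue that is bounded above (from the discussion following assumption~(ii)), it suffices to show $\int_Y e^{\eps R}\,d\Leb<\infty$. Writing $Y=\bigsqcup_{h\in\cH}h(Y)$ mod~$0$ and changing variables $y=h(x)$ on each piece gives
\[
\int_Y e^{\eps R}\,d\Leb=\sum_{h\in\cH}\int_{h(Y)}e^{\eps R(y)}\,dy=\sum_{h\in\cH}\int_Y e^{\eps R\circ h(x)}|h'(x)|\,dx\le \sum_{h\in\cH}e^{\eps |R\circ h|_\infty}|h'|_\infty,
\]
which is finite by condition~(iv) (after choosing $\eps$ to be the exponent there, shrinking it if needed to be compatible with other occurrences of $\eps$ in the paper).

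Neither step presents a genuine obstacle: the first is calculus and the second is a one-line change of variables. The only mild care needed is to make sure the $\eps$ here is the same (or smaller than) the $\eps$ supplied by hypothesis~(iv), which is the natural convention since that is the $\eps$ controlling summability throughout the preceding analysis.
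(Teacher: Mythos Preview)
Your proposal is correct and follows essentially the same approach as the paper: the first inequality is the elementary bound $t\le e^t$ (the paper phrases it as $te^t\le e^{2t}$, which is the same thing), and the second is obtained by decomposing $Y$ into the images $h(Y)$, $h\in\cH$, and bounding each piece by $e^{\eps|R\circ h|_\infty}|h'|_\infty$ so that condition~(iv) applies. The only cosmetic difference is that you pass to Lebesgue measure via the bounded density and change variables, whereas the paper bounds $\mu(h(Y))$ directly by $\diam h(Y)$ and then uses the distortion estimate $\diam h(Y)\le e^{C_2}|h'|_\infty$.
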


\begin{proof}  The first statement follows from the inequality
$te^t\le e^{2t}$ which holds for all $t\in\R$.
By~\eqref{eq-diam},
\[
\int_Y e^{\eps R}\,d\mu
=\sum_{h\in\cH}\int_{h(Y)}e^{\eps R}\,d\mu
\le \sum_{h\in\cH}\diam h(Y)e^{\eps |R\circ h|_\infty}
\le e^{C_2}
\sum_{h\in\cH} e^{\eps|R\circ h|_\infty}|h'|_\infty
\]
 which is finite by condition~(iv).
\end{proof}

\begin{lemma}[The range $n\le A\log|b|$, $|b|\ge2$.] \label{lem-semiflowi}
There exists $\eps>0$, $C>0$ such that
\[
\sum_{1\le n\le A\log|b|}|\hat J_n(s)|\le C\eps^{-2}(1+|b|^{1/2})|v|_\infty|w|_\infty\]
for all
$v,w\in L^\infty(Y^R)$ and for all
all $s=\sigma+ib$ with $\sigma\in[-\frac12\eps ,0]$, $|b|\ge 2$.
\end{lemma}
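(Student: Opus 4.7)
The plan is to bound $|\hat J_n(s)|$ by a crude pointwise estimate, reducing the problem to the exponential moment $\int_Y e^{\eps R_{n+1}}\,d\mu$ of the roof sum, control that moment via the leading eigenvalue $\lambda_{-\eps}$ of $P_{-\eps}$, and then sum the resulting geometric series over $1\le n\le A\log|b|$ by shrinking $\eps$ so that $\lambda_{-\eps}^{A\log|b|}$ grows strictly slower than $|b|^{1/2}$.

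For $s=\sigma+ib$ with $\sigma\in[-\frac12\eps,0]$, the elementary inequalities $|e^{\sigma u}|\le 1$ and $|e^{-\sigma u}|\le e^{(\eps/2)u}$ on $u\in[0,R(y)]$ give the pointwise bounds $|v_s(y)|\le R(y)|v|_\infty$ and $|w_s(y)|\le (2/\eps)e^{(\eps/2)R(y)}|w|_\infty$, together with $|e^{-sR_n}|\le e^{(\eps/2)R_n}$. Applying Proposition~\ref{prop-ReR} in the form $R\le(2/\eps)e^{(\eps/2)R}$ to the surviving linear factor $R(y)$ and substituting into the definition of $\hat J_n(s)$ yields
\[
|\hat J_n(s)|\le \frac{4}{\eps^2}|v|_\infty|w|_\infty\int_Y e^{(\eps/2)(R+R_n+R\circ F^n)}\,d\mu.
\]
Since $R\le R_n$ for $n\ge 1$ and $R_n+R\circ F^n=R_{n+1}$, the exponent is at most $\eps R_{n+1}$, so $|\hat J_n(s)|\le 4\eps^{-2}|v|_\infty|w|_\infty\int_Y e^{\eps R_{n+1}}\,d\mu$. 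The identity $P_0^n(e^{\eps R_n}\cdot)=P_{-\eps}^n(\cdot)$, immediate from the definition of $P_s$, then gives $\int_Y e^{\eps R_n}\,d\mu=\int_Y P_{-\eps}^n f_0\,d\Leb$, and by Proposition~\ref{prop-P}, Corollary~\ref{cor-LY} and the spectral decomposition in Subsection~\ref{sec-P} this is at most $C\lambda_{-\eps}^n$, where $\lambda_{-\eps}$ is the leading eigenvalue of $P_{-\eps}$. By Remark~\ref{rmk-P}, $\lambda_{-\eps}\to 1$ as $\eps\to 0$, so I would shrink $\eps$ until $A\log\lambda_{-\eps}\le \frac13$, forcing $\lambda_{-\eps}^{A\log|b|}\le|b|^{1/3}$.

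The sum is then handled with the bound $\sum_{n=1}^N\lambda^n\le N\lambda^N$ (valid for $\lambda\ge 1$), which crucially avoids the factor $(\lambda_{-\eps}-1)^{-1}\asymp 1/\eps$ that a geometric-series estimate would introduce and would spoil the $\eps^{-2}$ prefactor. Putting everything together,
\[
\sum_{1\le n\le A\log|b|}|\hat J_n(s)|\ll \eps^{-2}(\log|b|)\,|b|^{1/3}|v|_\infty|w|_\infty,
\]
and the elementary inequality $(\log|b|)\,|b|^{1/3}\le(6/e)|b|^{1/2}$ for $|b|\ge 2$ (whose sup is attained at $|b|=e^6$) yields the claimed bound $C\eps^{-2}(1+|b|^{1/2})|v|_\infty|w|_\infty$. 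The main subtlety is matching the target exponent $1/2$ in $|b|$ while $A$ is fixed by Theorem~\ref{thm-dolg}; this is resolved by the one-parameter freedom to shrink $\eps$ combined with the $N\lambda^N$ bookkeeping, which keeps the prefactor at $\eps^{-2}$ rather than $\eps^{-3}$.
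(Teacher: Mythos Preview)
Your proof is correct and follows essentially the same route as the paper. Both arguments bound $|\hat J_n(s)|$ by $C\eps^{-2}\lambda_{-\eps}^{n+1}|v|_\infty|w|_\infty$ via the exponential moment $\int_Y e^{\eps R_{n+1}}\,d\mu$, then shrink $\eps$ so that $\lambda_{-\eps}^{A\log|b|}\le|b|^{1/3}$ and sum with the crude bound $\sum_{n\le N}\lambda^n\le N\lambda^N$ to obtain $(\log|b|)\,|b|^{1/3}\ll|b|^{1/2}$. The only cosmetic difference is that the paper introduces the $\mu$-normalised twisted operator $Q_s$ (with $\int_Y Q_{-\eps}^n1\cdot e^{\eps R}\,d\mu$) whereas you work directly with $P_{-\eps}^{n+1}f_0$ on $\Leb$; these are the same computation, and your observation about why $N\lambda^N$ is preferable to the geometric-series formula (avoiding an extra $\eps^{-1}$) makes explicit a point the paper leaves implicit.
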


\begin{proof}  Note that $|v_s(y)|\le R(y)|v|_\infty$ and
$|w_s(y)|\le R(y) e^{\frac12\eps R(y)}|w|_\infty
\le 2\eps^{-1} e^{\eps R(y)}|w|_\infty$.
Hence
\[
|\hat J_n(s)|
\le 2\eps^{-1}|v|_\infty|w|_\infty\int_Y e^{\frac12\eps R_n}R\, e^{\eps R}\circ F^n\,d\mu.
\]
It is convenient to introduce the 
normalised twisted transfer operators $Q_s$ given by $\int_Y Q_s f\,g\,d\mu=\int_Y e^{-sR}f\,g\circ F\,d\mu$ for $f\in L^\infty(Y)$, $g\in L^1(Y)$.
Note that $Q_sf=f_0^{-1}P_s(f_0f)$ and hence (like $P_s$) has spectral radius
at most $\lambda_\sigma$.
Hence
\begin{align*}
 \int_Y e^{\frac12\eps R_n}R\, e^{\eps R}\circ F^n\,d\mu
 & \le 2\eps^{-1}\int_Y e^{\eps R_n} e^{\eps R}\circ F^n\,d\mu
 = 2\eps^{-1}\int_Y Q_\eps^n1\, e^{\eps R}\,d\mu
\\ &  \le  2\eps^{-1}|Q_\eps^n1|_\infty\int_Y e^{\eps R}\,d\mu
 \le  2\eps^{-1}\lambda_{-\eps}^n\int_Y e^{\eps R}\,d\mu.
\end{align*}
It follows from Proposition~\ref{prop-ReR} that there is a constant $C'>0$ such that
$|\hat J_n(s)|\le C'\eps^{-2}\lambda_{-\eps}^n|v|_\infty|w|_\infty$
.

Recall that $\lambda_{-\eps}=(1+\delta)$ where $\delta=\delta(\eps)\to0$ as $\eps\to0$.  We can arrange that $\delta A<\frac13$.  
Then $\lambda_{-\eps}^{A\log|b|}= |b|^{A\log(1+\delta)}\le |b|^{1/3}$.  Hence
\[
\sum_{1\le n\le A\log |b|}|\hat J_n(s)|\le AC'\eps^{-2}\log|b||b|^{1/3}|v|_\infty|w|_\infty,
\]
and the result follows.
\end{proof}

We recall the definition of the seminorm on
$F_\alpha(Y^R)$ given by $\|v\|_\alpha=|v|_\infty+|v|_\alpha$ where
$|v|_\alpha=\sup_{x\neq y}\sup_{0\le u \le R(x)\wedge R(y)}|v(x,u)-v(y,u)|/|x-y|^\alpha$.

\begin{prop} \label{prop-vs}
Let $h\in\cH$.  Then
\[
|v_s\circ h|_\infty\le |R\circ h|_\infty|v|_\infty,\qquad
\|v_s\circ h\|_\alpha\le (C_1+|R\circ h|_\infty)\|v\|_\alpha,
\]
for all $v\in F_\alpha(Y^R)$, 
and all $s=\sigma+ib$ with $\sigma\in[-\frac12\eps,0]$.
\end{prop}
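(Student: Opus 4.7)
The plan is to verify both estimates by unwinding the integral definition of $v_s$ and bounding term by term. Since $\sigma\le 0$, $|e^{su}|=e^{\sigma u}\le 1$ on $[0,R(h(y))]$, and the roof function $R$ controls the length of integration, producing the natural dependence on $|R\circ h|_\infty$.

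The $L^\infty$ bound is immediate: bounding $|v(h(y),u)|$ by $|v|_\infty$ and using $|e^{su}|\le1$ gives
\[
|v_s(h(y))| \le \int_0^{R(h(y))}|v(h(y),u)|\,du \le R(h(y))\,|v|_\infty,
\]
so $|v_s\circ h|_\infty\le |R\circ h|_\infty|v|_\infty$.

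For the H\"older bound, fix $x,y\in Y$ and assume without loss of generality that $R(h(x))\le R(h(y))$. Split
\[
v_s(h(x)) - v_s(h(y)) = \int_0^{R(h(x))} e^{su}\bigl[v(h(x),u)-v(h(y),u)\bigr]du - \int_{R(h(x))}^{R(h(y))} e^{su}\,v(h(y),u)\,du.
\]
The first integral is over $u\in[0,R(h(x))\wedge R(h(y))]$, so the spatial H\"older seminorm of $v$ applies: it is bounded by $|R\circ h|_\infty|v|_\alpha|h(x)-h(y)|^\alpha$. Controlling $|h(x)-h(y)|$ by the mean value theorem and condition (i) (which gives $|h'|_\infty\le C_1\rho_0\le C_1$) yields $|h(x)-h(y)|^\alpha\le C_1|x-y|^\alpha$. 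The boundary integral is handled by $|v|_\infty$ and (iii): $|R(h(x))-R(h(y))|\le C_1|x-y|$, and since $|x-y|\le 1$ one may replace $|x-y|$ by $|x-y|^\alpha$. Summing the two contributions gives
\[
|v_s(h(x))-v_s(h(y))| \le \bigl(C_1|R\circ h|_\infty|v|_\alpha + C_1|v|_\infty\bigr)|x-y|^\alpha,
\]
which, combined with the uniform bound above, yields $\|v_s\circ h\|_\alpha\le (C_1+|R\circ h|_\infty)\|v\|_\alpha$.

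No serious obstacle is anticipated: the calculation is a standard pairing of the spatial H\"older regularity of $v$ on $Y^R$ with the $C^1$ regularity of the roof on each partition element. The only book-keeping is in accounting for the differing integration endpoints arising from $R(h(x))\ne R(h(y))$, which is handled cleanly by the split above.
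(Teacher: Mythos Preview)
Your approach is exactly the paper's: split the difference $v_s(h(x))-v_s(h(y))$ into a ``common interval'' piece controlled by $|v|_\alpha$ and a ``boundary'' piece controlled by $|v|_\infty$ and $|(R\circ h)'|$. The paper records this as the single inequality
\[
|v_s(x)-v_s(y)|\le |v|_\infty\,|R(x)-R(y)|+|v|_\alpha\max\{R(x),R(y)\}\,|x-y|^\alpha,
\]
and then says the claim follows from (i) and (iii); your write-up spells out the same computation.

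There is one arithmetic slip in your final line. From your two pieces you obtain
\[
|v_s\circ h|_\alpha\le C_1|R\circ h|_\infty\,|v|_\alpha+C_1|v|_\infty,
\]
and you then assert this yields $\|v_s\circ h\|_\alpha\le (C_1+|R\circ h|_\infty)\|v\|_\alpha$. That does not follow in general: the product $C_1|R\circ h|_\infty$ can exceed the sum $C_1+|R\circ h|_\infty$. The point is that in bounding $|h(x)-h(y)|^\alpha$ you used $|h'|_\infty\le C_1$, discarding the factor $\rho_0$ from condition~(i). Keeping it gives $|h(x)-h(y)|^\alpha\le (C_1\rho_0)^\alpha|x-y|^\alpha$, and the stated constant $(C_1+|R\circ h|_\infty)$ then requires $(C_1\rho_0)^\alpha\le1$, i.e.\ that inverse branches are genuine contractions. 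In any case your argument yields $\|v_s\circ h\|_\alpha\le C_1(1+|R\circ h|_\infty)\|v\|_\alpha$, which is all that is used downstream (the constant is absorbed into $C_6$ in Proposition~\ref{prop-vw}).
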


\begin{proof}
The first estimate is immediate.
Also,
\[ 
|v_s(x)-v_s(y)|\le |v|_\infty |R(x)-R(y)|+|v|_\alpha \max\{R(x),R(y)\}|x-y|^\alpha,
\]
so the second estimate follows from conditions~(i) and~(iii).
\end{proof}

\begin{prop} \label{prop-vw}
There exists a constant $C_6>0$  such that
\[
|w_s|_1\le C_6 \eps^{-1}|w|_\infty, \quad
|P_s(f_0v_s)|_\infty \le C_6\eps^{-1}|v|_\infty, \quad
\|P_s(f_0v_s)\|_b \le C_6\eps^{-1}\|v\|_\alpha,
\]
for all $v\in F_\alpha(Y^R)$, 
$w\in L^\infty(Y^R)$, 
and all $s=\sigma+ib$ with $\sigma\in[-\frac12\eps,0]$.
\end{prop}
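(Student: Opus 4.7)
All three bounds rely on the same trick used already for Proposition~\ref{prop-ReR}: trading the factor $(1+|R\circ h|_\infty)$ for $\eps^{-1}$ at the cost of replacing $e^{\frac12\eps R}$ by $e^{\eps R}$, after which condition~(iv) produces the required summability.

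For the first bound, I start from $|w_s(y)|\le |w|_\infty\int_0^{R(y)}e^{-\sigma u}\,du\le R(y)e^{\frac12\eps R(y)}|w|_\infty$ (valid since $\sigma\ge -\frac12\eps$). Applying Proposition~\ref{prop-ReR} pointwise gives $|w_s(y)|\le 2\eps^{-1}e^{\eps R(y)}|w|_\infty$, and integrating against $\mu$ and using $\int_Y e^{\eps R}\,d\mu<\infty$ from Proposition~\ref{prop-ReR} yields $|w_s|_1\ll\eps^{-1}|w|_\infty$.

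For the second bound, I expand $P_s(f_0v_s)=\sum_{h\in\cH}e^{-sR\circ h}|h'|(f_0 v_s)\circ h$. By Proposition~\ref{prop-vs}, $|v_s\circ h|_\infty\le |R\circ h|_\infty|v|_\infty$, and $|e^{-sR\circ h}|\le e^{\frac12\eps|R\circ h|_\infty}$. The pointwise inequality from Proposition~\ref{prop-ReR} gives $|R\circ h|_\infty e^{\frac12\eps|R\circ h|_\infty}\le 2\eps^{-1}e^{\eps|R\circ h|_\infty}$, so
\[
|P_s(f_0v_s)|_\infty\le 2\eps^{-1}|f_0|_\infty|v|_\infty\sum_{h\in\cH}e^{\eps|R\circ h|_\infty}|h'|_\infty,
\]
and the sum is finite by~(iv).

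For the third bound, since $|P_s(f_0v_s)|_\infty$ is already controlled by the second bound (noting $|v|_\infty\le\|v\|_\alpha$), I only need the weighted seminorm $|P_s(f_0v_s)|_\alpha\ll(1+|b|^\alpha)\eps^{-1}\|v\|_\alpha$. I repeat the four-term splitting of $(A_{s,h}g)(x)-(A_{s,h}g)(y)$ done in the proof of Proposition~\ref{prop-P}, with $g=f_0v_s$. That calculation gives
\[
|A_{s,h}(f_0v_s)|_\alpha\ll (1+|b|^\alpha)e^{\frac12\eps|R\circ h|_\infty}|h'|_\infty\{|f_0v_s\circ h|_\infty+|f_0v_s\circ h|_\alpha\}.
\]
Combining Proposition~\ref{prop-vs} with the algebra property for $C^\alpha(Y)$ applied to $f_0\cdot(v_s\circ h)$ yields $\|f_0v_s\circ h\|_\alpha\ll(1+|R\circ h|_\infty)\|v\|_\alpha$. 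Then the same Proposition~\ref{prop-ReR} trick turns $(1+|R\circ h|_\infty)e^{\frac12\eps|R\circ h|_\infty}$ into $\eps^{-1}e^{\eps|R\circ h|_\infty}$, and summing over $h\in\cH$ using~(iv) gives $|P_s(f_0v_s)|_\alpha\ll(1+|b|^\alpha)\eps^{-1}\|v\|_\alpha$, so dividing by $(1+|b|^\alpha)$ puts us in $\|\cdot\|_b$ form.

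The main obstacle is purely bookkeeping in the third estimate: one must verify that all the $|b|^\alpha$ factors generated by differentiating the twisted exponential produce precisely a single $(1+|b|^\alpha)$ factor (absorbed by the $\|\cdot\|_b$ normalisation), while the extraneous $|R\circ h|_\infty$ factor coming from Proposition~\ref{prop-vs} is traded for $\eps^{-1}$ via Proposition~\ref{prop-ReR}, so that summability in (iv) can be invoked at the end.
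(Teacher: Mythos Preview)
Your proposal is correct and follows essentially the same approach as the paper: each of the three estimates is handled exactly as in the paper's proof, using Proposition~\ref{prop-vs} for the $|R\circ h|_\infty$ factor, the Proposition~\ref{prop-ReR} trade-off to convert that factor into $\eps^{-1}$, and condition~(iv) for summability. The only cosmetic difference is that you make the $f_0$ handling in the third bound explicit via the algebra property, whereas the paper absorbs $\|f_0\|_\alpha$ directly into the implicit constant when quoting the four-term estimate from Proposition~\ref{prop-P}.
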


\begin{proof}
First, by Proposition~\ref{prop-ReR}.
\[
|w_s|_1=\int_Y|w_s|\,d\mu
\le\int_Y\int_0^{R(y)}e^{\frac12\eps R(y)}|w(y,u)|\,du\,d\mu
\le |w|_\infty\int_Y R e^{\frac12\eps R}\,d\mu \ll \eps^{-1}|w|_\infty.
\]

Recall that $P_sv=\sum_{h\in\cH}A_{s,h}v$ where $A_{s,h}v=e^{-sR\circ h}|h'|v\circ h$.
Hence using Proposition~\ref{prop-vs},
\begin{align*}
|A_{s,h}(f_0v_s)|_\infty
& \le e^{\frac12\eps |R\circ h|_\infty}|h'||f_0\circ h|_\infty|v_s\circ h|_\infty
\le |f_0|_\infty |v|_\infty e^{\frac12\eps |R\circ h|_\infty} |R\circ h|_\infty |h'|
\\ & \le 2\eps^{-1}|f_0|_\infty |v|_\infty e^{\eps |R\circ h|_\infty} |h'|.
\end{align*}
By condition~(iv).
\[
|P_s(f_0v_s)|_\infty \le 2\eps^{-1}|f_0|_\infty |v|_\infty\sum_{h\in\cH}e^{\eps |R\circ h|_\infty} |h'|\ll \eps^{-1}|v|_\infty.
\]

Finally, it follows from the proof of Proposition~\ref{prop-P} that
\begin{align*}
|A_{s,h}(f_0v_s)|_\alpha &
\ll (1+|b|^\alpha)e^{\frac12\eps |R\circ h|_\infty}|h'|_\infty\|v_s\circ h\|_\alpha.
\end{align*}
By Proposition~\ref{prop-vs},
\[
 |A_{s,h}(f_0v_s))|_\alpha
\ll (1+|b|^\alpha)e^{\frac12\eps |R\circ h|_\infty}|h'|_\infty|R\circ h|_\infty\|v\|_\alpha 
\ll \eps^{-1}(1+|b|^\alpha)e^{\eps |R\circ h|_\infty}|h'|_\infty\|v\|_\alpha.
\]
Hence by condition~(iv), $|P_s(f_0v_s)|_\alpha\ll \eps^{-1}(1+|b|^\alpha)\|v\|_\alpha$
and it follows that
$\|P_s(f_0v_s)\|_b\ll \eps^{-1}\|v\|_\alpha$.
\end{proof}

\begin{lemma}[The range $|b|\le D'$.] \label{lem-semiflowii}
There exists $\eps>0$, $C>0$ such that
$|\Psi(s)|\le C\eps^{-2}\|v\|_\alpha|w|_\infty$ for all
$v\in F_\alpha(Y^R)$, $w\in L^\infty(Y^R)$ and for all
$s=\sigma+ib$ with $\sigma\in[-\frac12\eps ,0]$, $|b|\le D'$.
\end{lemma}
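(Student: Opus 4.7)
The plan is to combine transfer-operator duality with a uniform resolvent bound for $(I-P_s)^{-1}$ on the compact parameter set $\Omega=\{\sigma+ib:\sigma\in[-\frac12\eps,0],\,|b|\le D'\}$. Writing $d\mu=f_0\,d\Leb$, each term becomes
\[
\hat J_n(s)=\int_Y w_s\cdot P_s^n(f_0 v_s)\,d\Leb,
\]
so for $\Re s>0$ (where the series converges because $\lambda_\sigma<1$)
\[
\Psi(s)=\int_Y w_s\cdot (I-P_s)^{-1}P_s(f_0 v_s)\,d\Leb.
\]
What Appendix~\ref{sec-Poll} provides is the analytic continuation of this identity across $\{\Re s=0\}$, absorbing into the already-bounded $J_0(s)$ the only possible isolated singularity (the one coming from $\lambda_s=1$ at $s=0$); the task reduces to bounding the continuation uniformly on $\Omega$.

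I would next establish that, in the sense of that continuation, $(I-P_s)^{-1}$ is uniformly bounded in the norm $\|\cdot\|_b$ on $\Omega$. For $|b|$ bounded away from $0$ within $\Omega$, UNI (together with the continuity in Proposition~\ref{prop-P}) implies that the spectral radius of $P_{ib}$ on $C^\alpha(Y)$ is strictly less than $1$; by compactness there is a single $\rho_*<1$ and $C>0$ with $\|P_s^n\|_b\le C\rho_*^n$ uniformly there, so $(I-P_s)^{-1}$ is uniformly bounded in this region. For $s$ near $0$, standard perturbation theory gives the Riesz decomposition $P_s=\lambda_s\Pi_s+Q_s$ with $\Pi_s$ a holomorphic rank-one projection, $\lambda_s$ a simple eigenvalue near $1$, and the spectral radius of $Q_s$ bounded by a fixed $\rho_*<1$; thus $(I-Q_s)^{-1}$ is uniformly bounded, and the only singular contribution is the rank-one term $\lambda_s(1-\lambda_s)^{-1}\Pi_s$, whose pole at $s=0$ is exactly the piece absorbed into $J_0$. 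A finite subcover of $\Omega$ combines the two regimes into a single uniform bound $\|(I-P_s)^{-1}\|_b\le C$.

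Finally, I would combine this resolvent bound with the estimates from Proposition~\ref{prop-vw}, namely $|w_s|_1\le C_6\eps^{-1}|w|_\infty$ and $\|P_s(f_0 v_s)\|_b\le C_6\eps^{-1}\|v\|_\alpha$, together with the elementary fact that on the range $|b|\le D'$ the norm $\|\cdot\|_b$ is comparable to $\|\cdot\|_\alpha$ with a constant depending only on $D'$. This gives
\[
|\Psi(s)|\le |w_s|_1\,\bigl|(I-P_s)^{-1}P_s(f_0 v_s)\bigr|_\infty \le C\eps^{-2}\|v\|_\alpha|w|_\infty,
\]
as required. The main obstacle is the bookkeeping near $s=0$: one must verify that the splitting supplied by Appendix~\ref{sec-Poll} genuinely routes the singularity of $\lambda_s(1-\lambda_s)^{-1}\Pi_s(f_0 v_s)\cdot\int w_s f_s\,d\Leb$ into $J_0$, so that $\Psi$ itself admits a bounded continuation across $s=0$. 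This reduces to the holomorphy in $s$ of $\Pi_s(f_0 v_s)$ and $\int w_s f_s\,d\Leb$, which is ensured by the smoothness of $v_s,w_s$ in $s$ (Proposition~\ref{prop-vs}) and the perturbation-theoretic holomorphy of the Riesz projection.
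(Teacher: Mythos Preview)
Your overall architecture---rewrite $\hat J_n(s)$ via transfer-operator duality, obtain a uniform resolvent bound on the compact rectangle, and close with Proposition~\ref{prop-vw}---is exactly the paper's. Your treatment of $b\neq0$ is right in spirit; the paper is just more explicit: it shows $1\notin\spec Q_{ib}$ by choosing $q$ with $|qb|>D'$ and invoking Theorem~\ref{thm-dolg}, and uses the Lasota--Yorke inequality (Lemma~\ref{lem-LY}) to know that any spectrum near $1$ must be isolated eigenvalues.

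The genuine gap is at $s=0$. You assert that Appendix~\ref{sec-Poll} ``routes the singularity into $J_0$'', but $\hat J_0$ is entire (Proposition~\ref{prop-J0}); it absorbs nothing. In the Riesz decomposition the singular piece is $\lambda_s(1-\lambda_s)^{-1}\pi_s v_s$, and since $\lambda_0=1$ with $\lambda_s'\neq0$ this has a simple pole at $s=0$ with residue proportional to
\[
\pi_0 v_0=\int_Y v_0\,d\mu=\int_Y\!\!\int_0^{R(y)}v(y,u)\,du\,d\mu=\bar R\int_{Y^R}v\,d\mu^R.
\]
For generic $v$ this is nonzero, so $\Psi$ really does have a pole (reflecting that the uncentred $\int v\,w\circ F_t$ does not decay). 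Holomorphy of $\Pi_s$ and of $s\mapsto v_s$, which is all you appeal to at the end, cannot cancel a factor $(1-\lambda_s)^{-1}$. The paper's mechanism is different: it first replaces $v$ by $v-\int_{Y^R}v\,d\mu^R$ (harmless since $\rho_{v,w}$ is unchanged, and the resulting discrepancy in $\Psi$ is controlled by the uniform bound on the two $\hat J_0$'s), and then $\pi_0 v_0=0$ kills the residue so that $Z_sv=(I-Q_s)^{-1}Q_sv_s$ extends analytically across $s=0$. You need to insert this reduction; without it the argument does not close.
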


\begin{proof}
Replacing $v$ by $v-\int_{Y^R}v\,d\mu^R$, we can suppose without loss
that $v$ lies in the space $\mathcal{B}
=\{v\in F_\alpha(Y^R):\int_Y \int_0^{R(y)}v(y,u)\,du\,d\mu=0\}$.

It is again convenient to introduce the 
normalised twisted transfer operators $Q_s:C^\alpha(Y)\to C^\alpha(Y)$ mentioned in the proof of
Lemma~\ref{lem-semiflowi}.
We have 
\[
\Psi(s)=\sum_{n=1}^\infty \int_Y Q_s^n v_s\,w_s\,d\mu
=\int_Y (I-Q_s)^{-1}(Q_sv_s)\,w_s\,d\mu
=\int_Y Z_sv\,w_s\,d\mu,
\]
where $Z_sv=(I-Q_s)^{-1}(Q_sv_s)$.

Consider the family of operators $Z_s:\mathcal{B}\to C^\alpha(Y)$.
We claim that this family is analytic on $\{\Re s>0\}$ and admits an analytic extension beyond the imaginary axis.  Shrinking $\epsilon$ if necessary, we can
ensure that $Z_s$ is analytic on the region 
$\{s\in[-\eps,0]\times[-D',D']\}$
and hence there is a constant $C'>0$ such that
$\|Z_sv\|_\alpha\le C'\|v\|_\alpha$.
It then follows from Proposition~\ref{prop-vw} that
$|\Psi(s)|\le C'\|v_s\|_\alpha|w_s|_1\le C'C_6^2\eps^{-2}\|v\|_\alpha|w|_\infty$ for all
$s\in[-\eps/2,0]\times[-D',D']$.

Note that $Q_sf=f_0^{-1}P_s(f_0f)$.
Writing $s=\sigma+ib$, the spectral radius of 
$P_s$ and hence $Q_s$ is at most $\lambda_\sigma$ where $\lambda_0=1$ and $\lambda_\sigma<1$ for $\sigma>0$.  In particular,
$Z_s$ is analytic on $\{\Re s>0\}$.
Hence to prove the claim,
it remains to show that $Z_s$ is analytic on a neighborhood of $s=ib$ for each $b\in\R$.

For $|\Re s|\le \epsilon$ (with $\eps>0$ sufficiently small), it follows from Lemma~\ref{lem-LY}
that the essential spectral radius of $L_s$, and hence $Q_s$,
is strictly less than $1$, so the spectrum close to $1$ consists only of isolated eigenvalues.  

For $s=ib$ with $b\neq0$, we have the aperiodicity property that
$1\not\in\spec Q_{ib}$.  To see this, suppose 
that $Q_{ib}f=f$ for some $f\in C^\alpha(Y)$.
By definition, $Q_s$ is the $L^2$ adjoint of $f\mapsto e^{sR}f\circ F$ and hence
$e^{ibR}f\circ F=f$.
Choose $q\ge1$ so that $|qb|>D'$ and set $\tilde b=qb$, $\tilde f=f^q$.  Then
$e^{i\tilde b R}\tilde f\circ F= \tilde f$
and hence $Q_{i\tilde b}\tilde f= \tilde f$
and $P_{i\tilde b}(f_0\tilde f)=f_0\tilde f$.
By Theorem~\ref{thm-dolg}, $f\equiv0$.

It follows that for each $b\neq0$ there is an open set $U_b$ containing
$ib$ such that $1\not\in\spec Q_s$ for all $s\in U_b$.
Hence $(I-Q_s)^{-1}:C^\alpha(Y)\to C^\alpha(Y)$ is analytic on $U_b$.
By Proposition~\ref{prop-vw}, $Z_s:\mathcal{B}\to F_\alpha(Y)$ is analytic on $U_b$.

Finally we consider the point $s=0$.  For $s$ near to $0$,
let $\pi_s$ denote the spectral projection corresponding to the eigenvalue
$\lambda_s$ for $Q_s$. In particular, $\pi_0f=\int_Y f\,d\mu$. Then
$Q_s=\lambda_s\pi_s+E_s$ where $\pi_sE_s=E_s\pi_s$ and $E_s$ is a strict contraction uniformly in $s$ near $0$.
Hence $Z_sv=\sum_{n=1}^\infty Q_s^nv_s=(1-\lambda_s)^{-1}\lambda_s\pi_s v_s+Y_sv$ where $Y_s$ is analytic
in a neighborhood of $0$.
Moreover, $\lambda_s=1+cs+O(s^2)$ where $c\neq0$, so
$Z_s$ has at worst a simple pole at $0$.   But $\pi_0v_0=\int_Y\int_0^{R(y)}v(y,u)\,du\,d\mu=0$, 
so $Z_s:\mathcal{B}\to F_\alpha(Y)$ is analytic on a neighborhood of $0$ completing the proof.
\end{proof}

\begin{lemma}[The range $|b|\le D'$, $n\ge A\log|b|$.] \label{lem-semiflowiii}
There exists $\eps>0$, $C>0$ such that
\[
\sum_{n\ge A\log|b|}|\hat J_n(s)|\le C\eps^{-2}\|v\|_\alpha|w|_\infty,
\]
for all $v\in F_\alpha(Y^R)$, $w\in L^\infty(Y^R)$ and for all
$s=\sigma+ib$ with $\sigma\in[-\frac12\eps ,0]$, $|b|\le D'$.
\end{lemma}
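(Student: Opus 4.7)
The plan is to express $\hat J_n(s)$ in terms of the normalised twisted transfer operator $Q_s$ (defined as in the proof of Lemma~\ref{lem-semiflowii} by $\int_Y Q_s f\,g\,d\mu=\int_Y e^{-sR}f\,g\circ F\,d\mu$), and then to feed in the exponential contraction from Theorem~\ref{thm-dolg}. By duality,
\[
\hat J_n(s) = \int_Y e^{-sR_n}v_s\,w_s\circ F^n\,d\mu = \int_Y Q_s^n v_s\cdot w_s\,d\mu,
\]
so I will first bound via H\"older's inequality
\[
|\hat J_n(s)|\le |Q_s^n v_s|_\infty\,|w_s|_1.
\]
Proposition~\ref{prop-vw} immediately gives $|w_s|_1\le C_6\eps^{-1}|w|_\infty$, so the rest of the work is to bound $|Q_s^n v_s|_\infty$.

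For this, I will use the relation $Q_s f = f_0^{-1}P_s(f_0 f)$, which iterates to $Q_s^n v_s=f_0^{-1}P_s^n(f_0 v_s)$. Since $f_0$ is bounded above and below and lies in $C^\alpha(Y)$, we have
\[
|Q_s^n v_s|_\infty \le |f_0^{-1}|_\infty \|P_s^n(f_0 v_s)\|_b.
\]
The key trick is to peel off one copy of $P_s$ before invoking Theorem~\ref{thm-dolg}: write $P_s^n(f_0 v_s)=P_s^{n-1}(P_s(f_0 v_s))$. Proposition~\ref{prop-vw} gives $\|P_s(f_0 v_s)\|_b\le C_6\eps^{-1}\|v\|_\alpha$, which converts the quantity $f_0 v_s$ (whose $\|\cdot\|_b$ norm might a priori grow in $|b|$) into something with bounded $\|\cdot\|_b$ norm, and hence Theorem~\ref{thm-dolg} applies to $P_s^{n-1}$ for $n-1\ge A\log|b|$, yielding
\[
\|P_s^n(f_0 v_s)\|_b\le \gamma^{n-1}\|P_s(f_0 v_s)\|_b\le C_6\gamma^{n-1}\eps^{-1}\|v\|_\alpha.
\]

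Combining these estimates,
\[
|\hat J_n(s)|\ll \gamma^n \eps^{-2}\|v\|_\alpha|w|_\infty
\quad\text{for all } n\ge A\log|b|+1,
\]
and summing the geometric series yields
\[
\sum_{n\ge A\log|b|}|\hat J_n(s)|\ll \eps^{-2}\|v\|_\alpha|w|_\infty\sum_{n\ge A\log|b|}\gamma^n\ll \eps^{-2}\|v\|_\alpha|w|_\infty,
\]
which is the claimed bound (independent of $|b|$, since $\gamma<1$). The main subtlety is the norm-matching step: Theorem~\ref{thm-dolg} only provides contraction in the $\|\cdot\|_b$ norm, and a naive estimate of $\|f_0 v_s\|_b$ directly has an unwanted factor $(1+|b|^\alpha)^{-1}|v_s|_\alpha$ whose size in $b$ would ruin the argument; the crucial observation is that Proposition~\ref{prop-vw} already absorbs this factor into an $\eps^{-1}\|v\|_\alpha$ bound on $\|P_s(f_0 v_s)\|_b$, which is why we apply $P_s$ once before invoking the Dolgopyat-type contraction.
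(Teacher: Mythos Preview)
Your argument is correct and follows essentially the same route as the paper: rewrite $\hat J_n(s)$ via $Q_s^n v_s=f_0^{-1}P_s^n(f_0v_s)$, peel off one factor of $P_s$ so that Proposition~\ref{prop-vw} bounds $\|P_s(f_0v_s)\|_b$, apply Theorem~\ref{thm-dolg} to the remaining powers, and sum the geometric series against $|w_s|_1$. Your explanation of \emph{why} one must peel off a single $P_s$ before invoking the Dolgopyat estimate is a nice addition that the paper leaves implicit. One cosmetic point: since you apply Theorem~\ref{thm-dolg} to $P_s^{n-1}$, your bound literally covers $n-1\ge A\log|b|$, leaving the single boundary term $n=\lceil A\log|b|\rceil$; this is harmless (handle it with Corollary~\ref{cor-LY}, or simply absorb it by replacing $A$ with $A+1$), but you should say so.
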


\begin{proof}
Using the fact that $L^n(e^{-sR_n} v)=f_0^{-1}P^n(e^{-sR_n}f_0v) =f_0^{-1}P_s^n(f_0v)$ we have
\[
\hat J_n(s) =\int_Y f_0^{-1}P_s^n(P_s(f_0v_s))\,w_s\,d\mu.
\]
By Theorem~\ref{thm-dolg},
\begin{align*} 
 \sum_{n\ge A\log|b|} |P_s^n f|_\infty   & \le
\sum_{n\ge A\log|b|} \|P_s^n\|_b \|f\|_b
\le  \sum_{n\ge A\log|b|} \gamma^n \|f\|_b \le (1-\gamma)^{-1}\ \|f\|_b,
\end{align*}
so the result follows from Proposition~\ref{prop-vw}.
\end{proof}

\section{Flows over $C^{1+\alpha}$ hyperbolic skew products}
\label{sec-flow}

In this section, we prove a result on exponential decay of correlations for a class of skew product flows satisfying UNI, by reducing to the situation in Section~\ref{sec-semiflow}.
Our treatment is analogous to~\cite{AGY06}.

\paragraph{Uniformly hyperbolic skew products}
Let $X=Y\times Z$ where $Y=[0,1]$ and $Z$ is a compact Riemannian manifold.
Let $f(y,z)=(Fy,G(y,z))$ where $F:Y\to Y$, $G:Y\times Z\to Z$ are $C^{1+\alpha}$.

We say that $f:X\to X$ is a {\em $C^{1+\alpha}$ uniformly hyperbolic skew product}
if 
	$F:Y\to Y$ is a uniformly expanding map satisfying conditions~(i) and~(ii) as in Section~\ref{sec-semiflow}, with absolutely continuous invariant probability measure $\mu$, and moreover
\begin{itemize}
	\item[(v)]  There exist constants $C>0$, $\gamma_0\in(0,1)$ such that
$|f^n(y,z)-f^n(y,z')|\le C\gamma_0^n |z-z'|$ for all $y\in Y$, $z,z'\in Z$.
\end{itemize}

Let $\pi:X\to Y$ be the projection $\pi(y,z)=y$.  This defines a semiconjugacy between $f$ and $F$.  Note that $\{\pi^{-1}(y):y\in Y\}$ is an exponentially contracting stable foliation.

\begin{prop}  \label{prop-mux}
	Given $v:X\to\R$ continuous, define $v_+,v_-:Y\to\R$ by setting
$v_+(y)=\sup_zv(y,z)$,
$v_-(y)=\inf_zv(y,z)$.
Then the limits
\[
\lim_{n\to\infty}\int_Y (v\circ f^n)_+\,d\mu\quad\text{and}\quad
\lim_{n\to\infty}\int_Y (v\circ f^n)_-\,d\mu
\]
exist and coincide for all $v$ continuous.  Denote the common limit by $\int_Xv\,d\mu_X$;
this defines an $f$-invariant ergodic probability measure $\mu_X$ on $X$.
Moreover,
$\pi_*\mu_X=\mu$.
\end{prop}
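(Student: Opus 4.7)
The plan is to exploit the uniform contraction along fibers (condition~(v)) together with the ergodicity of the base map $F$ on $(Y,\mu)$. The driving observation is that two points $(y,z),(y,z')$ in the same fiber $\pi^{-1}(y)$ stay in a common fiber under all iterates of $f$, and (v) combined with compactness of $X$ and uniform continuity of $v$ forces the oscillation $(v\circ f^n)_+(y)-(v\circ f^n)_-(y)$ to tend to $0$ uniformly in $y\in Y$.

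Next I would establish a monotonicity property. Since $f^{n+1}(y,z)=f^n(Fy,G(y,z))$ and $G(y,\cdot)$ maps $Z$ into $Z$,
\[
(v\circ f^{n+1})_+(y)=\sup_z v(f^n(Fy,G(y,z)))\le (v\circ f^n)_+(Fy).
\]
Integrating against the $F$-invariant measure $\mu$, the sequence $\int_Y(v\circ f^n)_+\,d\mu$ is decreasing and bounded below by $-|v|_\infty$, hence convergent; symmetrically $\int_Y(v\circ f^n)_-\,d\mu$ is increasing and convergent, and the uniform oscillation bound forces the two limits to agree. The resulting functional $v\mapsto\int_X v\,d\mu_X$ is linear (as a difference of decreasing $\sup$-functionals applied to $v$ and $-v$ gives agreement of $\inf$- and $\sup$-limits), positive, and sends $1$ to $1$, so by the Riesz representation theorem it defines a Borel probability measure $\mu_X$ on $X$.

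The $f$-invariance of $\mu_X$ is immediate from $((v\circ f)\circ f^n)_+=(v\circ f^{n+1})_+$ and reindexing. The projection identity $\pi_*\mu_X=\mu$ follows for $v=\phi\circ\pi$: then $(v\circ f^n)_\pm(y)=\phi(F^ny)$ depends only on $y$, and $F$-invariance of $\mu$ gives $\int_Y\phi\circ F^n\,d\mu=\int_Y\phi\,d\mu$.

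The main obstacle is ergodicity. My approach would be to show that every $f$-invariant $\phi\in L^1(\mu_X)$ factors through $\pi$. Given $\eps>0$, pick a continuous $v$ with $\|\phi-v\|_{L^1(\mu_X)}<\eps$; since $\phi\circ f=\phi$, also $\|v\circ f^n-\phi\|_{L^1(\mu_X)}<\eps$ for every $n$. By the uniform fiber-oscillation bound, for all sufficiently large $n$ the function $v\circ f^n$ is uniformly within $\eps$ of the continuous function $(v\circ f^n)_+\in\pi^*C(Y)$, so $\phi$ lies within $2\eps$ of $\pi^*C(Y)$ in $L^1(\mu_X)$. Since $\eps$ is arbitrary, $\phi$ lies in the $L^1$-closure of $\pi^*C(Y)$, which is $\pi^*L^1(\mu)$. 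Writing $\phi=\psi\circ\pi$, the invariance relation reduces to $\psi\circ F=\psi$ $\mu$-a.e., and ergodicity of $(F,\mu)$ yields $\psi$ a.e.\ constant, completing the argument.
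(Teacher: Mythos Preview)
Your argument is correct and self-contained. The paper, by contrast, does not give a proof at all: it simply refers the reader to \cite[Section~6]{APPV09}. Your approach---uniform decay of the fiber oscillation $(v\circ f^n)_+-(v\circ f^n)_-$ from condition~(v), monotonicity of $\int_Y(v\circ f^n)_\pm\,d\mu$ via the inclusion $G(y,Z)\subset Z$ and $F$-invariance of $\mu$, Riesz representation, and then the factorisation of $f$-invariant $L^1$ functions through $\pi$ to reduce ergodicity to that of $(F,\mu)$---is the standard construction of the invariant measure on a skew product with contracting fibers, and is essentially what one finds in the cited reference. Two small points worth tightening in a final write-up: (a) the linearity remark is slightly cryptic; the clean way is to squeeze using $(v+w)_-\ge v_-+w_-$ and $(v+w)_+\le v_++w_+$ together with the equality of the $\pm$ limits for each summand; (b) since $F$ (and hence $f$) is only piecewise continuous, $(v\circ f^n)_+$ is continuous on $Y$ off the countable set of partition endpoints and their preimages, but this null set is harmless for all the integrals and $L^1$ approximations involved.
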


\begin{proof}
See~\cite[Section~6]{APPV09}.
\end{proof}

Recall that $L=L_0$ denotes the normalised transfer operator for $F:Y\to Y$.

\begin{prop} \label{prop-disint}
	(a) Suppose $v\in C^0(X)$.
 Then the limit
 \[
  \eta_y(v)=\lim_{n\to\infty} (L^nv_n)(y),\quad v_n(y)=v\circ f^n(y,0),
       \]
exists for almost every $y\in Y$ and defines a probability measure supported on $\pi^{-1}(y)$.
Moreover $y\mapsto \eta_y(v)$ is integrable and
\begin{align}
\label{eq:defeta}
\int_X v\,d\mu_X = \int_Y \int_{\pi^{-1}(y)}v\,d\eta_y \ d\mu(y).
\end{align}

\vspace{1ex}
\noindent(b) 
	There exists $C>0$ such that,
	for any $v\in C^\alpha(X)$, the function $y\mapsto \bar v(y) =  \int_{\pi^{-1}(y)}v\,d\eta_y$ lies in $C^\alpha(Y)$ and
	$\|\bar{v}\|_\alpha \leq C \|v\|_\alpha$.
\end{prop}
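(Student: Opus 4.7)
The plan is to establish part (a) by showing that $L^n v_n$ converges uniformly on $Y$ for $v\in C^\alpha(X)$, then extending to $v\in C^0(X)$ by density; the measure-theoretic properties and disintegration formula follow as consequences. Part (b) will reduce to a uniform-in-$n$ H\"older estimate on $L^n v_n$. Throughout I use that $L=L_0$ is the normalised transfer operator for $F$, so $L1=1$ (hence $|L^n g|_\infty\le|g|_\infty$) and $\int L^n g\,d\mu=\int g\,d\mu$.

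For the uniform Cauchy property I would compare $L^{n+m}v_{n+m}=L^n(L^m v_{n+m})$ with $L^n v_n$. For any $y_1\in Y$ and $y'\in F^{-m}(y_1)$, $f^{n+m}(y',0)=f^n(y_1,G_m(y',0))$, so condition~(v) yields
\[
|v_{n+m}(y')-v_n(y_1)|\le \|v\|_\alpha\,|f^n(y_1,G_m(y',0))-f^n(y_1,0)|^\alpha
\le C\|v\|_\alpha(\diam Z)^\alpha\gamma_0^{n\alpha}.
\]
Averaging over preimages via the Markov operator $L^m$ preserves this bound, and so does a further application of $L^n$; hence $L^n v_n$ is uniformly Cauchy in $C^0(Y)$. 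For $v\in C^0(X)$, density of $C^\alpha(X)$ combined with the $L^\infty$-contraction of $L^n$ extends the conclusion. The limit $\eta_y(v)=\bar v(y)$ is positive, linear, and normalised, hence a Borel probability on $X$ by Riesz. To see $\operatorname{supp}\eta_y\subset\pi^{-1}(y)$, observe that $(v\cdot\psi\circ\pi)_n=v_n\cdot(\psi\circ F^n)$ for $\psi\in C^0(Y)$, so $L^n((v\cdot\psi\circ\pi)_n)=\psi\cdot L^n v_n$; passing to the limit gives $\eta_y(v\cdot\psi\circ\pi)=\psi(y)\eta_y(v)$. Integration then yields $\int_Y L^n v_n\,d\mu=\int_Y v_n\,d\mu$, which is sandwiched between $\int_Y(v\circ f^n)_\pm\,d\mu$ and thus converges to $\int_X v\,d\mu_X$ by Proposition~\ref{prop-mux}; uniform convergence identifies the limit with $\int_Y\eta_y(v)\,d\mu$, establishing~\eqref{eq:defeta}.

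For part (b), the plan is to bound $|L^n v_n|_\alpha$ uniformly in $n$. Expanding in inverse branches and using $F^n\circ h=\operatorname{id}$,
\[
(L^n v_n)(y)=\sum_{h\in\cH_n}p_{h,n}(y)V_h(y),\qquad V_h(y)=v(y,G_n(h(y),0)),
\]
where $p_{h,n}=(f_0\circ h)|h'|/f_0$ and $\sum_h p_{h,n}=1$. Writing
\[
(L^n v_n)(y)-(L^n v_n)(y')=\sum_h p_{h,n}(y)\bigl[V_h(y)-V_h(y')\bigr]+\sum_h\bigl[p_{h,n}(y)-p_{h,n}(y')\bigr]V_h(y'),
\]
the second sum is $\lesssim|v|_\infty|y-y'|^\alpha$ via the standard ratio estimate $p_{h,n}(y)/p_{h,n}(y')\le\exp(K|y-y'|^\alpha)$, which follows from (ii$_1$) and H\"older regularity of $\log f_0$. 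The first sum reduces, after controlling the diagonal H\"older behaviour $|v(y,z)-v(y',z)|\le\|v\|_\alpha|y-y'|^\alpha$, to the uniform-in-$n$ estimate
\[
|G_n(h(y),0)-G_n(h(y'),0)|\le K'|y-y'|,
\]
which gives $|V_h(y)-V_h(y')|\lesssim\|v\|_\alpha|y-y'|^\alpha$. Combined with $|\bar v|_\infty\le|v|_\infty$, this produces $\|\bar v\|_\alpha\le C\|v\|_\alpha$.

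The crux is the uniform Lipschitz bound on $G_n(h(\cdot),0)$. I would prove it by tracking parallel forward orbits of $(h(y),0)$ and $(h(y'),0)$: writing $y_k=F^k h(y)$, $z_k=G_k(h(y),0)$ (with primed analogues), condition~(i) applied to the inverse branch $F^k\circ h\in\cH_{n-k}$ gives $|y_k-y'_k|\le C_1\rho_0^{n-k}|y-y'|$, and the recursion $|z_{k+1}-z'_{k+1}|\le L_1|y_k-y'_k|+L_2|z_k-z'_k|$ (with $L_2$ the Lipschitz constant of $G$ in $z$) telescopes to the desired bound once one ensures $L_2\rho_0<1$ --- either by rescaling the metric on $Z$ or, using (v), by passing to a suitable iterate $f^N$. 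This is the only step where the geometry of the skew product enters nontrivially.
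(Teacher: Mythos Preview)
The paper does not prove this proposition directly; it cites Propositions~3 and~6 of Butterley--Melbourne~\cite{ButterleyMapp}. Your self-contained argument is therefore a genuine addition, and part~(a) is correct as written.

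Part~(b) is essentially correct, but the final step is not quite clean. Your one-step recursion gives
\[
|z_n-z_n'|\le L_1C_1\rho_0\,|y-y'|\sum_{m=0}^{n-1}(L_2\rho_0)^m,
\]
and you need $L_2\rho_0<1$ for this to be uniformly bounded. Of your two proposed fixes, the metric rescaling is not straightforward: condition~(v) is a \emph{cocycle} contraction (the fiber maps $G(y,\cdot)$ vary with $y$), and the usual adapted-metric construction for a single map does not carry over, since a composition $G_n(y',G(y_0,\cdot))$ with $y'\neq Fy_0$ is not governed by~(v). The iterate fix does work, but then you must also treat $m$ not divisible by $N$, which adds bookkeeping.

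A cleaner route uses~(v) directly and avoids any hypothesis on $L_2$. Set
\[
K_n=\sup_{h\in\cH_n}\ \sup_{z\in Z}\ \sup_{y\neq y'}\frac{|G_n(h(y),z)-G_n(h(y'),z)|}{|y-y'|}.
\]
For $h\in\cH_n$ write $\tilde h=F\circ h\in\cH_{n-1}$ and $z_1=G(h(y),z)$, $z_1'=G(h(y'),z)$, so that $G_n(h(y),z)=G_{n-1}(\tilde h(y),z_1)$. Splitting
\[
|z_n-z_n'|\le |G_{n-1}(\tilde h(y),z_1)-G_{n-1}(\tilde h(y),z_1')|+|G_{n-1}(\tilde h(y),z_1')-G_{n-1}(\tilde h(y'),z_1')|,
\]
condition~(v) bounds the first term by $C\gamma_0^{\,n-1}|z_1-z_1'|\le CL_1C_1\gamma_0^{\,n-1}\rho_0^{\,n}|y-y'|$, while the second is at most $K_{n-1}|y-y'|$. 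Hence $K_n\le K_{n-1}+CL_1C_1\gamma_0^{\,n-1}\rho_0^{\,n}$, which telescopes to a uniform bound since $\gamma_0\rho_0<1$ automatically.
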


\begin{proof}  This follows from 
Propositions~3 and~6 respectively of~\cite{ButterleyMapp}.
\end{proof}

\paragraph{Hyperbolic skew product flows}
Suppose that $R:Y\to\R^+$ is $C^1$ on partition elements $(c_m,d_m)$ with
$\inf R>0$.  Define $R:X\to\R^+$ by setting
$R(y,z)=R(y)$.
Define the suspension $X^R=\{(x,u)\in X\times\R:0\le u\le R(x)\}/\sim$
where $(x,R(x))\sim(fx,0)$.  The suspension flow
$f_t:X^R\to X^R$ is given by $f_t(x,u)=(x,u+t)$ computed modulo identifications,
 with ergodic invariant probability measure 
$\mu_X^R=(\mu_X\times\Leb)/\bar R$.

We say that $f_t$ is a {\em $C^{1+\alpha}$ hyperbolic skew product flow} provided
$f:X\to X$ is a $C^{1+\alpha}$ uniformly hyperbolic skew product as above, and
$R:Y\to\R^+$ satisfies conditions~(iii) and~(iv) as in Section~\ref{sec-semiflow}.
If $F:Y\to Y$ and $R:Y\to\R^+$ satisfy condition UNI from Section~\ref{sec-semiflow}, then we say that the skew product flow $f_t$ satisfies UNI.

\paragraph{Function space}
Define $F_\alpha(X^R)$ to consist of $L^\infty$ functions
$v:X^R\to \R$ such that 
$\|v\|_\alpha=|v|_\infty+|v|_\alpha<\infty$ where
\[
|v|_\alpha=\sup_{(y,z,u)\neq(y',z',u)}\frac{|v(y,z,u)-v(y',z',u)|}{(|y-y'|+|z-z'|)^\alpha}.
\]
Define $F_{\alpha,k}(X^R)$ to consist of functions
with $\|v\|_{\alpha,k}=\sum_{j=0}^k \|\partial_t^jv\|_\alpha<\infty$ where $\partial_t$ denotes differentiation along the flow direction.

We can now state the main result.
Given $v\in L^1(X^R)$, $w\in L^\infty(X^R)$, define the correlation function
\[
\rho_{v,w}(t)=\int v\,w\circ f_t\,d\mu_X^R
-\int v\,d\mu_X^R \int w\,d\mu_X^R.
\]

\begin{thm} \label{thm-flow}
Assume that $f_t:X\to X$ is a $C^{1+\alpha}$ hyperbolic skew product flow
satisfying the UNI condition.
Then there exist constants $c,C>0$ such that
\[
|\rho_{v,w}(t)|\le C e^{-c t}\|v\|_{\alpha,2}\|w\|_\alpha,
\]
for all $v\in F_{\alpha,2}(X^R)$, $w\in F_\alpha(X^R)$, $t>0$.
\end{thm}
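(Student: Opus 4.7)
The plan is to reduce Theorem~\ref{thm-flow} to Theorem~\ref{thm-semiflow} by exploiting the exponential stable contraction in condition~(v), in the spirit of~\cite{AGY06}. The key tool is the semiconjugacy $\pi^R:X^R\to Y^R$, $\pi^R(y,z,u)=(y,u)$, which intertwines $f_t$ with $F_t$ and pushes $\mu_X^R$ forward to $\mu^R$ by Proposition~\ref{prop-mux}. For $v\in F_{\alpha,2}(X^R)$ the first step is to form the fibre average
\[
\bar v(y,u)=\int_Z v(y,z,u)\,d\eta_y(z),
\]
using the disintegration of Proposition~\ref{prop-disint}. Applying Proposition~\ref{prop-disint}(b) slice by slice in $u$, and differentiating under the integral sign, gives $\|\bar v\|_{\alpha,2}\ll\|v\|_{\alpha,2}$, so $\bar v$ is a valid input for Theorem~\ref{thm-semiflow}.

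The second step is to approximate $w\circ f_s$ by an $L^\infty$ function on $Y^R$. Fix a reference $z_0\in Z$ and set $\tilde w_s(y,u)=w(f_s(y,z_0,u))$. Writing $f_s(y,z,u)=(F^n y,G^{(n)}(y,z),u+s-R_n(y))$, where $n=n(y,u,s)$ depends only on the base orbit, condition~(v) yields the pointwise bound
\[
|w\circ f_s(y,z,u)-\tilde w_s\circ\pi^R(y,z,u)|\le C\|w\|_\alpha\gamma_0^{n\alpha}.
\]
Splitting $X^R$ according to whether $n\le k$ or $n>k$ for $k=\lfloor s/(2\bar R)\rfloor$, the first piece has measure bounded by $\mu(R_k>s)$, which is exponentially small in $s$ by applying Chebyshev to $e^{\eps R_k}$ and controlling $\int e^{\eps R_k}\,d\mu$ by $\lambda_{-\eps}^k$ (via the twisted transfer operator $Q_{-\eps}=f_0^{-1}P_{-\eps}(f_0\,\cdot\,)$ together with Remark~\ref{rmk-P}); on the second piece the bound $\gamma_0^{k\alpha}$ is uniform. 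Together these give the $L^1$ estimate $\int_{X^R}|w\circ f_s-\tilde w_s\circ\pi^R|\,d\mu_X^R\le C\|w\|_\alpha e^{-c_1 s}$ for some $c_1>0$.

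With these two ingredients in hand, the correlation factorises. Using $w\circ f_t=(w\circ f_s)\circ f_{t-s}$, the $\mu_X^R$-invariance of $f_{t-s}$, and the intertwining $F_{t-s}\circ\pi^R=\pi^R\circ f_{t-s}$, one obtains
\[
\int v\cdot w\circ f_t\,d\mu_X^R=\int\bar v\cdot(\tilde w_s\circ F_{t-s})\,d\mu^R+E_1,\qquad |E_1|\le C|v|_\infty\|w\|_\alpha e^{-c_1 s}.
\]
Theorem~\ref{thm-semiflow} applied to the pair $(\bar v,\tilde w_s)\in F_{\alpha,2}(Y^R)\times L^\infty(Y^R)$ then yields
\[
\Bigl|\int\bar v\cdot(\tilde w_s\circ F_{t-s})\,d\mu^R-\mu^R(\bar v)\mu^R(\tilde w_s)\Bigr|\le Ce^{-c_0(t-s)}\|v\|_{\alpha,2}|w|_\infty.
\]
Since $\mu^R(\bar v)=\mu_X^R(v)$, and the same $L^1$ bound applied with $v\equiv 1$ gives $|\mu^R(\tilde w_s)-\mu_X^R(w)|\le C\|w\|_\alpha e^{-c_1 s}$, choosing $s=c_0 t/(c_0+c_1)$ balances the two exponents and delivers $|\rho_{v,w}(t)|\le Ce^{-ct}\|v\|_{\alpha,2}\|w\|_\alpha$ with $c=c_0c_1/(c_0+c_1)$.

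The main obstacle is the $L^1$ stable-approximation step. Because the roof $R$ is unbounded, there is no uniform lower bound on the number of Poincar\'e returns in continuous time $s$, so the naive pointwise H\"older estimate breaks down on orbits trapped in long cylinders. The remedy trades a pointwise estimate on orbits with many returns against a small-measure tail bound on the exceptional set $\{R_k>s\}$, and this in turn hinges on delicate use of the exponential moment condition~(iv) via the spectral properties of the twisted transfer operator $Q_{-\eps}$ collected in Remark~\ref{rmk-P}.
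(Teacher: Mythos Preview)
Your proposal is correct and follows the same route as the paper: fibre-average $v$ via Proposition~\ref{prop-disint}, approximate $w\circ f_s$ by a function on $Y^R$ using the stable contraction together with an exponential-moment tail bound on the visit-counting function, and then invoke Theorem~\ref{thm-semiflow} on the quotient semiflow. The only cosmetic differences are that the paper fibre-averages $w\circ f_t$ over $\eta_y$ rather than evaluating at a reference point $z_0$, and packages your tail estimate as Proposition~\ref{prop-visit} (a bound on $\int_{Y^R}\gamma^{\psi_t}\,d\mu^R$ via exactly the same Chebyshev/$\lambda_{-\eps}$ mechanism you describe).
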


\begin{rmk}  Again, it follows by interpolation (as in Corollary~\ref{cor-main})
that if the suspension flow is an extension of an ergodic flow on a compact Riemannian manifold $M$, then exponential decay of correlations holds for H\"older
observables $v$, $w:M\to\R$.
\end{rmk}

In the remainder of this section, we prove Theorem~\ref{thm-flow}
following~\cite{AGY06}.
Let $\gamma=\gamma_0^\alpha$.
Define $\psi_t:Y^R\to\Z^+$ to be the number of visits to $Y$ by time $t$:
\[
\psi_t(y,u)=\max\{n\ge0:u+t>R_n(y)\}.
\]

\begin{prop} \label{prop-visit}
There exist $\delta$, $C>0$ such that $\int_{Y^R} \gamma^{\psi_t}\,d\mu^R\le Ce^{-\delta t}$ for all $t>0$.
\end{prop}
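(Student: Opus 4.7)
\emph{Proof plan.} The plan is a standard large-deviation argument exploiting that $\psi_t\sim t/\bar R$ on a set of overwhelming measure, so $\gamma^{\psi_t}$ decays exponentially in $t$ in the mean. For a small parameter $\eta>0$ to be chosen later, I split
\[
\int_{Y^R}\gamma^{\psi_t}\,d\mu^R\le \gamma^{\eta t}+\mu^R(\{\psi_t<\eta t\}).
\]
The first term already decays like $e^{-\eta|\log\gamma|t}$, so it remains to control the measure of the ``slow'' set $\{\psi_t<\eta t\}$.

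By definition $\psi_t(y,u)<n$ means $R_n(y)>u+t$, so integrating out $u\in[0,R(y)]$ gives
\[
\mu^R(\psi_t<\eta t)\le \bar R^{-1}\int_Y R(y)\,\mathbf{1}_{\{R_{\lceil\eta t\rceil}(y)>t\}}\,d\mu.
\]
Apply exponential Chebyshev with parameter $s>0$ together with the elementary inequality $R\le s^{-1}e^{sR}\le s^{-1}e^{sR_n}$ (valid for $n\ge1$ since $R\le R_n$) to bound this by $(s\bar R)^{-1}e^{-st}\int_Y e^{2sR_n}\,d\mu$ with $n=\lceil\eta t\rceil$. For the exponential moment I use the identity
\[
\int_Y e^{2sR_n}\,d\mu=\int_Y P_{-2s}^n f_0\,d\Leb,
\]
which follows from the duality $\int P_\sigma^n v\cdot w\,d\Leb=\int v\,e^{-\sigma R_n}w\circ F^n\,d\Leb$ on taking $v=f_0$, $w=1$, $\sigma=-2s$. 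By Proposition~\ref{prop-P} and Remark~\ref{rmk-P} (applied with $s>0$ smaller than the $\eps$ there), $P_{-2s}$ has a simple leading eigenvalue $\lambda_{-2s}$ satisfying $\lambda_{-2s}\to 1$ as $s\to 0^+$, and the spectral gap yields $|P_{-2s}^n f_0|_\infty\le C\lambda_{-2s}^n$. Since $\log\lambda_{-2s}=O(s)$ by perturbation theory, I can choose $\eta$ small enough and then $s>0$ small enough that the combined exponent $-s+\eta\log\lambda_{-2s}$ is strictly negative, producing $\mu^R(\psi_t<\eta t)\le C e^{-\delta' t}$ for some $\delta'>0$. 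Combining with the first term gives the claim with $\delta=\min\{\eta|\log\gamma|,\delta'\}$.

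The main technical point is handling the linear prefactor $R(y)$ coming from integration over the fiber $[0,R(y)]$; the trick $R\le s^{-1}e^{sR}$ is precisely what allows it to be absorbed into the exponential moment $\int e^{2sR_n}\,d\mu$. The other ingredient, the uniform spectral-radius estimate $|P_{-2s}^n f_0|_\infty\ll\lambda_{-2s}^n$, is immediate from the perturbative spectral theory of $P_s$ already developed in Subsection~\ref{sec-P}, together with the quasicompactness of $P_0$ on $C^\alpha(Y)$. No deeper ingredient than what is already in the paper is required.
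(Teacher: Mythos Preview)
Your proof is correct, and the core ingredient matches the paper's: the exponential-moment bound $\int_Y e^{\delta R_n}\,d\mu\ll\lambda_{-\delta}^n$ coming from the spectral radius of the twisted transfer operator, applied after an exponential Chebyshev step. The decomposition differs. The paper expands
\[
\int_{Y^R}\gamma^{\psi_t}\,d\mu^R=\sum_{n\ge0}\gamma^n\mu^R(\psi_t=n)\le\bar R^{-1}\sum_{n\ge0}\gamma^n\int_Y R\,\mathbf{1}_{\{R_{n+1}>t\}}\,d\mu,
\]
separates the prefactor $R$ by Cauchy--Schwarz ($\int R\,\mathbf{1}_{\{R_n>t\}}\le|R|_2\,\mu(R_n>t)^{1/2}$), applies Markov, and then sums the geometric series $\sum_n(\gamma\lambda_{-\delta}^{1/2})^n$ after shrinking $\delta$ so that $\gamma\lambda_{-\delta}^{1/2}<1$. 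Your single-cutoff split at $\eta t$ together with the absorption $R\le s^{-1}e^{sR}\le s^{-1}e^{sR_n}$ reaches the same conclusion; the paper's route avoids introducing the auxiliary parameter $\eta$ and the ceiling, while yours bypasses Cauchy--Schwarz and so does not need $|R|_2<\infty$ as a separate input (though condition~(iv) already gives much more). Both variants are standard and equally valid here.
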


\begin{proof}
Write
\begin{align*}
\int_{Y^R} \gamma^{\psi_t}\,d\mu^R
& = \sum_{n=0}^\infty \gamma^n \mu^R\{(y,u):R_n(y)<u+t\le R_{n+1}(y)\}
\\ & \le \sum_{n=0}^\infty \gamma^n \mu^R\{(y,u):t\le R_{n+1}(y)\}
= (1/\bar R)\sum_{n=0}^\infty  \gamma^n
\int_Y R\, 1_{\{R_{n+1}>t\}}\,d\mu.
\end{align*}
By Cauchy-Schwarz and Markov's inequality,
\begin{align*}
\int_Y R\, 1_{\{R_n>t\}}\,d\mu
& \le |R|_2\, \mu(R_n>t)^{1/2}
= |R|_2\, \mu(e^{\delta R_n}>e^{\delta t})^{1/2}
\\ & \le |R|_2\, e^{-\frac12\delta t}\Bigl(\int_Y e^{\delta R_n}\,d\mu\Bigl)^{1/2}.
\end{align*}

Recall that the normalised twisted transfer operator $L_\sigma$ is defined for
$\sigma\in\R$ near~$0$ with leading eigenvalue $\lambda_\sigma$ satisfying $\lambda_0=1$.  We have
\[
 \int_Y e^{\delta R_n}\,d\mu 
 =\int_Y L_0^ne^{\delta R_n}\,d\mu 
 =\int_Y L_{-\delta}^n1\,d\mu,
\]
so
 $\int_Y e^{\delta R_n}\,d\mu \le \lambda_{-\delta}^n$.
 Since $\lambda_0=1$, we can shrink $\delta$ so that $\tilde\gamma=\gamma\lambda_{-\delta}^{1/2}<1$.
Then
\begin{align*}
\int_{Y^R} \gamma^{\psi_t}\,d\mu^R\ll 
\sum_{n=0}^\infty \gamma^n e^{-\frac12\delta t} \lambda_{-\delta}^{n/2}=
\sum_{n=0}^\infty \tilde\gamma^n e^{-\frac12\delta t} 
\ll e^{-\frac12\delta t},
\end{align*}
as required.
\end{proof}

\begin{pfof}{Theorem~\ref{thm-flow}}
Without loss, we may suppose that $\int_{X^R} v\,d\mu_X^R=0$.
Define the semiconjugacy $\pi^R:X^R\to Y^R$, $\pi^R(x,u)=(\pi x,u)$, so
$F_t\circ \pi^R=\pi^R\circ f_t$ and $\pi^R_*\mu_X^R=\mu^R$.
Define $w_t:Y^R\to\R$ by setting 
\[
w_t(y,u)=\int_{x\in\pi^{-1}(y)}w\circ f_t(x,u)\,d\eta_y(x).
\]
Then $\rho_{v,w}(2t)=I_1(t)+I_2(t)$, where
\begin{align*}
I_1(t) & = \int_{X^R} v\,w\circ f_{2t}\,d\mu_X^R-
 \int_{X^R} v\,w_t\circ F_t\circ\pi^R\,d\mu_X^R, \\
 I_2(t) & = \int_{X^R} v\,w_t\circ F_t\circ\pi^R\,d\mu_X^R.
\end{align*}

Now $I_1(t)= \int_{X^R} v\,(w\circ f_t- w_t\circ \pi^R)\circ f_t\,d\mu_X^R$, so
$|I_1(t)|\le |v|_\infty \int_{X^R} |w\circ f_t- w_t\circ \pi^R|\,d\mu_X^R$.
Using the definitions of $\pi^R$ and $w_t$,
\begin{align*}
w\circ f_t(x,u)- w_t\circ \pi^R(x,u) & =
w\circ f_t(x,u)- w_t(\pi x,u)\\  & =
\int_{x'\in\pi^{-1}(\pi x)}
(w\circ f_t(x,u)- w\circ f_t (x',u))\,d\eta_{\pi(x)}(x').
\end{align*}
Recall that $\gamma=\gamma_0^\alpha$. It follows from condition (v) that
\begin{align*}
|w\circ f_t(x,u)- w_t\circ \pi^R(x,u)| & \ll 
\int_{x'\in\pi^{-1}(\pi x)}|w|_\alpha \gamma^{\psi_t(\pi x,u)}d\eta_{\pi(x)}(x')
\\ & =|w|_\alpha \gamma^{\psi_t(\pi x,u)}
=|w|_\alpha \gamma^{\psi_t}\circ\pi^R(x,u).
\end{align*}
Hence 
\begin{align*}
|I_1(t)| & 
\ll |v|_\infty|w|_\alpha\int_{X^R} \gamma^{\psi_t}\circ\pi^R\,d\mu_X^R
 =|v|_\infty|w|_\alpha\int_{Y^R} \gamma^{\psi_t}\,d\mu^R.
\end{align*}
By Proposition~\ref{prop-visit}, $|I_1(t)|\ll |v|_\infty|w|_\alpha e^{-\delta t}$ for some $\delta>0$.

Next, define $\bar v:Y^R\to \R$ by setting
$\bar v(y,u)=\int_{x\in\pi^{-1}(y)} v(x,u)\,d\eta_y(x)$.
Since $\int_{X^R}v\,d\mu_X^R=0$, it follows from Proposition~\ref{prop-disint}(a) that
$\int_{Y^R}\bar v\,d\mu^R=0$.  Moreover,
$I_2(t)=\int_{Y^R}\bar v\,w_t\circ F_t\,d\mu^R
=\bar\rho_{\bar v,w_t}(t)$ where
$\bar\rho$ denotes the correlation function on $Y^R$.
By Proposition~\ref{prop-disint}(b), $\bar v\in F_{\alpha,2}(Y^R)$ and $\|\bar v\|_{\alpha,2}\ll \|v\|_{\alpha,2}$.
Clearly, $|w_t|_\infty\le |w|_\infty$.
Hence it follows from Theorem~\ref{thm-semiflow} that there exists $c>0$ such that
$|I_2(t)|\ll e^{-ct}\|\bar v\|_{\alpha,2}|w_t|_\infty
\ll  e^{-ct}\|v\|_{\alpha,2}|w|_\infty$ completing the proof.
\end{pfof}

\section{Applications to smooth flows}
\label{sec-app}

In this section, we mention applications of our results to certain open sets of Lorenz flows and Axiom~A flows in $\R^3$.

\subsection{Lorenz-like flows}

We consider $C^{\infty}$ vector fields $\fX:\R^3\to\R^3$ possessing an
equilibrium $p$ which is \emph{Lorenz-like}:
the eigenvalues of $(D\fX)_p$ are real and satisfy
\begin{align}\label{eq:Lorenz-like-equil}
  \lambda_{ss} < \lambda_s < 0 < -\lambda_s < \lambda_u.
\end{align}

The definition of geometric Lorenz attractor is fairly standard, and implies in particular that there is a robust topologically transitive attractor containing
the equilibrium $p$.  By Morales~{\em et al.}~\cite{MoralesPacificoPujals04}, such an attractor is {\em singular hyperbolic} with a dominated splitting into a one-dimensional uniformly contracting subbundle and a a two-dimensional subbundle with uniform expansion of area.  It follows that there is a uniformly contracting strong stable foliation $\cF^{ss}$ for the flow and a uniformly contracting stable foliation $\cW_g^s$ for the associated Poincar\'e map $g$.
(We refer to~\cite{MoralesPacificoPujals04} for a precise statement of these properties.  See also~\cite{AraujoPacifico}.)
Tucker~\cite{Tucker02} showed that the classical Lorenz attractor is an example of a geometric Lorenz attractor.

Quotienting $g$ along stable leaves in $\cW_g^s$ leads to a one-dimensional
map $\bar g$.
Tucker~\cite{Tucker02} proved moreover that for the classical Lorenz equation $\fX_0$, and nearby vector fields, the one dimensional map $\bar g$
is {\em locally eventually onto (l.e.o.)}.  
For convenience, we say that $\fX$ satisfies l.e.o.\ if $\bar g$ satisfies l.e.o.

It is often the case that a smoothness assumption is imposed on the foliation
$\cW_g^s$.  Here we require smoothness also of $\cF^{ss}$.
Following~\cite{AMV15}, we say that $\fX$ is {\em strongly dissipative} if
  the divergence of the vector field $\fX$ is strictly
  negative,
and moreover the eigenvalues of the singularity at $p$ satisfy the
additional constraint $\lambda_u+\lambda_{ss}<\lambda_s$.
By~\cite[Lemma~2.2]{AMV15} the foliation $\cF^{ss}$ (and hence the foliation 
$\cW_g^s$) is $C^{1+\alpha}$ for a strongly dissipative geometric Lorenz attractor.

\begin{rmk} \label{rmk-diss}
Strong dissipativity is clearly a $C^1$ open condition.
Moreover, for the classical Lorenz equations with vector field $\fX_0:\R^3\to\R^3$, we have
\[
\diver \fX_0\equiv -\textstyle{\frac{41}{3}}, \quad
\lambda_s=-\textstyle{\frac83}, \quad \lambda_u\approx 11.83, \quad \lambda_{ss}\approx -22.83, 
\]
so condition~\eqref{eq:Lorenz-like-equil} and strong dissipativity are
satisfied.  Consequently the foliations $\cF^{ss}$ and $\cW^s_g$ are $C^{1+\alpha}$ for $\fX_0$ and for nearby vector fields.
\end{rmk}

For $\alpha>0$, let $\cU_{1+\alpha}$ denote the set of $C^\infty$ vector fields
$\fX:\R^3\to\R^3$ possessing a geometric Lorenz attractor, satisfying the l.e.o.\ condition, with 
a $C^{1+\alpha}$ strong stable foliation $\cF^{ss}$ for the flow and hence
a $C^{1+\alpha}$ stable foliation $\cW^s_g$ for the Poincar\'e map.

For vector fields in $\cU_{1+\alpha}$, the quotient one-dimensional map $\bar g$ is a $C^{1+\alpha}$ nonuniformly expanding map with a ``Lorenz-like'' singularity 
corresponding to the Lorenz-like equilibrium.

Since $\bar g$ is l.e.o., we can
choose an interval $Y$ in the domain of $\bar g$ and an inducing time $\tau:Y\to\Z^+$ such that $F=\bar g^\tau:Y\to Y$ is a piecewise $C^{1+\alpha}$ uniformly expanding map
satisfying the assumptions in Section~\ref{sec-semiflow}.  
In particular, conditions~(i) and~(ii) are satisfied.  The absolutely continuous invariant probability measure for $F$ leads in a standard way to an SRB measure
$\mu$ supported on the attractor for the flow.
All mixing properties discussed below are with respect to $\mu$.

Since the strong stable foliation $\cF^{ss}$ is $C^{1+\alpha}$, it is possible as
in~\cite{ABV,AMV15,AraujoVarandas12} to choose a $C^{1+\alpha}$-embedded Poincar\'e section consisting of strong stable leaves.   The properties above
of $F=\bar g^\tau$ are unchanged, but the return time function $r$ to the Poincar\'e section is $C^{1+\alpha}$ and constant along stable leaves in $W^s_g$
and hence restricts to a $C^{1+\alpha}$ return time function, also denoted $r$, for the quotient system.
Inducing leads to a piecewise $C^{1+\alpha}$ induced return time function
$R:Y\to\R^+$ given by $R(y)=\sum_{j=0}^{\tau(y)-1}r(\bar g^jy)$.
Conditions~(iii) and~(iv) in Section~\ref{sec-semiflow} are verified
in~\cite[Section~4.2.2]{AraujoVarandas12} 
and~\cite[Section~4.2.1]{AraujoVarandas12} respectively.
Hence the quotient induced semiflow is a $C^{1+\alpha}$ expanding semiflow
as defined in Section~\ref{sec-semiflow}.
Similarly, 
the induced flow (starting with $g$ instead of $\bar g$ and using the same inducing time $\tau$) is a $C^{1+\alpha}$ hyperbolic skew product flow
as defined in Section~\ref{sec-flow}.
To establish exponential decay of correlations, it remains to verify the UNI condition.

\paragraph{Verification of UNI on $\cU_{1+\alpha}$, $\alpha>0$.}
	Ara\'ujo~{\em et al.}~\cite{AMV15} established joint nonintegrability of the stable and unstable foliations relative to a specific choice of inducing scheme (the ``double inducing scheme'' defined in~\cite[Section~3.1]{AMV15}). It is well-known~\cite{Dolgopyat98a,ABV} that joint nonintegrability is equivalent to UNI when the stable foliation is smooth.  For completeness we sketch the proof of the UNI condition.  Throughout, we assume that the inducing scheme is the one 
in~\cite[Section~3.1]{AMV15}.

Let $\alpha_0$ denote the partition $\{(c_m,d_m)\}$ of $Y$ in the definition of the uniformly expanding map $F:Y\to Y$.  Let $\alpha_1$ denote the corresponding partition of the cross-section $X$ for the Poincar\'e map $f$ for the induced flow, obtained from $\alpha_0$ by including stable leaves.
(In~\cite{AMV15}, the partition for $f$ is denoted $\alpha$. We use $\alpha_1$ here to avoid conflict with $\alpha>0$.)

Let $a\in\alpha_1$ be a partition element for $f$.
Following~\cite{AMV15}, the temporal distortion function $D:a\times a\to\R$ is defined almost everywhere on $a\times a$ by the formula
\begin{align*}
	D(y,z) & =\sum_{j=-\infty}^\infty \{r(g^jy)-r(g^j[y,z])-r(g^j[z,y])+r(g^jz)\} \\ &
=\sum_{j=-1}^\infty \{r(g^jy)-r(g^j[y,z])-r(g^j[z,y])+r(g^jz)\},
\end{align*}
where $[y,z]$ is the local product of $y$ and $z$, and the second equality follows since $r$ is constant along stable manifolds.
(Note that $f$, $F$, $\bar F$ in~\cite{AMV15} correspond to $g$, $f$, $F$ here.)
The main technical result of~\cite{AMV15} states that the stable and unstable foliations for the flow are not jointly integrable:

\begin{lemma} \label{lem-JNI}
There exists $a\in\alpha_1$ and $y,z\in a$ such that $D(y,z)\neq0$.
\end{lemma}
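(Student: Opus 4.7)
The plan is to argue by contradiction: suppose that $D(y,z)=0$ for almost every $(y,z)\in a\times a$ for every partition element $a\in\alpha_1$, and derive a contradiction with the specific structure of the Lorenz-like flow near the singularity $p$.

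The first step is to translate the vanishing of $D$ into a cohomological statement for the return time $r$. Writing
\[
D(y,z)=\sum_{j=-1}^\infty \bigl\{r(g^jy)-r(g^j[y,z])\bigr\} - \sum_{j=-1}^\infty \bigl\{r(g^j[z,y])-r(g^jz)\bigr\},
\]
the identity $D\equiv 0$ forces the stable-holonomy difference $r(\,\cdot\,)-r([\,\cdot\,,z])$ summed along forward $g$-orbits to coincide with the corresponding expression measured from $z$. Up to a measurable coboundary this is equivalent to a splitting $r=r_s+r_u+\chi\circ g-\chi$ in which $r_s$ is constant along unstable manifolds of $f$ and $r_u$ is constant along stable manifolds. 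Thus vanishing temporal distortion would mean that, modulo a cohomological correction, the roof function has a product structure compatible with the local product structure of the hyperbolic set.

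The second step is to localize near the Lorenz-like equilibrium $p$ and exploit the linearization. Orbits passing through the cross-section close to the local stable manifold $W^s_{\rm loc}(p)$ at unstable distance $\xi>0$ return only after time asymptotic to $-\lambda_u^{-1}\log\xi+\text{bounded}$, so $r$ has a logarithmic singularity in the unstable coordinate and is, to leading order, independent of the strong-stable coordinate. I would choose two sequences $y_n, z_n$ in a single partition element $a$ converging to $W^s_{\rm loc}(p)$ in such a way that $[y_n,z_n]$ and $[z_n,y_n]$ sit at controlled but different distances $\xi_n,\xi_n'$ from $W^s_{\rm loc}(p)$. Using the linearized flow to extract the dominant $-\lambda_u^{-1}\log$-contributions of the four terms in each summand, the $j=-1$ term (corresponding to the passage immediately preceding the singularity) produces a leading contribution of order $\log(\xi_n/\xi_n')$ that cannot be cancelled by the bounded tail $\sum_{j\ge 0}$, since the strong dissipativity condition $\lambda_u+\lambda_{ss}<\lambda_s$ forces the stable holonomy between $y_n$ and $[y_n,z_n]$ to move the unstable coordinate by an amount of the same logarithmic order but with a coefficient ruled by a different ratio of eigenvalues.

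The main obstacle is the last step: rigorously separating the logarithmic main term from the bounded error and verifying that the coefficient of $\log(\xi_n/\xi_n')$ is genuinely nonzero. This requires careful control of the local coordinates supplied by the Hartman--Grobman/Sternberg linearization, together with the explicit geometric description of the Poincar\'e section built from strong-stable leaves. An alternative, and probably cleaner, route is to appeal to a Liv\v{s}ic-type rigidity argument: the hypothetical decomposition $r=r_s+r_u+\chi\circ g-\chi$ would force the periods of all periodic orbits of the suspension flow (and of all periodic orbits of $g$ paired via local product coordinates) to satisfy a rank-one linear relation; producing two periodic orbits of $g$ whose periods violate this relation, using the asymmetry introduced by a single excursion near $p$, then contradicts $D\equiv 0$ without any explicit asymptotic computation.
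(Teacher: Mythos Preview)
Your proposal takes a completely different route from the paper. In the paper, Lemma~\ref{lem-JNI} is not proved from scratch at all: it is extracted from \cite[Theorem~3.4]{AMV15}. The only work done here is to pass from the Young tower $\hat f:\Delta\to\Delta$ (where \cite{AMV15} produces points $p=(y,\ell)$, $q=(z,\ell)$ with $D(p,q)\neq 0$) back down to the base, via the elementary computation $D((y,\ell),(z,\ell))=D(y,z)$. All of the dynamical content---the interaction of the logarithmic singularity of $r$ with the local product structure---is outsourced to \cite{AMV15}.

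What you are attempting is essentially to reprove the cited theorem. That is a legitimate project, but your sketch has real gaps. First, the ``cohomological translation'' in your Step~1 is asserted, not proved: the implication from $D\equiv 0$ to a splitting $r=r_s+r_u+\chi\circ g-\chi$ is standard for uniformly hyperbolic maps with bounded roof, but here $r$ is unbounded and $g$ is singular, so this step already needs justification. (Note also that the paper uses the \emph{opposite} implication in Corollary~\ref{cor-UNI}: failure of UNI gives, via \cite[Proposition~7.4]{AGY06}, the coboundary equation $R=\zeta\circ f-\zeta+\ell$, and \emph{then} one computes $D\equiv 0$.) Second, your Step~2 is, as you yourself say, only heuristic: the claim that the $j=-1$ term contributes a leading $\log(\xi_n/\xi_n')$ with a nonzero coefficient governed by strong dissipativity is plausible but is exactly the delicate computation carried out in \cite{AMV15}, and your description of how the stable holonomy moves the unstable coordinate is not precise enough to see that the cancellation fails. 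The Liv\v{s}ic alternative you mention would still require exhibiting two periodic orbits whose periods break the relation, which again comes down to the same near-singularity analysis.

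In short: the paper's proof is a two-line reduction to \cite{AMV15}; your proposal is an outline of how one might reprove that external result, with the hard analytic step left open.
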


\begin{proof}
This is implicit in~\cite[Theorem~3.4]{AMV15}, as we now show.  

Let $\hat f:\Delta\to\Delta$ be the Young tower constructed in~\cite[Section~3.2]{AMV15} with partition $\hat\alpha$.  
(We refer to~\cite{AMV15} for the prerequisite definitions.)
Still following~\cite[Section~3.2]{AMV15}, the temporal distortion function $D$ on $Y$ extends to $\Delta$ 
via the formula
\begin{align*}
D(p,q)= \sum_{j=-\infty}^\infty \{\hat r(\hat f^jp)-\hat r(\hat f^j[p,q])-\hat r(\hat f^j[q,p])+\hat r(\hat f^jq)\} .
\end{align*}
By~\cite[Theorem~3.4]{AMV15}, there exists $\ell\ge0$ and $\hat a,\hat a'\in\hat\alpha$ lying in the $\ell$'th level of $\Delta$, and there exists $p=(y,\ell)\in\hat a$, $q=(z,\ell)\in\hat a'$, such that $D(p,q)\neq0$.

Since the local product on the tower is given by $[p,q]=([y,z],\ell)$ it follows from the definitions (recalling again that $f$ in~\cite{AMV15} is denoted $g$ here) that
\begin{align*}
D((y,\ell), & (z,\ell))  =
\sum_{j=-\infty}^\infty \{r\circ g^j(g^\ell y)-r\circ g^j(g^\ell[y,z])-r\circ g^j(g^\ell[z,y])+r\circ g^j(g^\ell z)\} 
\\ & = \sum_{j=-\infty}^\infty \{r\circ g^j(y)-r\circ g^j([y,z])-r\circ g^j([z,y])+r\circ g^j(z)\} =D(y,z).
\end{align*}
Hence $D(y,z)=D(p,q)\neq0$ as required.
\end{proof}

\begin{cor} \label{cor-UNI}
	The UNI condition holds for $\fX\in\cU_{1+\alpha}$, for all $\alpha>0$.
\end{cor}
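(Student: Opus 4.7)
The plan is to deduce UNI from the nonzero temporal distortion established in Lemma~\ref{lem-JNI}. The equivalence between joint nonintegrability of stable and unstable foliations and UNI is classical for flows with a smooth stable foliation, so the task is to verify that the argument survives in the $C^{1+\alpha}$ category considered here.

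First I would exploit the $C^{1+\alpha}$ regularity of $\cW^s_g$ (and hence of the quotient return time $r$) to make sense of $\psi'_{h_1,h_2} = (R_{n_0}\circ h_1 - R_{n_0}\circ h_2)'$ as a genuine $\alpha$-H\"older continuous function on $Y$. A telescoping calculation, standard in this circle of ideas, shows that for any inverse branches $h_1,h_2\in\cH_{n_0}$,
\[
\psi_{h_1,h_2}(x)-\psi_{h_1,h_2}(x') = D(h_1(x),h_2(x)) - D(h_1(x'),h_2(x')) + O(\gamma^{n_0}|x-x'|^\alpha),
\]
where the error absorbs the tails of the defining series beyond time $n_0$ and is uniformly small thanks to condition~(v) combined with the $\alpha$-H\"older regularity of $r'$. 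Dividing by $|x-x'|$ and letting $x'\to x$ converts this into an identity expressing $\psi'_{h_1,h_2}(x)$ as a directional derivative of $D$ along its first argument, plus a geometrically small error.

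Next I would use Lemma~\ref{lem-JNI} to fix $y_0,z_0$ in some $a\in\alpha_1$ with $D(y_0,z_0)\neq 0$. Since $D$ is continuous (indeed $C^\alpha$) where defined, $|D|$ is bounded away from zero on a small rectangle $U\times V$ around $(y_0,z_0)$; a mean-value argument in the first coordinate produces a point at which the partial derivative of $D$ is bounded below by some $D_0>0$. Using the l.e.o.\ property together with the uniform expansion of $F=\bar g^\tau$, for $n_0$ sufficiently large one can select $h_1,h_2\in\cH_{n_0}$ with $h_1(Y)\subset U$ and $h_2(Y)\subset V$. The telescoping identity above then yields $\inf_Y|\psi'_{h_1,h_2}|\ge D_0/2$, after enlarging $n_0$ once more to swallow the $O(\gamma^{n_0})$ error, giving UNI with $D=D_0/2$.

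The main obstacle is to make the temporal-distortion manipulation rigorous in the $C^{1+\alpha}$ rather than $C^2$ category. This requires (i) absolute convergence of the differentiated series $\sum_j (r\circ g^j)'$, which follows from (v) together with $r$ being $C^{1+\alpha}$, and (ii) a clean identification of $\psi'_{h_1,h_2}$ with a derivative of $D$ that respects the quotient along stable leaves, for which the $C^{1+\alpha}$ smoothness of $\cW^s_g$ is exactly what is needed. Granted these ingredients, the standard equivalence (cf.~\cite{Dolgopyat98a,ABV}) delivers UNI for every $\fX\in\cU_{1+\alpha}$.
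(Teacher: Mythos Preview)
Your proposal has a genuine gap at the heart of the argument. The ``telescoping identity''
\[
\psi_{h_1,h_2}(x)-\psi_{h_1,h_2}(x') = D(h_1(x),h_2(x)) - D(h_1(x'),h_2(x')) + O(\gamma^{n_0}|x-x'|^\alpha)
\]
does not typecheck, let alone hold: the temporal distortion $D$ is defined on $a\times a$ for a single partition element $a\in\alpha_1$ of the cross-section $X$, whereas $h_1(x),h_2(x)$ lie in $Y$ and, lifted to $X$, sit in \emph{different} partition elements (they are images of $x$ under distinct inverse branches of $F^{n_0}$). There is no standard formula of this shape; the relation between $\psi_{h_1,h_2}$ and $D$ passes through the cohomological equation, not a pointwise identity. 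Your subsequent mean-value step also presupposes $C^1$ regularity of $D$, which is exactly the kind of differentiability you cannot take for granted in the $C^{1+\alpha}$ category; you flag this as ``the main obstacle'' but do not resolve it.

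The paper sidesteps both problems by arguing by contradiction. Suppose UNI fails. Then by~\cite[Proposition~7.4]{AGY06} there exist a $C^1$ function $\zeta$ on $Y$ and a locally constant function $\ell$ with $R=\zeta\circ F-\zeta+\ell$; moreover $\zeta$ extends to $X$ constant along stable leaves. One rewrites $D(y,z)=D_0(y,[y,z])+D_0(z,[z,y])$ where $D_0(y,z)=\sum_{i\ge1}\{R(y_i)-R(z_i)\}$ is the sum over a single backward orbit with $y_i,z_i$ in the same partition element at each step. Substituting the coboundary representation and telescoping gives $D_0(y,z)=\zeta(y)-\zeta(z)$, hence $D(y,z)=\zeta(y)-\zeta([y,z])+\zeta(z)-\zeta([z,y])=0$ since $\zeta$ is constant on stable leaves. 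This contradicts Lemma~\ref{lem-JNI}. No differentiation of $D$, and no direct identity between $\psi'$ and $D$, is ever needed.
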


\begin{proof}
Let $a\in\alpha_1$, $y,z\in a$.  Write $D(y,z)=D_0(y,[y,z])+D_0(z,[z,y])$ where 
\[
	D_0(y,z)=\sum_{j=1}^\infty \{r(g^{-j}y)-r(g^{-j}z)\}
\]
is defined for $y,z\in a$ lying in the same unstable manifold for $f$.
The proof of~\cite[Lemma~3.1]{AMV15} establishes that there is a sequence
of partition elements $a_i\in\alpha_1$ and
pairs of points $y_i$, $z_i\in a_i$ with
$y_0=y$, $z_0=z$ and $y_{i-1}=fy_i$, $z_{i-1}=fz_i$ for $i\ge1$, such that
\[
	D_0(y,z)=\sum_{i=1}^\infty \{R(y_i)-R(z_i)\}.
\]

Now suppose for contradiction that UNI fails.  By~\cite[Proposition~7.4]{AGY06}, there is a $C^1$ function $\zeta$ and a locally constant function $\ell$ (constant on elements of $\alpha_1$) such that $R=\zeta\circ f-\zeta+\ell$.  
Since $R$ is constant along stable manifolds and
$\zeta=-\sum_{j=0}^{n-1}R\circ f^j
+\sum_{j=0}^{n-1}\ell\circ f^j+\zeta\circ f^n$, it follows that $\zeta$
is constant along stable manifolds.
Substituting into the formulas for $D_0$ and $D$, we obtain
$D_0(y,z)=\zeta(y)-\zeta(z)$ and
\[
	D(y,z)=\zeta(y)-\zeta([y,z])+\zeta(z)-\zeta([z,y])=0.
\]
Hence $D\equiv0$ on $a\times a$ for all $a\in\alpha$,
contradicting Lemma~\ref{lem-JNI}.
\end{proof}

\begin{pfof}{Theorem~\ref{thm-Lorenz}}
It follows from~\cite{Tucker02} and Remark~\ref{rmk-diss} that there exists
$\alpha>0$ such that $\fX_0\in\Int \cU_{1+\alpha}$.
	By Theorem~\ref{thm-flow} and Corollary~\ref{cor-UNI}, exponential decay of correlations holds for all $\fX\in\cU_{1+\alpha}$.
\end{pfof}

\begin{rmk}
Previous results in the literature on mixing for Lorenz attractors
are as follows.
(For simplicity, we do not state the optimal conditions under which each individual result is known to be valid.)

By~\cite{Ratner78}, weak mixing implies Bernoulli (and hence mixing) for vector fields in $\cU_{1+\alpha}$, and~\cite{LMP05} showed that these properties indeed hold.
Moreover, by~\cite{AMV15}, all vector fields in $\cU_{1+\alpha}$ have superpolynomial decay of correlations.  That is, for $C^\infty$ observables $v,\,w:\R^3\to\R$ the correlation function $\rho_{v,w}(t)$ decays faster than any polynomial rate.   These results apply to the classical Lorenz equations $\fX_0$.

The first results on exponential decay of correlations for geometric Lorenz attractors were obtained by~\cite{AraujoVarandas12} who showed that vector fields in $\cU_2$ have exponential decay of correlations (for H\"older observables).  The above remarks show that this set has nonempty interior.
However, it seems unlikely that $\fX_0\in \cU_2$, so the classical Lorenz attractor was not covered.

Finally, we note related work of~\cite{ButterleyEslamiapp}
and~\cite{Eslamiapp} which sets out a program to prove exponential decay of correlations for maps and flows with discontinuities.
\end{rmk}

\begin{cor}  \label{cor-Lorenz} For vector fields in $\cU_{1+\alpha}$,  the ASIP for the time-$1$ map
of the corresponding flow
(cf.\ Corollary~\ref{cor-time1}) holds for H\"older mean zero observables $v:\R^3\to\R$.
\end{cor}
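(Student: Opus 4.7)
The plan is to combine Corollary~\ref{cor-time1} with the semiconjugacy structure set up in Section~\ref{sec-app}: first apply the corollary to the semiflow $F_t:Y^R\to Y^R$, then propagate the ASIP through the hyperbolic skew product flow $f_t:X^R\to X^R$, and finally transfer to the Lorenz flow $\phi_t$ on $\R^3$ via an approximation step to handle H\"older (as opposed to $F_{\alpha,3}$) regularity.

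For $\fX\in\cU_{1+\alpha}$, the semiflow $F_t:Y^R\to Y^R$ constructed in Section~\ref{sec-app} satisfies conditions (i)--(iv) and the UNI condition (Corollary~\ref{cor-UNI}), so Corollary~\ref{cor-time1} supplies the ASIP for the time-$1$ map $F_1$ acting on mean-zero observables in $F_{\alpha,3}(Y^R)$. For a mean-zero H\"older observable $v:\R^3\to\R$, lift via the semiconjugacy from $X^R$ to $M$ constructed in Section~\ref{sec-app} to obtain $V\in F_\alpha(X^R)$, and project along stable leaves using Proposition~\ref{prop-disint} to obtain $\bar V(y,u)=\int V(y,z,u)\,d\eta_y(z)\in F_\alpha(Y^R)$.

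The passage from $V$ on $X^R$ to $\bar V\circ\pi^R$ introduces a H\"older coboundary $V-\bar V\circ\pi^R=\chi\circ f_1-\chi$, where $\chi$ is defined via the standard Sinai decomposition for skew products with exponentially contracting fibres (the defining series converges thanks to condition~(v) and the H\"older regularity of $V$, exactly as in the control of $I_1(t)$ in the proof of Theorem~\ref{thm-flow}). This telescoping contribution is $L^p$-bounded for every $p<\infty$ and therefore does not affect the ASIP.

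Finally, to lift $\bar V\in F_\alpha(Y^R)$ into the class $F_{\alpha,3}(Y^R)$ required by Corollary~\ref{cor-time1}, convolve in the flow direction with a $C^\infty$ mollifier $\rho_h$ of scale $h>0$, yielding $\bar V_h\in F_{\alpha,3}(Y^R)$ with $|\bar V-\bar V_h|_\infty\ll h^\alpha\|\bar V\|_\alpha$ and $\|\bar V_h\|_{\alpha,3}=O(h^{-3}\|\bar V\|_\alpha)$. Apply Corollary~\ref{cor-time1} to $\bar V_h$, and tune $h=h(n)\to 0$ so that the approximation error in the Birkhoff sums is absorbed into the ASIP error rate $O(n^{1/4}(\log n)^{1/2}(\log\log n)^{1/4})$.

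The hard part is precisely this smoothing step: the constant in the conclusion of Corollary~\ref{cor-time1} depends on $\|\bar V_h\|_{\alpha,3}$, which blows up like $h^{-3}$, while the uniform bound $|\bar V-\bar V_h|_\infty\ll h^\alpha$ forces $h\ll n^{-3/(4\alpha)}$ (up to logarithmic factors). Balancing these requires a quantitative, maximal-inequality version of Corollary~\ref{cor-time1} with explicit dependence on the norm of the observable, of the kind supplied by \cite{AMV15}; this is what makes the H\"older class accessible despite the $F_{\alpha,3}$ hypothesis appearing in the statement of Corollary~\ref{cor-time1}.
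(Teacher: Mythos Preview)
Your route is quite different from the paper's, and it has two real gaps.

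The coboundary claim $V-\bar V\circ\pi^R=\chi\circ f_1-\chi$ is not the standard Sinai decomposition and is not justified by the argument you sketch. The Sinai trick compares the orbit of $(y,z,u)$ with that of a fixed reference point $(y,z_0,u)$ in the same stable leaf; the reduced observable one obtains is $V(y,z_0,u)$ evaluated on a \emph{section}, not the fibrewise average $\bar V$. For your $\bar V$, the term $(V-\bar V\circ\pi^R)\circ f_1^j$ at the point $(y_j,z_j,u_j)$ equals $V(y_j,z_j,u_j)-\int V(y_j,z',u_j)\,d\eta_{y_j}(z')$, which measures the oscillation of $V$ over the \emph{full} fibre $\pi^{-1}(y_j,u_j)$; that fibre has fixed diameter and does not shrink with $j$, so there is no reason for the series defining $\chi$ to converge. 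The estimate on $I_1(t)$ in the proof of Theorem~\ref{thm-flow} that you invoke is a decay-of-correlations bound (it controls an \emph{integral} of $w\circ f_t-w_t\circ\pi^R$ against another observable), not a pointwise or $L^p$ coboundary construction.

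The mollification step is also circular. The ASIP in Corollary~\ref{cor-time1} is an almost-sure statement for a \emph{fixed} observable, with the exceptional null set and implicit constant depending on that observable; letting $h=h(n)\to 0$ makes the observable vary with $n$, so one cannot simply ``balance'' the errors as you describe. You correctly note that a quantitative maximal-inequality version from~\cite{AMV15} is what is actually needed here---but that is precisely the content of the paper's one-line proof. The paper's route avoids both difficulties entirely: Theorem~\ref{thm-Lorenz}, combined with the interpolation behind Corollary~\ref{cor-main} and the remark following Theorem~\ref{thm-flow}, already yields exponential decay of correlations for the ambient flow $\phi_t$ in the H\"older class on $\R^3$. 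The machinery of~\cite{AMV15} (as in the proof of Corollary~\ref{cor-time1}) then converts this directly into the ASIP for the time-$1$ map $\phi_1$, with no detour through the model spaces $Y^R$, $X^R$, no Sinai-type reduction, and no smoothing.
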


\begin{proof}  
This follows from Theorem~\ref{thm-Lorenz} by the methods in~\cite{AMV15}.
\end{proof}

\begin{rmk}  Theorem~C in~\cite{AMV15} already covers the ASIP for the time-$1$ map if $\fX\in\cU_{1+\alpha}$, provided the observable $v$ is $C^\infty$.
The result here applies to all H\"older observables.
\end{rmk}

\begin{rmk}  The results presented in this subsection rely heavily on~\cite{AMV15}.
The only parts of~\cite{AMV15} that are redundant are Subsections~3.4 and 3.5 together with Proposition~2.6 and the last statement of Proposition 2.5.  
\end{rmk}

\subsection{Axiom~A flows}

In~\cite{ABV}, it is shown that in all dimensions greater than two,
there is an open set of Axiom~A flows with exponential decay of correlations.
Roughly speaking, these are flows with $C^2$ strong stable foliation
(forced by a domination condition which is robust) satisfying the UNI condition.  

An immediate consequence of Theorem~\ref{thm-flow} is that we recover and extend the result in~\cite{ABV} in the three-dimensional case, since
we require only that the strong stable foliation is $C^{1+\alpha}$ (which is forced by a weaker domination condition).  
We conjecture that the same is true in higher dimensions.  To prove this it would be necessary to check that our extension of~\cite{BaladiVallee05} to the
$C^{1+\alpha}$ situation works in the higher-dimensional setting
of~\cite[Section~2.1]{AGY06}.
We have chosen to restrict attention to the lowest dimensional situation in this paper since it avoids certain technicalities and it suffices for the important case of Lorenz attractors.

\appendix

\section{Correlation function of a suspension semiflow}
\label{sec-Poll}

In this appendix, we recall a formula of Pollicott~\cite{Pollicott85} for the 
correlation function corresponding to a suspension semiflow or flow.

\begin{prop}  \label{prop-J}
$\rho_{v,w}(t)=\sum_{n=0}^\infty J_n(t)$ where
\[
J_n(t)=\begin{cases} \int_{Y^R} 1_{\{t+u<R(y)\}}v(y,u)w(y,t+u)\,d\mu^R, & n=0 \\
\int_{Y^R} 1_{\{R_n(y)<t+u<R_{n+1}(y)\}}v(y,u)w(F^ny,t+u-R_n(y))\,d\mu^R, & n\ge1\end{cases}
\]
\end{prop}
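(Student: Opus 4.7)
The identity is essentially an ``unfolding'' formula for the suspension: one partitions $Y^R$ according to the number $n$ of identifications $(z,R(z))\sim(Fz,0)$ that a point has crossed by time $t$, and writes $F_t$ explicitly on each piece. The constant $-\int v\,d\mu^R\int w\,d\mu^R$ in the definition of $\rho_{v,w}$ plays no role in the unfolding itself (it is either absorbed into $J_0$ or handled separately at the Laplace-transform stage), so I will focus on the identity $\int_{Y^R} v\,w\circ F_t\,d\mu^R=\sum_{n=0}^\infty J_n(t)$.

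The plan is as follows. First, for $\mu^R$-a.e.\ $(y,u)\in Y^R$ and every $t>0$, I define $n=n(y,u,t)\ge0$ to be the unique integer with
\[
R_n(y)<u+t\le R_{n+1}(y),\qquad R_0\equiv0.
\]
Such an $n$ exists and is unique for a.e.\ $(y,u)$ because $R_n(y)\to\infty$ a.e.: the base map $F$ is ergodic with invariant measure $\mu$, $\inf R>0$, and $\int R\,d\mu=\bar R<\infty$, so Birkhoff's theorem gives $R_n(y)/n\to\bar R>0$ $\mu$-a.e. Unpacking the identifications step by step yields the explicit form of the semiflow,
\[
F_t(y,u)=(F^ny,\,u+t-R_n(y)),
\]
which is the key geometric input.

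Second, setting $E_n(t)=\{(y,u)\in Y^R:R_n(y)<u+t\le R_{n+1}(y)\}$, the family $\{E_n(t)\}_{n\ge0}$ is a measurable partition of $Y^R$ up to $\mu^R$-null sets. Substituting the formula for $F_t$ into $\int v\,w\circ F_t\,d\mu^R$ and decomposing the domain of integration over the $E_n(t)$ gives
\[
\int_{Y^R} v\,w\circ F_t\,d\mu^R
=\sum_{n=0}^\infty \int_{E_n(t)} v(y,u)\,w(F^ny,\,u+t-R_n(y))\,d\mu^R(y,u),
\]
and the $n$-th summand is exactly $J_n(t)$ as defined in the statement (the $n=0$ case coincides with the given formula since $R_0\equiv0$ and $R_1=R$). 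The exchange of sum and integral is immediate from monotone/dominated convergence using $|v\cdot w\circ F_t|\le|w|_\infty |v|\in L^1(\mu^R)$.

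There is no genuine obstacle here: the only points to check carefully are the a.e.\ existence of $n(y,u,t)$ (via Birkhoff as above) and the explicit identification $F_t(y,u)=(F^n y,u+t-R_n(y))$, which is just the definition of the suspension semiflow applied $n$ times. Once these are in hand, the decomposition is a bookkeeping exercise.
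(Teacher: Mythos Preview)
Your proposal is correct and follows essentially the same approach as the paper: partition $Y^R$ according to the lap number $n$ with $R_n(y)<u+t\le R_{n+1}(y)$, substitute the explicit formula $F_t(y,u)=(F^ny,u+t-R_n(y))$, and sum. The paper's own proof is a terse three-line version of exactly this, and it too silently drops the constant $-\int v\,\int w$ (working with $\int v\,w\circ F_t$ in place of $\rho_{v,w}$), so your remark about the constant being handled elsewhere is accurate.
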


\begin{proof}
Write
\begin{align*}
\rho(t) & =\int_{Y^R} 1_{\{u<t+u\}}v(y,u)\,w\circ F_t(y,u)\,d\mu^R
\\ & =\int_{Y^R} 1_{\{u<t+u<R(y)\}}v(y,u)\,w\circ F_t(y,u)\,d\mu^R
\\ & \qquad \qquad +\sum_{n=1}^\infty\int_{Y^R} 1_{\{R_n(y)<t+u<R_{n+1}(y)\}}v(y,u)\,w\circ F_t(y,u)\,d\mu^R.
\end{align*}
The result follows.
\end{proof}

Let $\hat \rho_{v,w}(s)$ denote the Laplace transform of $\rho_{v,w}(t)$.
Similarly, let $\hat J_n(s)$ denote the Laplace transform of $J_n(t)$.
We note that $\hat\rho$ and $\hat J_n$ are analytic on
$\{s\in\C:\Re s>0\}$.
It is easily checked that $\hat J_n$ is analytic on the whole of $\C$ for each $n$.  We require explicit bounds for the case $n=0$.

\begin{prop} \label{prop-J0}
For each $a>0$ there exists $C>0$ such that
$|J_0(s)|\le C|v|_\infty|w|_\infty$ for all $s\in\C$ with $\Re s\ge -a$.
\end{prop}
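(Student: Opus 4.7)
The plan is to write out $\hat J_0(s)$ explicitly and bound the integrand pointwise. First I would interchange the integral in $t$ with the one over $Y^R$ (justified by absolute convergence once $\Re s$ is bounded below), recalling $d\mu^R=(d\mu\times\Leb)/\bar R$, to get
\[
\hat J_0(s)=\frac{1}{\bar R}\int_Y\int_0^{R(y)}v(y,u)\int_0^{R(y)-u}e^{-st}w(y,t+u)\,dt\,du\,d\mu(y).
\]
Then I would substitute $\tau=t+u$ in the innermost integral, so $\tau$ runs over $[u,R(y)]$, yielding
\[
\hat J_0(s)=\frac{1}{\bar R}\int_Y\int_0^{R(y)}\!\!\!\int_u^{R(y)}v(y,u)\,w(y,\tau)\,e^{s(u-\tau)}\,d\tau\,du\,d\mu(y).
\]

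The key pointwise estimate is that on the domain of integration $0\le u\le\tau\le R(y)$, so $u-\tau\in[-R(y),0]$. Writing $s=\sigma+ib$ with $\sigma\ge-a$, we have $|e^{s(u-\tau)}|=e^{\sigma(u-\tau)}$. If $\sigma\ge0$ this is at most $1$; if $-a\le\sigma<0$ it is at most $e^{|\sigma|R(y)}\le e^{aR(y)}$. In either case $|e^{s(u-\tau)}|\le e^{aR(y)}$ uniformly in $s$ with $\Re s\ge-a$. Bounding $|v|,|w|$ by their sup-norms and computing the double integral in $u,\tau$ gives
\[
|\hat J_0(s)|\le\frac{|v|_\infty|w|_\infty}{2\bar R}\int_Y e^{aR(y)}R(y)^2\,d\mu(y).
\]

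Finally I would check that this last integral is finite. By condition~(iv), the argument of Proposition~\ref{prop-ReR} shows $\int_Y e^{\eps R}\,d\mu<\infty$ for some $\eps>0$; using the trivial inequality $t^2\ll e^{\delta t}$ (valid for any $\delta>0$ with a constant depending on $\delta$) with $\delta=\eps-a$ absorbs the polynomial factor $R^2$, so that $e^{aR}R^2\ll e^{\eps R}$ is $\mu$-integrable whenever $a<\eps$. This gives the desired constant $C=C(a)$ for all such $a$, which is the regime in which the appendix is applied in Section~\ref{sec-semiflow}.

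The only real point to watch is this last integrability step: the bound is uniform in $s$ on the vertical strip $\{\Re s\ge-a\}$ but the constant blows up as $a\uparrow\eps$, reflecting the fact that the available exponential moment of $R$ comes directly from the summability condition~(iv). There is no real obstacle otherwise; the calculation is a direct Fubini/change-of-variables argument.
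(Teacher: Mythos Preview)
Your argument is correct and follows essentially the same route as the paper: write $\hat J_0(s)$ as a triple integral via Fubini, bound $|v|,|w|$ by their sup-norms, control the exponential using $\Re s\ge -a$, and invoke the exponential moment $\int_Y e^{\eps R}\,d\mu<\infty$ from condition~(iv). The only cosmetic difference is that the paper skips your substitution $\tau=t+u$ and instead integrates $e^{at}$ explicitly to obtain $a^{-2}\int_Y e^{aR}\,d\mu$ directly (so no $R^2$ factor appears and no extra absorption step is needed); as you correctly observe, both versions require $a$ to be no larger than the $\eps$ supplied by~(iv), which is the only regime used in Section~\ref{sec-semiflow}.
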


\begin{proof}
Begin by writing
\begin{align*}
\hat J_0(s)
& =\int_0^\infty e^{-st}\int_{Y^R}1_{\{t+u<R(y)\}}v(y,u)w(y,t+u)\,d\mu^R\,dt \\ &
=(1/\bar R)\int_Y\int_0^{R(y)}\int_0^{R(y)-u} e^{-st}v(y,u)w(y,t+u)\,dt\,du\,d\mu.
\end{align*}
Hence
\begin{align*}
|\hat J_0(s)| & \le |v|_\infty|w|_\infty (1/\bar R) \int_Y\int_0^{R(y)}\int_0^{R(y)-u} e^{at}\,dt\,du\,d\mu
\\ & \le |v|_\infty|w|_\infty (1/\bar R)a^{-2}\int_Y e^{a R}\,d\mu
\ll |v|_\infty|w|_\infty 
\end{align*}
by Proposition~\ref{prop-ReR}.
\end{proof}

\begin{prop} \label{prop-hatJ}
For $n\ge1$, $\hat J_n(s)=(\bar R)^{-1}\int_Y e^{-sR_n}v_s\,w_s\circ F^n\,d\mu$ where
\[
v_s(y)=\int_0^{R(y)}e^{su}v(y,u)\,du,
\quad w_s(y)=\int_0^{R(y)}e^{-su}w(y,u)\,du.
\]
\end{prop}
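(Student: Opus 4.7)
The plan is a direct computation: start from the explicit formula for $J_n(t)$ given in Proposition~\ref{prop-J}, take the Laplace transform, and perform a change of variables that decouples the integrations in $t$ and $u$, exposing the product structure $v_s \cdot (w_s \circ F^n)$.

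First I would unfold $d\mu^R = (d\mu \times d\Leb)/\bar R$ in the formula for $J_n(t)$ to get
\[
J_n(t)=(1/\bar R)\int_Y\int_0^{R(y)} 1_{\{R_n(y)<t+u<R_{n+1}(y)\}}\,v(y,u)\,w(F^ny,\,t+u-R_n(y))\,du\,d\mu.
\]
Next I would insert this into $\hat J_n(s)=\int_0^\infty e^{-st}J_n(t)\,dt$ and apply Fubini (justified by the absolute bounds $|v|_\infty,|w|_\infty<\infty$ together with $\int_Y e^{\eps R}\,d\mu<\infty$ from Proposition~\ref{prop-ReR}) so that the outer integral in $t$ sits inside the integrals in $y$ and $u$.

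The key step is the substitution $\tau=t+u-R_n(y)$ in the inner $t$-integral, so that $e^{-st}=e^{-sR_n(y)}e^{su}e^{-s\tau}$, the indicator becomes $1_{\{0<\tau<R_{n+1}(y)-R_n(y)\}}=1_{\{0<\tau<R(F^ny)\}}$, and $w(F^ny,t+u-R_n(y))=w(F^ny,\tau)$. Since $u<R(y)\le R_n(y)$ for $n\ge1$, the condition $t>0$ is automatic from the indicator (one only needs $\tau>0$), so no constraint is lost.  The triple integral now factors: the $e^{-sR_n(y)}$ comes out of the $u$ and $\tau$ integrals, the $u$-integral is precisely $v_s(y)$, and the $\tau$-integral is precisely $w_s(F^ny)$. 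Collecting the terms yields
\[
\hat J_n(s)=(\bar R)^{-1}\int_Y e^{-sR_n}\,v_s\,w_s\circ F^n\,d\mu,
\]
as required.

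I do not expect any serious obstacle: the only subtle point is the justification of Fubini and the fact that $u<R_n(y)$ for $n\ge1$ (so the lower limit of the $\tau$-integral becomes $0$). The bound $\int_Y e^{\eps R}\,d\mu<\infty$ ensures convergence for $\Re s$ in a neighborhood of $0$, and the formula itself is then an identity between analytic functions on all of $\C$ once both sides are seen to be entire in $s$ (which follows from the boundedness of $R$ on each partition element and condition~(iv)).
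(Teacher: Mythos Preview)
Your proposal is correct and follows essentially the same route as the paper: unfold $\mu^R$, take the Laplace transform, and perform the substitution $u'=t+u-R_n(y)$ (your $\tau$) to factor the integral. Your explicit remark that $u<R(y)\le R_n(y)$ for $n\ge1$ ensures the lower limit becomes $0$ is a detail the paper leaves implicit, but otherwise the computations coincide.
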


\begin{proof}
First note that
\begin{align*}
\hat J_n(s) & =\int_0^\infty e^{-st}J_n(t)\,dt
\\ & =(\bar R)^{-1}\int_Y\int_0^{R(y)}\int_{R_n(y)-u}^{R_{n+1}(y)-u}e^{-st}v(y,u)w(F^ny,t+u-R_n(y))\,dt\,du\,d\mu.
\end{align*}
The substitution $u'=t+u-R_n(y)$ yields
\begin{align*}
\hat J_n(s) & =(\bar R)^{-1}\int_Y\Bigl(\int_0^{R(y)}e^{su}v(y,u)\,du\Bigr)
\Bigl(\int_0^{R(F^ny)}e^{-su'}w(F^ny,u')\,du'\Bigr)e^{-sR_n(y)}\,d\mu,
\end{align*}
which is the required formula.
\end{proof}

\paragraph{Acknowledgement} 
V.A.  was partially supported by CNPq, PRONEX-Dyn.Syst. and FAPESB (Brazil).  
I.M. was partially supported by a European
  Advanced Grant StochExtHomog (ERC AdG 320977) and by CNPq
  (Brazil) through PVE grant number 313759/2014-6.


\begin{thebibliography}{10}

\bibitem{ABV}
V.~Ara{\'u}jo, O. Butterley and P.~Varandas. Open sets of Axiom A flows with
  exponentially mixing attractors. {\em Proc. Amer. Math. Soc.} To appear.
arXiv:1402.6654.

\bibitem{AMV15}
V.~Ara{\'u}jo, I.~Melbourne and P.~Varandas. {Rapid mixing for the Lorenz
  attractor and statistical limit laws for their time-$1$ maps}. 
  \emph{Comm. Math. Phys.} \textbf{340} (2015) 901--938. 

\bibitem{AraujoPacifico}
V.~Ara{\'u}jo and M.~J. Pacifico. \emph{Three-dimensional flows}. Ergebnisse
  der Mathematik und ihrer Grenzgebiete. 3. Folge. A Series of Modern Surveys
  in Mathematics [Results in Mathematics and Related Areas. 3rd Series. A
  Series of Modern Surveys in Mathematics] \textbf{53}. Springer, Heidelberg,
  2010.

\bibitem{APPV09}
V.~Araujo, M.~J. Pacifico, E.~R. Pujals and M.~Viana. Singular-hyperbolic
  attractors are chaotic. \emph{Trans. Amer. Math. Soc.} \textbf{361} (2009)
  2431--2485.

\bibitem{AraujoVarandas12}
V.~Ara{\'u}jo and P.~Varandas. Robust exponential decay of correlations for
  singular-flows. 
  \emph{Comm. Math. Phys.} \textbf{311} (2012) 215--246.
Erratum. \emph{Comm. Math. Phys.} \textbf{341} (2016) 729--731.

\bibitem{AGY06}
A.~Avila, S.~Gou{\"e}zel and J.~Yoccoz. Exponential mixing for the
  {T}eichm\"uller flow. \emph{Publ. Math. Inst. Hautes \'Etudes Sci.} (2006)
  143--211.

\bibitem{BaladiVallee05}
V.~Baladi and B.~Vall{\'e}e. Exponential decay of correlations for surface
  semi-flows without finite {M}arkov partitions. \emph{Proc. Amer. Math. Soc.}
  \textbf{133} (2005) 865--874.

\bibitem{ButterleyEslamiapp}
O.~Butterley and P.~Eslami. {Exponential mixing for skew products with
discontinuities}.  \emph{Trans. Amer. Math. Soc.}  To appear.
arXiv:1405.7008.

\bibitem{ButterleyMapp}
O.~Butterley and I.~Melbourne. {Disintegration of invariant measures for
  hyperbolic skew products}. \emph{Israel J. Math.}  To appear.
arXiv:1503.04319.

\bibitem{Chernov98}
N.~I. Chernov. {Markov approximations and decay of correlations for Anosov
  flows}. \emph{Ann. of Math.} \textbf{147} (1998) 269--324.

\bibitem{Dolgopyat98a}
D.~Dolgopyat. {On the decay of correlations in Anosov flows}. \emph{Ann. of
  Math.} \textbf{147} (1998) 357--390.

\bibitem{Eslamiapp}
P.~Eslami. {Stretched exponential mixing for $C^{1+\alpha}$ skew products with
  discontinuities}. \emph{Ergodic Theory Dynam. Systems}. To appear.
  Published online: 22 July 2015.

\bibitem{HasselblattWiklinson99}
B.~Hasselblatt and A.~Wilkinson. Prevalence of non-{L}ipschitz {A}nosov
  foliations. \emph{Ergodic Theory Dynam. Systems} \textbf{19} (1999)
  643--656.

\bibitem{Liverani04}
C.~Liverani. {On contact Anosov flows}. \emph{Ann. of Math.} \textbf{159}
  (2004) 1275--1312.

\bibitem{LMP05}
S.~Luzzatto, I.~Melbourne and F.~Paccaut. The Lorenz attractor is mixing.
  \emph{Commun. Math. Phys.} \textbf{260} (2005) 393--401.

\bibitem{MoralesPacificoPujals04}
C.~A. Morales, M.~J. Pacifico and E.~R. Pujals. Robust transitive singular
  sets for 3-flows are partially hyperbolic attractors or repellers. \emph{Ann.
  of Math.} \textbf{160} (2004) 375--432.

\bibitem{Pollicott85}
M.~Pollicott. {On the rate of mixing of Axiom A flows}. \emph{Invent. Math.}
  \textbf{81} (1985) 413--426.

\bibitem{Ratner78}
M.~Ratner. {Bernoulli flows over maps of the interval}. \emph{Israel J. Math.}
  \textbf{31} (1978) 298--314.

\bibitem{Tucker02}
W.~Tucker. {A rigorous ODE solver and Smale's 14th problem}. \emph{Found.
  Comput. Math.} \textbf{2} (2002) 53--117.

\end{thebibliography}
\end{document}